\renewcommand{\div}{\operatorname{div}}
\newcommand{\Rr}{{\mathbb{R}}}
\newcommand{\Nn}{{\mathbb{N}}}
\newcommand{\Tt}{{\mathbb{T}}}
\newcommand{\epsi}{\varepsilon}
\def\d{{\rm d}}
\def\dx{{\rm d}x}
\def\dt{{\rm d}t}
\def\leq{\leqslant}
\def\geq{\geqslant}
\numberwithin{equation}{section}
\newtheoremstyle{thmlemcorr}{10pt}{10pt}{\itshape}{}{\bfseries}{.}{10pt}{{\thmname{#1}\thmnumber{
                        #2}\thmnote{ (#3)}}}
\newtheoremstyle{thmlemcorr*}{10pt}{10pt}{\itshape}{}{\bfseries}{.}\newline{{\thmname{#1}\thmnumber{
\newtheoremstyle{defi}{10pt}{10pt}{\itshape}{}{\bfseries}{.}{10pt}{{\thmname{#1}\thmnumber{
                        #2}\thmnote{ (#3)}}}
\newtheoremstyle{remexample}{10pt}{10pt}{}{}{\bfseries}{.}{10pt}{{\thmname{#1}\thmnumber{
                        #2}\thmnote{ (#3)}}}
\newtheoremstyle{ass}{10pt}{10pt}{}{}{\bfseries}{.}{10pt}{{\thmname{#1}\thmnumber{
                        A#2}\thmnote{ (#3)}}}
\theoremstyle{thmlemcorr}
\newtheorem{theorem}{Theorem}
\numberwithin{theorem}{section}
\newtheorem{proposition}[theorem]{Proposition}
\theoremstyle{thmlemcorr*}
\newtheorem{theorem*}{Theorem}
\newtheorem{lemma*}[theorem]{Lemma}
\newtheorem{corollary*}[theorem]{Corollary}
\newtheorem{proposition*}[theorem]{Proposition}
\newtheorem{problem*}[theorem]{Problem}
\newtheorem{conjecture*}[theorem]{Conjecture}
\theoremstyle{defi}
\newtheorem{definition}[theorem]{Definition}
\newtheorem{hyp}{Assumption}
\newtheorem{problem}{Problem}
\theoremstyle{remexample}
\newtheorem{remark}[theorem]{Remark}
\newtheorem{example}[theorem]{Example}
\newtheorem{teo}[theorem]{Theorem}
\newtheorem{lem}[theorem]{Lemma}
\newtheorem{pro}[theorem]{Proposition}
\newtheorem{cor}[theorem]{Corollary}
\theoremstyle{ass}
\begin{document}
        
        \title[ Mean-Field Planning Problem]{A Potential  Approach for  Planning Mean-Field Games\\ in One Dimension}
        
\author{Tigran Bakaryan}
\address[T. Bakaryan]{
        King Abdullah University of Science and Technology (KAUST),
        CEMSE Division, Thuwal 23955-6900, Saudi Arabia.}
\email{tigran.bakaryan@kaust.edu.sa}
\author{Rita Ferreira}
\address[R. Ferreira]{
        King Abdullah University of Science and Technology (KAUST),
        CEMSE Division, Thuwal 23955-6900, Saudi Arabia.}
\email{rita.ferreira@kaust.edu.sa}
\author{Diogo Gomes}
\address[D. Gomes]{
        King Abdullah University of Science and Technology (KAUST),
        CEMSE Division, Thuwal 23955-6900, Saudi Arabia.}
\email{diogo.gomes@kaust.edu.sa}

        \keywords{Mean Field Game; Potential Approach, Variational Approach, Planning Problem}
        \subjclass[2010]{
                35J47, 
                35A01, 
                35J50} 
        
        \thanks{
                D. Gomes was partially supported by KAUST baseline and start-up funds and 
                KAUST SRI, Uncertainty Quantification Center in Computational Science and Engineering.  
                JD was partially supported by himself. 
        }
        \date{\today}

        \begin{abstract}
This manuscript discusses planning problems for first- and second-order one-dimensional mean-field games (MFGs). These games are comprised of a Hamilton--Jacobi equation coupled with  a Fokker--Planck equation. Applying  Poincar{\'e's} Lemma to the Fokker--Planck equation, we deduce the existence of a potential. Rewriting the  Hamilton--Jacobi equation in terms of the potential, we obtain a system of  Euler--Lagrange equations for certain  variational problems. Instead of the mean-field planning problem (MFP), we study this variational problem. By the direct method in the calculus of variations, we prove the existence and uniqueness of solutions to the variational problem. The variational approach has the advantage of eliminating  the continuity equation. 

We also consider a first-order MFP with congestion. We prove that the congestion problem has a weak solution by introducing a potential and relying on the theory of variational inequalities.  We end the paper by presenting an application to the one-dimensional Hughes' model.
        \end{abstract}
        
        \maketitle
        
        \section{Introduction}
        Mean-field games (MFGs) describe the interaction between a large number of identical rational agents. More precisely, every agent minimizes the same value function. MFGs first appeared in  \cite{ll1,ll2,ll3}  and, independently, in  \cite{huang2006large}. An important problem in the theory of MFGs is the mean-field planning (MFP) problem introduced in \cite{cursolionsplanning}, where  initial and final distributions for the population are predetermined, but agents are free to choose their strategies at intermediate
times. This problem is modeled by a  system of PDEs, comprised
by  a Hamilton--Jacobi equation and a Fokker--Planck equation,
and endowed with initial-terminal boundary conditions for the density of the agents. A typical example is
        \begin{equation}
        \label{MFGs}
        \begin{cases} 
        -u_t-\varepsilon \Delta u+H(x,Du)=g(m)\quad &\text{in}\,\ (0,T)\times \Tt^d  \\ m_t-\varepsilon \Delta m-\div (mH_p(x,Du))=0 \quad &\text{in}\,\ (0,T)\times \Tt^d \\
        m(0,x)=m_0(x), \quad m(T,x)=m_T(x) &\text{in}\,\ \Tt^d,
        \end{cases} 
        \end{equation}
        where $m$ is a probability density  and represents the agents' distribution in the space, and $u$ is the value function.
Note that because no boundary values are prescribed for \(u\), the unknown \(u\) can only be determined up to additive constants.

        The existence of classical solutions for the planning problem was considered in \cite{cursolionsplanning}, where the authors establish  the existence and uniqueness of classical solutions in the case in which the Hamiltonian is quadratic, $H(x, p)=\frac{|p|^2}{2}$, and  $g=g(m)$  is an increasing function.  A priori estimates for classical solutions of the planning problem were discussed in \cite{gomes2018displacement,BakaryanFerreiraGomes2020}, while the   existence and uniqueness of weak solutions for a wide range of Hamiltonians were addressed in  \cite{porretta,porretta2,OrPoSa2018,Tono2019}. In \cite{CDY} (see also \cite{achdou2013finite}),  the authors developed an efficient numerical method for the mean-field planning problem using
a variational approach. Here, we investigate an alternative variational approach for the planning problem. 
As we will see, our technique relies on introducing a potential that
integrates the second equation  in \eqref{MFGs}, reducing the complexity of the problem.

        The MFG in \eqref{MFGs} can be derived from an optimal control problem;  as shown in  \cite{ll2}, the first equation in \eqref{MFGs}  is the Euler--Lagrange equation for the following optimal control problem
        \begin{equation*}
        \inf\limits_{v\in \mathcal{A}} \int_{0}^{T}\!\!\!\int_{\Tt^d}^{} \big( L(x,v)m+G(m)\big) \,\dx\dt
        \end{equation*}
        subject to
        \begin{equation}\label{constraint}
        \begin{cases} 
        m_t-\varepsilon \Delta m-\div (mv)=0 &\quad \text{in}\,\ (0,T)\times \Tt^d\\
        m(t,x)\geq 0& \quad \text{in}\,\ (0,T)\times \Tt^d\\
        m(0,x)=m_0(x)& \quad\text{in}\,\ \Tt^d\\
        m(T,x)=m_T(x)& \quad\text{in}\,\ \Tt^d,
        \end{cases} 
        \end{equation}
        where $\mathcal{A}$ is a set of smooth enough vector fields, $v:[0,T]\times \Tt^d\to\Rr^d$.
        Here, the Lagrangian $L:\Tt^d\times \Rr^d\to\Rr$ is the Fenchel
        conjugate of the Hamiltonian $H$ with respect to the second variable and \(G'=g\). This variational  formulation was used in \cite{CDY} to develop a numerical method for the mean-field planning problem. In this setting, the  constraints in \eqref{constraint} contain the continuity equation (the first equation in \eqref{constraint}). Another variational approach for the mean-field planning problem was considered in \cite{OrPoSa2018,Tono2019}, where the authors define  weak solutions to problem \eqref{MFGs} through the solutions to a dual variational problem. The set of constraints of that dual problem  also contains the continuity equation (for more details, see \cite{OrPoSa2018} or  \cite{Tono2019}).
        Thus, in both cases, the set of constraints involves the continuity equation. In this paper, for the one-dimensional case in space, we introduce an alternative  variational approach for the mean-field planning problem whose set of constraints does not contain the continuity equation. As we mentioned before,
one of the main advantages of the absence of the continuity equation constraint   that it may enable more straightforward numerical approaches.  
       
      In the first part of this paper, we consider the following first- and  second-order one-dimensional mean-field planning problems.
        \begin{problem} \label{planningSG} Suppose that $m_0$, $m_T\in \mathcal{P(\Tt)} \cap C^2(\Tt)$, \( H \in C^2(\Rr)\), \( V \in C^1(\Tt)\),  and $g\in C^1(\Rr_0^+)$. Assume further that $H$ is strictly convex, $g$ is  non-decreasing,  and $\lambda=0$ or $\lambda=1$. Find  $(u,m)\in C^2([0,T]\times \Tt)\times C^2([0,T]\times \Tt)$ satisfying  $m\geq 0$,
                \begin{equation}\label{planningSGS}
                \begin{cases} 
                -u_t-\lambda u_{xx}+H(u_x)+V(x)=g(m) &  \\ m_t-\lambda m_{xx}- (H^\prime(u_x) m)_x=0 & 
                \end{cases} \text{in}\,\ (0,T)\times \Tt,
                \end{equation}
                and
                \begin{equation}
                \label{boundaryP}
                \begin{cases} 
                m(0,x)=m_0(x) &  \\ m(T,x)=m_T(x)& 
                \end{cases} \text{in}\,\ \Tt.
                \end{equation} 
        \end{problem}  
        The first-order mean-field planning problem corresponds to  $\lambda =0$, while  the second-order one corresponds  to $\lambda=1$. 
        
        In this study, we address  Problem \ref{planningSG} by deriving new associated variational problems, both for \(\lambda=0\) and \(\lambda=1\). Then, under suitable conditions on the data of Problem \ref{planningSG}, we prove the existence and uniqueness of  solutions to those  variational problems. In turn, if the solutions to the variational
problems are regular enough, we recover solutions to Problem \ref{planningSG}.
        
        %
        To present our approach, let $L:\Rr\to\Rr$ be the  Legendre transform of $H$,
        \begin{equation*}
L(w)=\sup\limits_{p\in\Rr}\big(pw-H(p)\big),\quad w\in\Rr,
        \end{equation*}
and let  $G:\Rr\to\Rr$ and $L_0:\Rr\times\Rr_0^+\to\Rr$ be such that
        \begin{equation}\label{G-def}
        G^\prime(z)=g(z), \quad z\in\Rr^+_0,
        \end{equation}
        and  
        \begin{equation}
        \label{defLTilde}
        L_0(z,y)=
        \begin{cases} 
        L\big(  \tfrac{z}{y}\big) y,  &(z,y)\in\Rr\times\Rr^+,   \\ +\infty, \,\, &z\neq 0,\, y=0, 
        \\ 0, \,\, &z=0,\,y= 0.
        \end{cases}
        \end{equation}
Because  \( H \in C^2(\Rr)\) and it is strictly convex,  \(L\) is a  convex function with 
        \begin{equation}\label{L}
 L^\prime=\left( H^\prime\right)^{-1}.
        \end{equation}
     
        As we detail in Section~\ref{sec1}, the next problem outlines the variational problem associated with our mean-field planning problem, Problem \ref{planningSG}.  
        \begin{problem}\label{problem}Consider the setting of Problem \ref{planningSG}.  Let $p>1$, 
                \begin{equation}\label{B00}
        \begin{aligned}
        \mathcal{B}_{0}^{p}=\bigg\{&\varphi\in W^{1,p}([0,T]\times\Tt)\!:\varphi_x+1\geq 0, \,\int_{\Tt}\varphi(t,x)\,\dx=0, \\  
&\qquad  \varphi(0,x)=\int_{0}^{x}\big(m_0(\tau)-1\big)\,\d \tau - \int_{\Tt}\int_{0}^{x}(m_0(\tau)-1)\,\d \tau\dx,\\ 
&\qquad \varphi(T,x)=\int_{0}^{x}\big(m_T(\tau)-1\big)\,\d \tau - \int_{\Tt}\int_{0}^{x}(m_T(\tau)-1)\,\d \tau\dx \bigg\}\times L^p([0,T]),
        \end{aligned}
        \end{equation}  
        and 
                \begin{equation}\label{B001}
        \begin{split}
        \mathcal{B}_{1}^{p}=\big \{(\varphi,q)\in\mathcal{B}_{0}^{p}\!:\varphi_{xx} \in L^p([0,T]\times\Tt)\big\}.
        \end{split}
        \end{equation}
                 Find  $(\varphi,q)\in \mathcal{B}_{\lambda}^{p}$ that minimizes the  functional 
                \begin{equation}\label{prof}
                \int_{0}^{T}\!\!\!\int_{\mathbb{T}}\big (L_0(\varphi_t+q-\lambda\varphi_{xx},\varphi_x+1)-V \varphi_x+G( \varphi_x+1)\big)\,\dx\dt
                \end{equation}
                over \(W^{1,p}([0,T]\times\Tt)\times L^p([0,T]) \), where $G$ and \(L_0\) are given by  \eqref{G-def}--\eqref{defLTilde}.
                \end{problem}
         Problem~\ref{problem} encodes  two different  variational problems according to the value of \(\lambda\), \(\lambda=0\) or \(\lambda=1\). When $\lambda=0$, the variational problem  in Problem~\ref{problem} is associated with the first-order mean-field planning problem corresponding to  $\lambda=0$ in Problem~\ref{planningSG}; when \(\lambda=1\), the variational problem  in Problem~\ref{problem} is  associated with the second-order mean-field planning problem corresponding to $\lambda=1$ in Problem~\ref{planningSG}.  In Section~\ref{sec1}, we  present a formal derivation of  Problem~\ref{problem}  and describe the  variational problem corresponding to the first-order MFP in higher dimensions, $d>1$.
         
The next proposition states sufficient conditions under which solutions to  Problem~\ref{problem} provide solutions to Problem~\ref{planningSG}.
                \begin{pro}\label{prorel} Let \(\lambda\in \{0,1\} \). Suppose that $(\varphi,q)\in C^{\lambda+2}([0,T]\times \Tt)\times C^{\lambda+1}([0,T])$ solves Problem~\ref{problem}. Assume further that $\varphi_x+1>0$ and, for \((x,t)\in [0,T]\times \Tt\), set
        \begin{equation}\label{proreleq}
        \begin{cases}
        m(t,x)=\varphi_x(t,x)+1   \\  u(t,x)=\int_{0}^{x}L^\prime\big(\frac{ \varphi_t(t,\tau)+q(t)-\lambda \varphi_{xx}(t,\tau)}{\varphi_x(t,\tau)+1}\big)\,\d \tau .
        \end{cases}
        \end{equation}
         Then, there exits a function, \(\vartheta:[0,T]\to\Rr\), only depending on \(t\), such that $(\tilde u, m)=(u + \vartheta,m)$ solves Problem~\ref{planningSG}.
        \end{pro}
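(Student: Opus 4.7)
The plan is to derive the Euler--Lagrange system for Problem~\ref{problem}, which produces two conditions via the variations in \(\varphi\) and \(q\), and to show that under the substitution \eqref{proreleq} these become (i) the Hamilton--Jacobi equation in \eqref{planningSGS} modulo an additive function of \(t\) that is absorbed into \(\vartheta\), and (ii) a periodicity condition on \(u\). The continuity equation in \eqref{planningSGS}, together with the initial-terminal data on \(m\), will then follow tautologically from the definitions and from the boundary conditions built into \(\mathcal{B}_\lambda^p\).

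The computation rests on the identities
\[
\partial_z L_0(z,y) = L^\prime(z/y), \qquad \partial_y L_0(z,y) = -H\big(L^\prime(z/y)\big),
\]
which follow from \eqref{defLTilde} and the Legendre duality \(L(w) = wL^\prime(w) - H(L^\prime(w))\). Writing \(z = \varphi_t + q - \lambda\varphi_{xx}\), \(y = \varphi_x + 1\), and testing the variation of the functional \eqref{prof} in \(\varphi\) against admissible \(\eta\) (smooth, \(\Tt\)-periodic, vanishing at \(t = 0\) and \(t = T\), and with \(\int_{\Tt}\eta(t,\cdot)\,\dx = 0\) for every \(t\)), integration by parts together with the identification \(u_x = L^\prime(z/y)\) gives
\[
\int_0^T\!\!\int_{\Tt}\big(-u_t - \lambda u_{xx} + H(u_x) + V - g(m)\big)_x\,\eta\,\dx\,\dt = 0.
\]
Because the integrand is the \(x\)-derivative of a \(\Tt\)-periodic function, varying \(\eta\) over the mean-zero class forces this derivative to vanish, so \(-u_t - \lambda u_{xx} + H(u_x) + V - g(m) = \psi(t)\) for some \(\psi\). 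Setting \(\vartheta(t) = \int_0^t \psi(s)\,\d s\) and \(\tilde u = u + \vartheta\) yields the Hamilton--Jacobi equation. The variation in \(q\) against arbitrary \(\xi\in L^p([0,T])\) produces \(\int_{\Tt}L^\prime(z/y)\,\dx = 0\), i.e., \(\int_{\Tt}u_x\,\dx = 0\), which is precisely the compatibility condition that makes \(u\) defined by \eqref{proreleq} single-valued on \(\Tt\) and, given the \(C^{\lambda+2}\) regularity of \(\varphi\) and \(C^{\lambda+1}\) regularity of \(q\), of class \(C^{\lambda+2}\).

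The Fokker--Planck equation is then satisfied as an identity---the central payoff of introducing the potential. Since \(L^\prime = (H^\prime)^{-1}\), one has \(H^\prime(u_x) = z/y\), hence \(H^\prime(u_x)\,m = z = \varphi_t + q - \lambda\varphi_{xx}\). Combining with \(m_t = \varphi_{tx}\) and \(m_{xx} = \varphi_{xxx}\) gives \(m_t - \lambda m_{xx} - (H^\prime(u_x)\,m)_x = 0\) directly. Differentiating in \(x\) the boundary conditions that define \(\mathcal{B}_0^p\) yields \(\varphi_x(0,x) = m_0(x) - 1\) and \(\varphi_x(T,x) = m_T(x) - 1\), so \(m(0,\cdot) = m_0\) and \(m(T,\cdot) = m_T\). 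Positivity of \(m\) is inherited from the hypothesis \(\varphi_x + 1 > 0\).

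The most delicate point---and the reason the statement permits the additive correction \(\vartheta\)---is the handling of admissible variations in \(\varphi\). The functional enjoys the gauge symmetry \((\varphi,q)\mapsto(\varphi + f(t),\,q - f^\prime(t))\), so the mean-zero normalization in \(\mathcal{B}_\lambda^p\) plays the role of gauge fixing; accordingly, the EL equation only recovers the Hamilton--Jacobi equation modulo a function of \(t\), which is exactly what \(\vartheta\) absorbs. The standing hypothesis \(\varphi_x + 1 > 0\) makes the non-negativity constraint \(\varphi_x + 1 \geq 0\) inactive at the critical point, so no Lagrange multiplier arises from it, and the \(C^{\lambda+2}\) regularity assumed in the statement is precisely what is required to justify the integrations by parts and to guarantee that \(u\) enjoys the regularity demanded by Problem~\ref{planningSG}.
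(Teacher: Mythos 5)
Your proof is correct and follows essentially the same route as the paper: the continuity equation is automatic from the potential substitution, the Euler--Lagrange system for the functional in $\varphi$ yields the Hamilton--Jacobi equation only up to an additive function of $t$ (absorbed by $\vartheta$), and the variation in $q$ enforces $\int_\Tt u_x\,\dx=0$. The only cosmetic difference is that you derive the Euler--Lagrange equations inline (and record the useful dual identities $\partial_z L_0=L'(z/y)$, $\partial_y L_0=-H(L'(z/y))$), whereas the paper invokes its Proposition~\ref{El1} for that step.
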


        In Section~\ref{assumptions}, we state the assumptions used in our analysis.  In Section~\ref{preli}, we introduce additional notation and prove some preliminary results. With these results, we establish the following theorem.
        \begin{teo}\label{exist2} Suppose that Assumptions~\ref{assumtionOnG2}--\ref{assumtionOnH2}
hold  for some \(\beta>1\) and \(\gamma>1\) (see Section~\ref{assumptions}).
Then, Problem~\ref{problem} has a unique solution; more precisely, the variational problems corresponding to  $\lambda=0$ and $\lambda=1$ in Problem~\ref{problem} have unique solutions in  $ \mathcal{B}_0^ \frac{ \beta\gamma}{\beta+\gamma-1}$ and  $\mathcal{B}_1^ \frac{ \beta\gamma}{\beta+\gamma-1}$, respectively. 
        \end{teo}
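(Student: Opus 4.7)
The plan is to apply the direct method of the calculus of variations. Three structural observations drive the argument. First, the integrand in \eqref{prof} is jointly convex in $(\varphi_t,\varphi_x,\varphi_{xx},q)$: the perspective function $L_0(z,y)=yL(z/y)$ is jointly convex on $\Rr\times\Rr_0^+$ because $L$ is convex (as the Legendre transform of the strictly convex $H$), with the convention in \eqref{defLTilde} making it a proper lower-semicontinuous convex function; $G$ is convex because $g=G'$ is non-decreasing; and $-V\varphi_x$ is linear. Second, $\mathcal{B}_\lambda^p$ is convex and sequentially weakly closed: the pointwise constraint $\varphi_x+1\geq 0$, the zero-mean condition on $\varphi$, and the prescribed traces at $t=0,T$ are each affine and preserved under weak convergence (the traces via continuity of the trace operator from $W^{1,p}([0,T]\times\Tt)$ into $L^p(\Tt)$). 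Third, a competitor of finite energy exists (for instance, a smooth interpolation in time between the fixed boundary profiles together with an appropriate $q$), so the infimum is finite.

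The technical heart is the coercivity estimate, and the exponent $p=\beta\gamma/(\beta+\gamma-1)$ arises precisely here. Assumptions~\ref{assumtionOnG2}--\ref{assumtionOnH2} furnish polynomial lower bounds of the form $G(y)\geq c_1 y^\beta-c_2$ and $L(w)\geq c_3|w|^\gamma-c_4$, hence
\begin{equation*}
L_0(z,y)\geq c_3\,|z|^\gamma\,y^{1-\gamma}-c_4\,y\quad\text{on }\Rr\times\Rr^+.
\end{equation*}
For a minimizing sequence $(\varphi_n,q_n)$, the $G$-term yields $\{\varphi_{n,x}+1\}$ bounded in $L^\beta$ and the $L_0$-term controls the weighted integral $\int|\zeta_n|^\gamma(\varphi_{n,x}+1)^{1-\gamma}\,\dx\,\dt$, where $\zeta_n:=\varphi_{n,t}+q_n-\lambda\varphi_{n,xx}$. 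Writing $|\zeta_n|^p=\bigl(|\zeta_n|^\gamma(\varphi_{n,x}+1)^{1-\gamma}\bigr)^{p/\gamma}(\varphi_{n,x}+1)^{p(\gamma-1)/\gamma}$ and applying H\"older's inequality with conjugate exponents $\gamma/p$ and $\gamma/(\gamma-p)$, the second factor lies in an $L^q$-space with $q=p(\gamma-1)/(\gamma-p)$; demanding $q=\beta$ forces $p=\beta\gamma/(\beta+\gamma-1)$. Thus $\{\zeta_n\}$ is bounded in $L^p$; integrating $\zeta_n$ over $\Tt$ and using $\int_\Tt\varphi_{n,t}\,\dx=\int_\Tt\varphi_{n,xx}\,\dx=0$ gives $q_n(t)=\int_\Tt\zeta_n(t,x)\,\dx$, so $\{q_n\}$ is bounded in $L^p([0,T])$. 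Together with the Poincar\'e inequality in $x$, this yields a uniform $W^{1,p}$-bound on $\{\varphi_n\}$. For $\lambda=1$, separate $L^p$-control of $\varphi_{n,xx}$ comes from $\varphi_{n,t}-\varphi_{n,xx}=\zeta_n-q_n\in L^p$ combined with parabolic $L^p$-maximal-regularity estimates that exploit the fixed smooth traces at $t=0,T$.

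With uniform bounds secured, extract a weakly convergent subsequence $(\varphi_n,q_n)\rightharpoonup(\varphi^\star,q^\star)\in\mathcal{B}_\lambda^p$. Weak lower semicontinuity of the objective follows from the joint convexity of the integrand in the gradient variables via a standard result (e.g.\ Ioffe's theorem), while $-V\varphi_x$ is weakly continuous; hence $(\varphi^\star,q^\star)$ is a minimizer. Uniqueness follows from strict convexity: $H\in C^2$ strictly convex yields strictly convex $L$ by \eqref{L}, whence $L_0$ is strictly jointly convex on $\Rr\times\Rr^+$; combined with convexity of $G$, this forces any two minimizers to coincide. The main obstacle I anticipate is the coercivity step above: the perspective/H\"older trade-off couples the two growth exponents $\beta$ and $\gamma$ in exactly the way that pins down the natural function-space exponent $p$, and keeping track of the degenerate set $\{\varphi_x+1=0\}$ (where $L_0$ may be infinite) requires some care when verifying lower semicontinuity on finite-energy competitors.
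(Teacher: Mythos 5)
Your existence argument tracks the paper's proof closely. The paper proves existence (Propositions~\ref{ex1} and~\ref{ex2}) by exactly this route: lower bounds on the infimum from $L_0\geq 0$ and Jensen applied to $G$, an explicit affine-in-$t$ competitor $\varphi^0$ for the upper bound, then coercivity by combining the growth of $G$ with the weighted bound from $L_0$ to control $\varphi_t+q-\lambda\varphi_{xx}$ in $L^\sigma$ with $\sigma=\beta\gamma/(\beta+\gamma-1)$, followed by recovery of $q$ from the normalization $\int_\Tt\varphi_t\,\dx=0$, Poincar\'e--Wirtinger, heat-equation $L^p$-regularity for $\lambda=1$, and weak lower semicontinuity via a Fonseca--Leoni-type theorem. (You swapped the roles of $\beta$ and $\gamma$ relative to Assumptions~\ref{assumtionOnG1} and~\ref{assumtionOnH2}, and used H\"older where the paper uses Young; both are immaterial since the target exponent is symmetric in $\beta$, $\gamma$.)

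The uniqueness argument, however, contains a genuine gap. You claim that because $L$ is strictly convex, the perspective function $L_0(z,y)=yL(z/y)$ is strictly jointly convex on $\Rr\times\Rr^+$. This is false: $L_0$ is positively homogeneous of degree one ($L_0(tz,ty)=tL_0(z,y)$ for $t>0$), so it is affine along any ray through the origin and can never be strictly jointly convex, regardless of how strictly convex $L$ is. For instance with $L(w)=w^2$ one has $L_0(z,y)=z^2/y$ and $L_0(3/2,3/2)=\tfrac12 L_0(1,1)+\tfrac12 L_0(2,2)$ with $(1,1)\neq(2,2)$. Consequently, convexity of $G$ alone (which is all you invoke in the uniqueness step) is not enough; two distinct minimizers could differ by rescaling along a ray in $L_0$ while keeping the $G$-term the same. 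The paper's proof (Proposition~\ref{uniq1}) closes this gap in two moves that your sketch must incorporate: first, it uses \emph{strict} convexity of $G$ (Assumption~\ref{assumtionOnG2}) to force $\varphi^1_x=\varphi^2_x$; once the $y$-slot of $L_0$ is fixed, the remaining dependence reduces to the genuinely strictly convex one-dimensional $L$, yielding $\varphi^1_t+q_1=\varphi^2_t+q_2$ a.e.~(with a separate elementary argument on the degenerate set $\{\bar\varphi_x+1=0\}$, where finiteness of the energy forces the numerator to vanish). Second, integrating over $\Tt$ and invoking $\int_\Tt\varphi^i_t\,\dx=0$ separates $q_1=q_2$ from $\varphi^1_t=\varphi^2_t$, and then $\varphi^1=\varphi^2$ via the common boundary trace. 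Without strict convexity of $G$ your argument cannot get started, and without the projection onto $L$ it cannot be finished.
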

    In Sections~\ref{sec2} and \ref{sec3}, we address    Problem~\ref{problem}. In particular, in Section~\ref{sec2}, we derive the variational problem corresponding to  $\lambda=0$ in Problem~\ref{problem} and prove the existence and uniqueness of the minimizer. In Section~\ref{sec3}, we treat the  $\lambda=1$ case. 
    
The second part of this paper is devoted to studying the following first-order planning problem with congestion.
        \begin{problem} \label{con} Suppose that $m_0$, $m_T\in \mathcal{P}(\Tt) \cap C^2(\Tt)$ and $\alpha$, $\mu>0$. Find  $(u,m)\in C^2([0,T]\times \Tt)\times C^2([0,T]\times \Tt)$ satisfying  $m\geq 0$,
                \begin{equation*}
                \begin{cases} 
                -u_t+\frac{u_x^2}{2 m^\alpha}=m^\mu &  \\ m_t- (u_x m^{1-\alpha})_x=0
                \end{cases} \text{in}\,\ (0,T)\times \Tt,
                \end{equation*}
                and the boundary conditions in \eqref{boundaryP}.
        \end{problem}  
        To study Problem \ref{con}, we proceed as before and introduce a potential (see Section \ref{sec1} for more details). In contrast with the methodology used to address Problem \ref{planningSG}, in this case,  the  potential method {\color{red}{leads}} us {\color{red}to} a system of PDE's that is not an Euler--Lagrange equation. In particular, by using the method of potential, we derive the following problem.
        
         \begin{problem}\label{con-var}Consider the setting of Problem \ref{con} and  let $p>1$. Find $(\varphi,q)\in \mathcal{B}_0^{p}$ satisfying
                \begin{equation*}
        \begin{cases}
        - \Big( \frac{\varphi_t+q}{ (\varphi_x+1)^{1-\alpha}}\Big)_t  +\frac{1}{2}\Big(\frac{(\varphi_t+q)^2}{ (\varphi_x+1)^{2-\alpha}} \Big)_x- ((\varphi_x+1)^\mu)_x=0\quad  \text{in}\,\ (0,T)\times \Tt\\
\int_{\mathbb{T}}(\varphi_x+1)^{\alpha-1}(\varphi_t+q)\,\dx=0\quad \text{in}\,\ (0,T)
        \end{cases},
                \end{equation*}  
                where  $\mathcal{B}_0^{p}$ is defined by \eqref{B00}.
        \end{problem}
       We solve this system using variational inequalities methods; see, for example,  \cite{FG2} and \cite{FGT1}. 
The next theorem provides the existence of solutions to Problem \ref{con-var}.
\begin{teo}\label{con-ex} Let \(\alpha\in(0,2)\) and \(\mu>0\) be such that $\alpha<\mu+1$. Suppose that Assumption~\ref{assumtionOnBoundsA}
holds with    \(m_0\),
\(m_T \in H^{5}(\Tt)\).  Then, there exists a weak solution $(\varphi,q)\in \mathcal{B}_0^{\kappa}$ to Problem~\ref{con-var} (in the sense of Definition~\ref{def-weak}),  where $\kappa=\min\big\{\frac{2(\mu+1)}{\mu+2-\alpha},\mu+1\big\}$ when $\alpha\in(0,1]$ and $\kappa=\mu+1$ when $\alpha\in(1,2)$.
\end{teo}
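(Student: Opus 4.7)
The system in Problem~\ref{con-var} is not the Euler--Lagrange equation of a convex functional, so the direct method used in Theorem~\ref{exist2} does not apply. The plan is to combine a regularization argument with the variational-inequality techniques of \cite{FG2,FGT1}.

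I would first introduce, for each $\epsilon>0$, a regularized problem obtained by adding to the operator a small strictly monotone perturbation, together with a truncation that guarantees $\varphi_x+1\geq\delta_\epsilon>0$. These modifications make the principal part coercive on a suitable convex subset of $\mathcal{B}_0^p$ and remove the singularity of the denominator $(\varphi_x+1)^{2-\alpha}$. On this admissible set the regularized operator becomes pseudo-monotone and coercive, so existence of a solution $(\varphi_\epsilon,q_\epsilon)$ to the corresponding variational inequality follows from the Browder--Minty theory, as in \cite{FG2,FGT1}. The scalar $q_\epsilon$ enters naturally as the Lagrange multiplier enforcing the linear constraint $\int_\Tt(\varphi_x+1)^{\alpha-1}(\varphi_t+q)\,\dx=0$.

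I would then derive uniform-in-$\epsilon$ bounds by testing the regularized equation against $\varphi_\epsilon-\bar\varphi$, where $\bar\varphi$ is a smooth function interpolating the boundary data built from $m_0$ and $m_T$ (the regularity $m_0,m_T\in H^5(\Tt)$ is used here to absorb the regularization terms). Integration by parts combined with the normalization for $q_\epsilon$ should produce an $L^{\mu+1}$-bound on $m_\epsilon:=\varphi_x^\epsilon+1$ coming from the term $\int_0^T\!\int_\Tt (m_\epsilon)^\mu\varphi_x^\epsilon\,\dx\dt$, together with an energy bound $\int_0^T\!\int_\Tt\tfrac{(\varphi_t^\epsilon+q_\epsilon)^2}{(m_\epsilon)^{1-\alpha}}\,\dx\dt\leq C$. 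Combining these via H\"older's inequality yields $\varphi_t^\epsilon+q_\epsilon\in L^\kappa$ with the exponent $\kappa$ of the statement; the split into $\alpha\in(0,1]$ and $\alpha\in(1,2)$ corresponds to which of the two terms inside the minimum controls the integrability, and the hypothesis $\alpha<\mu+1$ ensures that the interpolation exponents are admissible and that $\kappa>1$.

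The uniform bounds then yield a subsequence converging weakly in $\mathcal{B}_0^\kappa$, and a Rellich--Kondrachov-type argument provides strong convergence of $m_\epsilon$ in some $L^r$ and hence pointwise a.e.\ convergence of $(m_\epsilon)^\mu$. The main obstacle is passing to the limit in the genuinely nonlinear term $\tfrac{(\varphi_t+q)^2}{(\varphi_x+1)^{2-\alpha}}$, which is not weakly continuous. When $\alpha\in(0,1]$ the associated Lagrangian $(z,y)\mapsto z^2/(2y^{1-\alpha})$ is jointly convex on $\Rr\times\Rr^+$ and a Minty-type monotonicity trick applies; when $\alpha\in(1,2)$ convexity fails and one has to combine the $L^{\mu+1}$ bound on $m_\epsilon$ with the hypothesis $\alpha<\mu+1$ to prevent $\varphi_x+1$ from degenerating in the limit. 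This dichotomy is ultimately what forces the two cases in the definition of $\kappa$.
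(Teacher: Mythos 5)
You correctly identify the overall architecture — regularization, uniform a priori bounds producing the exponent $\kappa$, and weak compactness — and the way the $L^{\mu+1}$ bound on $m_\epsilon$ interacts via H\"older/Young with the energy term to give $\varphi_t^\epsilon+q_\epsilon\in L^\kappa$. That much agrees with Propositions~\ref{apri-1} and \ref{p2} and Corollary~\ref{lim}.

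The genuine gap is in the last step, and it stems from not having pinned down what a weak solution \emph{is}. In Definition~\ref{def-weak}, a weak solution is a pair $(\varphi,q)$ satisfying the Minty-type variational inequality
\[
\left\langle A\begin{bmatrix}\psi\\ \varpi\end{bmatrix},\;\begin{bmatrix}\psi\\ \varpi\end{bmatrix}-\begin{bmatrix}\varphi\\ q\end{bmatrix}\right\rangle\geq 0
\quad\text{for all }(\psi,\varpi)\in D(A),
\]
where the nonlinear operator $A$ is evaluated \emph{only at the smooth, fixed test pair} $(\psi,\varpi)$ — never at the unknown $(\varphi,q)$ itself. Because $A[\psi,\varpi]$ is then a fixed bounded function, passing $\epsilon_n\to 0$ in the inequality only requires the weak convergence $(\varphi_{\epsilon_n},q_{\epsilon_n})\rightharpoonup(\varphi,q)$ already established, together with the monotonicity of $A$ (Proposition~\ref{mon-A}) to rearrange the regularized inequality so that the nonlinear part sits on the test function. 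This is precisely the device that removes the obstacle you flag as the ``main obstacle'' — weak continuity of $(\varphi_t+q)^2/(\varphi_x+1)^{2-\alpha}$ — and it is why the paper never needs to identify the limit of that term. Your proposed dichotomy — joint convexity of $(z,y)\mapsto z^2/y^{1-\alpha}$ for $\alpha\leq 1$, and some ad hoc ``prevent degeneracy'' argument for $\alpha\in(1,2)$ — therefore does not reflect the actual mechanism; indeed Proposition~\ref{mon-A} shows that $A$ is monotone on $D(A)$ for \emph{every} $\alpha\in(0,2)$, even though the individual Lagrangians lose convexity past $\alpha=1$. The split in $\kappa$ at $\alpha=1$ comes solely from the different integrability exponents produced by the energy estimates in Propositions~\ref{apri-1} and \ref{p2}, not from any convexity threshold affecting the limit passage. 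As stated, your plan for $\alpha\in(1,2)$ is vague and would likely fail if one insists on recovering a pointwise equation for the limit.

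Two smaller differences worth noting. First, for the existence of the regularized solutions, the paper does not invoke Browder--Minty for a pseudomonotone operator; it instead splits the regularized system into two separate convex sub-problems — a higher-order obstacle-type minimization in $\varphi$ over $\mathcal{M}^6_\epsilon$ (Proposition~\ref{prop:minpb}) and a Lax--Milgram problem in $q$ (Proposition~\ref{prop:bipb}) — and then applies Schaefer's fixed-point theorem (Theorem~\ref{Sch4-1}) to the resulting map $S$, with continuity and compactness proved in Proposition~\ref{con-comp}. Both routes are standard in the variational-inequalities literature cited. Second, $q$ is not obtained as a Lagrange multiplier; the equation $F_2(\varphi,q)=0$ is part of the PDE system, and $q$ is an honest unknown regularized by $\epsilon(q-q'')$.
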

Similarly to Proposition \ref{prorel}, the following  proposition states sufficient conditions under which the weak solutions to  Problem~\ref{con-var} provide solutions to Problem~\ref{con}.
\begin{pro}\label{prorelc}  Let  $(\varphi,q)\in C^{2}([0,T]\times \Tt)\times C^{1}([0,T])$ solve Problem \ref{con-var} (in the sense of Definition~\ref{def-weak}). Assume that $\varphi_x+1>0$ and
        \begin{equation*}
        \begin{cases}
        m(t,x)=\varphi_x(t,x)+1   \\  u(t,x)=\int_{0}^{x}(\varphi_x+1)^{\alpha-1}(\varphi_t+q)\,\d \tau .
        \end{cases}
        \end{equation*}
Then, there exists a function, \(\vartheta:[0,T]\to\Rr\), only depending on \(t\), such that $(\tilde u, m)=(u + \vartheta,m)$ solves Problem~\ref{con}.
\end{pro}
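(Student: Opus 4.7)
The strategy is a direct verification closely paralleling Proposition~\ref{prorel}: start from the definitions of $m$ and $u$ and check each of the two PDEs in Problem~\ref{con}, using the integrated form of the first equation of Problem~\ref{con-var} plus the integral constraint to fix the additive function $\vartheta(t)$.

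First I would handle the continuity equation. By definition, $m=\varphi_x+1$, so $m_t=\varphi_{tx}$. On the other hand, $u_x=(\varphi_x+1)^{\alpha-1}(\varphi_t+q)=m^{\alpha-1}(\varphi_t+q)$, hence $u_x m^{1-\alpha}=\varphi_t+q$ and $(u_x m^{1-\alpha})_x=\varphi_{tx}+q_x=\varphi_{tx}$, which immediately yields $m_t-(u_x m^{1-\alpha})_x=0$. The boundary conditions $m(0,\cdot)=m_0$ and $m(T,\cdot)=m_T$ follow directly from differentiating in $x$ the endpoint prescriptions defining $\mathcal{B}_{0}^{p}$ in \eqref{B00}, while the integral condition $\int_{\Tt}(\varphi_x+1)^{\alpha-1}(\varphi_t+q)\,\dx=0$, i.e.\ $\int_{\Tt}u_x\,\dx=0$, guarantees that $u(t,\cdot)$, initially defined through an integral from $0$ to $x$, is a genuine function on $\Tt$.

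Next I would tackle the Hamilton--Jacobi equation. Squaring $u_x=m^{\alpha-1}(\varphi_t+q)$ gives
\begin{equation*}
\frac{u_x^2}{2m^{\alpha}}=\frac{(\varphi_t+q)^2}{2(\varphi_x+1)^{2-\alpha}}.
\end{equation*}
Define $F(t,x):=-u_t(t,x)+\tfrac{u_x^2}{2m^\alpha}(t,x)-m^\mu(t,x)$. Differentiating $F$ in $x$ and using $u_{tx}=(u_x)_t=\bigl((\varphi_t+q)/(\varphi_x+1)^{1-\alpha}\bigr)_t$ together with the smoothness of $(\varphi,q)\in C^2\times C^1$, one gets
\begin{equation*}
F_x=-\Bigl(\tfrac{\varphi_t+q}{(\varphi_x+1)^{1-\alpha}}\Bigr)_t+\tfrac12\Bigl(\tfrac{(\varphi_t+q)^2}{(\varphi_x+1)^{2-\alpha}}\Bigr)_x-\bigl((\varphi_x+1)^\mu\bigr)_x,
\end{equation*}
which is precisely the first equation of Problem~\ref{con-var} and hence vanishes. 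Therefore $F(t,x)=\tilde\vartheta(t)$ depends only on $t$.

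Finally, setting $\vartheta(t):=-\int_0^t \tilde\vartheta(s)\,\d s$ and $\tilde u:=u+\vartheta$, one has $\tilde u_x=u_x$ and $\tilde u_t=u_t-\tilde\vartheta(t)$, so
\begin{equation*}
-\tilde u_t+\frac{\tilde u_x^2}{2m^\alpha}-m^\mu=F(t,x)-\tilde\vartheta(t)=0,
\end{equation*}
and the pair $(\tilde u,m)$ solves Problem~\ref{con}. I do not expect a substantive obstacle here: the proof is purely computational, the only subtlety being that the Hamilton--Jacobi equation holds only after the adjustment by $\vartheta(t)$, since the potential-based construction of $u$ recovers $u_x$ directly but leaves an $x$-independent ambiguity in $u_t$, which is exactly why the statement allows the correction by a function of time alone.
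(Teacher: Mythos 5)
There is a genuine gap: you have silently upgraded the hypothesis. The statement assumes that $(\varphi,q)$ solves Problem~\ref{con-var} \emph{in the sense of Definition~\ref{def-weak}}, i.e.\ $(\varphi,q)$ satisfies the variational inequality
\[
\left\langle A\begin{bmatrix}\psi \\ \varpi\end{bmatrix}, \begin{bmatrix}\psi \\ \varpi\end{bmatrix}-\begin{bmatrix}\varphi \\q\end{bmatrix}\right\rangle\geq 0 \quad\text{for all } (\psi,\varpi)\in D(A),
\]
not the pointwise system $F_1(\varphi,q)=0$, $F_2(\varphi,q)=0$. In your second paragraph you write "which is precisely the first equation of Problem~\ref{con-var} and hence vanishes," and you later invoke $F_2(\varphi,q)=0$ to make $u$ well-defined on the torus --- but neither of these identities is given; they must be \emph{derived} from the variational inequality. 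That derivation is exactly the main content of the paper's proof: exploiting $\varphi_x+1>\delta>0$ and the $C^2\times C^1$ regularity, one is allowed to take $\psi=\varphi+\tau\tilde\psi$ and $\varpi=q+\tau\tilde q$ as test pairs in $D(A)$ for $|\tau|$ small and test functions $\tilde\psi\in C_c^\infty$, $\tilde q\in C_0^1$, then send $\tau\to 0$ (from both signs) to conclude $F_1(\varphi,q)=0$ and $F_2(\varphi,q)=0$ pointwise. Only then does the pointwise computation you carry out apply.

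Once that step is supplied, the remainder of your argument is exactly the paper's Proposition~\ref{prorelc1} (whose proof the paper omits as "similar to that of Proposition~\ref{prorel}"): verification of the continuity equation and boundary conditions, the observation that $F_x=-F_1(\varphi,q)=0$ so that $-u_t+u_x^2/(2m^\alpha)-m^\mu$ is a function of $t$ alone, and the final absorption by $\vartheta(t)$. That part is correct. In short: you proved the classical-to-solution implication (Proposition~\ref{prorelc1}) but skipped the weak-to-classical step, which is what Proposition~\ref{prorelc} actually requires on top of Proposition~\ref{prorelc1}.
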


We conclude this paper by presenting another application of the potential method in Section \ref{sec4}. Using this method, we explicitly solve the one-dimensional Hughes' model system in \eqref{Hu1}.
        
        \section{Motivation For The Variational Problem}
        \label{sec1}
        
              Here, we formally derive the variational  problem in Problem~\ref{problem}  and discuss the variational approach corresponding to the first-order MFP in $d>1$ dimension. As we mentioned in the Introduction, our
technique relies on introducing a potential function, \(\varphi\),
that integrates the second equation in \eqref{MFGs}. We start
by providing an interpretation of this potential function. Then, we give the formal derivation of the variational problem. We end this section by describing the  variational approach in $d>1$ dimension.
  
\
\paragraph{\textbf{Interpretation of the potential}} Let $\mu\in
\mathcal{P}(\Rr) \cap C^1([0,T]\times\Rr)$  and $J\in C^1([0,T]\times\Rr)$.
Suppose $(\mu,J)$ satisfies the following transport equation
\begin{equation}\label{transport}
\div_{(t,x)}(\mu,-J) = \mu_t- J_x=0.
\end{equation}

By Poincar{\'e's} lemma (see \cite[Theorem 1.22]{Csato2011ThePE}),
there exists a function $\psi:(0,T)\times \Rr\to\Rr$ such that
        \begin{equation}\label{intrip}
\begin{cases}
\mu=\psi_x\\
J=\psi_t.
\end{cases}
\end{equation}  
The first equation in \eqref{intrip} implies that $\psi$ is the
cumulative distribution of $\mu$; that is, for  any level set
$\psi(t,x(t))\equiv a$, $a\in \Rr^+_0$, we have 
\begin{equation*}
\int_{-\infty}^{x(t)}\mu(t,\tau)\,\d \tau=a.
\end{equation*}
Hence, the level sets of $\psi$ can be interpreted as quantiles.
Furthermore, because $\psi_t=J$, the quantile at a point $(t,x)$
increases according to the current, $J$, through that point as
can be seen by the identity
\begin{equation*}
\psi_t=\frac{\d}{\dt}\int_{-\infty}^{x}\mu(t,\tau)\,\d \tau=J(t,x).
\end{equation*}

We further observe that the equation in \eqref{transport} can
be rewritten as 
\begin{equation*}
\mu_t+ \left( \mu\big( -\tfrac{J}{\mu}\big) \right) _x=0,
\end{equation*} 
where $\big( -\tfrac{J}{\mu}\big)$ is the velocity field. Consider the
trajectory corresponding to the velocity $\big( -\tfrac{J}{\mu}\big)$;
that is, 
\begin{equation}\label{velo}
\dot x=-\frac{J(t, x(t))}{\mu(t, x(t))}.
\end{equation}
From \eqref{intrip} and \eqref{velo}, we have
\begin{equation*}\
\frac{\d}{\dt}\psi(t,x(t))=\psi_x\dot x+\psi_t=0.
\end{equation*}
Thus, following the velocity field, the quantile is constant
in time.

\
\paragraph{\textbf{Derivation of the variational problem}}        
        We now formally derive the variational  problem
in Problem~\ref{problem}. To
show the connection between this problem and Problem~\ref{planningSG},
we suppose that all functions considered in the remaining of this section are
smooth enough. Later, we discuss the precise functional setting
needed to make our results rigorous.

Suppose that $(u,m)$ solves Problem~\ref{planningSG} with $m>0$. Then, the second equation in \eqref{planningSGS} can be written as
        \begin{equation}        \label{div1}
        \div_{(t,x)}(m,-\lambda m_x-H^\prime(u_x)m)=0.
        \end{equation}
        As before, by Poincar{\'e}'s lemma (see \cite[Theorem 1.22]{Csato2011ThePE}) and  \eqref{div1}, there exist functions, $\varphi:(0,T)\times \Tt\to\Rr$ and $q:(0,T)\to\Rr$,  and a constant $c\in \Rr$ such that 
        \begin{equation}\label{um_cq}
        \begin{cases}
        m=\varphi_x+c   \\ \lambda m_x +H^\prime(u_x)m=\varphi_t+q(t).
        \end{cases}
        \end{equation}
By adding a constant to $\varphi$, we can assume that $\int_{0}^{T}\!\!\!\int_{\mathbb{T}}\varphi\,\dx\dt=0$. We observe further that the choice of $q$ is not unique. Here,
  we select $q$ such that $\int_{\mathbb{T}}\varphi\,\dx=0$ for all $0\leq t\leq T$. Because $m$ is a probability density, we have $c=1$ and $m=\varphi_x+1>0$.  Therefore, using \eqref{L} in \eqref{um_cq}, we get
        \begin{equation}\label{um_pi}
        \begin{cases}
        m=\varphi_x+1   \\  u_x=L^\prime\big(\frac{ \varphi_t+q-\lambda \varphi_{xx}}{\varphi_x+1}\big).
        \end{cases}
        \end{equation}
         
We use the transformation in \eqref{um_pi} to deduce the variational formulation in Problem~\ref{problem}.         
        First,  we notice that the second equation in \eqref{um_pi} implies
        \begin{equation}\label{int_ux}
        0=\int_{\mathbb{T}} u_x\,\dx =\int_{\mathbb{T}} L^\prime\Big(\frac{ \varphi_t+q-\lambda \varphi_{xx}}{\varphi_x+1}\Big)\,\dx. 
        \end{equation}
        On the other hand, recalling  that $\varphi_x+1>0$,   we get from \eqref{defLTilde} that
        \begin{equation}\label{q11}
        \frac{\partial}{\partial q}L_0(\varphi_t+q-\lambda\varphi_{xx},\varphi_x+1)=
        L^\prime\Big(\frac{ \varphi_t+q-\lambda \varphi_{xx}}{\varphi_x+1}\Big).
        \end{equation}
        The two preceding identities,  \eqref{int_ux} and \eqref{q11}, imply  that
        \begin{equation}\label{qEq}
        \begin{split}
        \int_{\mathbb{T}} \frac{\partial}{\partial q}L_0(\varphi_t+q-\lambda\varphi_{xx},\varphi_x+1)\,\dx =0.
        \end{split}
        \end{equation}

 Next, we write the first equation in \eqref{planningSGS} in terms of $\varphi$ and $q$. After differentiating with respect to $x$, the first equation in \eqref{planningSGS} becomes
        \begin{equation}\label{HJux}
        -\left( u_{x}\right)_t -\lambda \left( u_{x}\right)_{xx} +\big( H(u_x)\big)_x+V^\prime =m_xg^\prime(m).
        \end{equation}
        Accordingly, using \eqref{um_pi}, we  get
        \begin{equation}\label{phiEq0}
        \begin{split}
        &-\left( L^\prime\Big(\frac{ \varphi_t+q-\lambda \varphi_{xx}}{\varphi_x+1}\Big)\right)_t-\lambda \left(L^\prime\Big(\frac{ \varphi_t+q-\lambda \varphi_{xx}}{\varphi_x+1}\Big)\right)_{xx}  \\&\quad+ \left( H \Big( L^\prime\Big(\frac{ \varphi_t+q-\lambda \varphi_{xx}}{\varphi_x+1}\Big)\Big) \right)_x+V^\prime-(\varphi_x+1)_xg^\prime(\varphi_x+1)=0.
        \end{split}
        \end{equation}
On the other hand,         the  identity in \eqref{L} implies that
        \begin{equation*}
\begin{split}
        \left( H \Big( L^\prime\Big(\frac{ \varphi_t+q-\lambda \varphi_{xx}}{\varphi_x+1}\Big)\Big) \right)_x&= H^\prime \Big( L^\prime\Big(\frac{ \varphi_t+q-\lambda \varphi_{xx}}{\varphi_x+1}\Big)\Big)\left( L^\prime\Big(\frac{ \varphi_t+q-\lambda \varphi_{xx}}{\varphi_x+1}\Big)\right)_x\\&=\frac{ \varphi_t+q-\lambda \varphi_{xx}}{\varphi_x+1}\left(L^\prime\Big(\frac{ \varphi_t+q-\lambda \varphi_{xx}}{\varphi_x+1}\Big) \right)_x.
\end{split}
        \end{equation*}
        This identity and \eqref{phiEq0} yield
        \begin{equation}\label{phiEq3}
        \begin{split}
        &-\left( L^\prime\left(\frac{ \varphi_t+q-\lambda \varphi_{xx}}{\varphi_x+1}\right)\right)_t-\lambda \left(L^\prime\Big(\frac{ \varphi_t+q-\lambda \varphi_{xx}}{\varphi_x+1}\Big)\right)_{xx}  \\&\quad+ \frac{ \varphi_t+q-\lambda \varphi_{xx}}{\varphi_x+1}\left(L^\prime\Big(\frac{ \varphi_t+q-\lambda \varphi_{xx}}{\varphi_x+1}\Big) \right)_x+V^\prime-\big(\varphi_x+1\big)_x\,g^\prime(\varphi_x+1)=0.
        \end{split}
        \end{equation}

        \begin{pro}\label{El1} Let $\lambda\in\{0,1\}$, $\mathcal{A}=C^{\lambda+2}([0,T]\times \Tt)\times C^{\lambda+1}([0,T])$, $G$ be given by \eqref{G-def} and $L_0$ by \eqref{defLTilde}. Suppose that $(\varphi,q)$ minimizes the functional 
                \begin{equation}\label{varProL0}
                \int_{0}^{T}\!\!\!\int_{\mathbb{T}} \big(L_0(\varphi_t+q-\lambda\varphi_{xx},\varphi_x+1)-V\varphi_{x}+G( \varphi_x+1)\big)\,\dx\dt
                \end{equation}
                over $\mathcal{A}$ and satisfies the condition $\varphi_x+1>0$. Then, the  Euler--Lagrange equations corresponding to the functional in \eqref{varProL0} are comprised by \eqref{qEq} and \eqref{phiEq3}.
        \end{pro}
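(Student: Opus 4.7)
The plan is a standard first-variation calculation: perturb $\varphi$ and $q$ independently by compactly supported smooth test functions, differentiate \eqref{varProL0} at $\epsilon=0$, integrate by parts, and invoke the fundamental lemma of the calculus of variations. The only nontrivial algebraic input is a description of the partial derivatives of $L_0$ on the open region $\{y>0\}$ where $L_0(z,y)=L(z/y)\,y$ is smooth. Direct differentiation gives
\[
\partial_z L_0(z,y)=L'\big(\tfrac{z}{y}\big),\qquad \partial_y L_0(z,y)=L\big(\tfrac{z}{y}\big)-\tfrac{z}{y}\,L'\big(\tfrac{z}{y}\big).
\]
The first formula is \eqref{q11}. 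For the second, Legendre duality together with \eqref{L} yields the fundamental identity $L(w)=w L'(w)-H(L'(w))$, and therefore $\partial_y L_0(z,y)=-H(L'(z/y))$. This identity is what ultimately produces the term $w\,(L'(w))_x$ in \eqref{phiEq3}.

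\textbf{Variation in $q$.} Because $\varphi_x+1>0$ on the compact set $[0,T]\times\Tt$, this strict positivity is preserved under sufficiently small $C^0$ perturbations of $\varphi_{xx}$ and $q$. Hence for every $h\in C_c^\infty((0,T))$ and $|\epsilon|$ small, $(\varphi,q+\epsilon h)\in\mathcal{A}$ and the argument of $L_0$ stays in the smooth region. Since only the $L_0$-term in \eqref{varProL0} depends on $q$, \eqref{q11} and minimality give
\[
\int_0^T h(t)\int_{\Tt} L'\bigg(\frac{\varphi_t+q-\lambda\varphi_{xx}}{\varphi_x+1}\bigg)\dx\dt=0,
\]
and the arbitrariness of $h$ yields \eqref{qEq}.

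\textbf{Variation in $\varphi$ and reduction to \eqref{phiEq3}.} Writing $w:=(\varphi_t+q-\lambda\varphi_{xx})/(\varphi_x+1)$ and perturbing $\varphi$ by $\eta\in C_c^\infty((0,T)\times\Tt)$, the preliminary formulas for $\partial_z L_0$ and $\partial_y L_0$ give
\[
\int_0^T\!\!\int_{\Tt}\Big(L'(w)(\eta_t-\lambda\eta_{xx})-H(L'(w))\,\eta_x-V\eta_x+g(\varphi_x+1)\,\eta_x\Big)\dx\dt=0.
\]
Integrating by parts in $t$ and $x$, with boundary contributions vanishing from the compact support of $\eta$ in time and the periodicity in space, and applying the fundamental lemma produce
\[
-(L'(w))_t-\lambda\,(L'(w))_{xx}+\big(H(L'(w))\big)_x+V'-\big(g(\varphi_x+1)\big)_x=0.
\]
To match \eqref{phiEq3} verbatim, the chain rule together with $H'\circ L'=\mathrm{id}$ (from \eqref{L}) collapses the third term: $(H(L'(w)))_x=H'(L'(w))\,(L'(w))_x=w\,(L'(w))_x$. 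Combined with $(g(\varphi_x+1))_x=(\varphi_x+1)_x\,g'(\varphi_x+1)$, this is exactly \eqref{phiEq3}. The main technical point is the Legendre simplification $\partial_y L_0=-H\circ L'$; everything else is bookkeeping, and the regularity $(\varphi,q)\in C^{\lambda+2}\times C^{\lambda+1}$ is what makes the integrations by parts and the pointwise form of the equation legitimate.
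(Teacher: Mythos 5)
Your proof is correct and follows the same first-variation strategy as the paper. The only organizational difference is in how you simplify the term coming from $\partial_y L_0$: you invoke Legendre duality to write $\partial_y L_0 = -H\circ L'$ and then apply the chain rule with $H'\circ L'=\mathrm{id}$ to get $w\,(L'(w))_x$, whereas the paper's proof computes $\big(\partial_{\varphi_x}L_0\big)_x=\big(L(w)-wL'(w)\big)_x$ directly and observes the immediate cancellation leaving $-w\,(L'(w))_x$ (see \eqref{q23}); the two manipulations are the same computation packaged differently.
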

        \begin{proof} Because  $q=q(t)$ depends only on  $t$, the Euler--Lagrange equation of the functional in \eqref{varProL0} with respect to $q$  is
                \begin{equation*}
        \begin{split}
        \frac{\partial}{\partial q}\int_{\mathbb{T}}\big(L_0(\varphi_t+q-\lambda\varphi_{xx},\varphi_x+1)-V\varphi_{x}+G( \varphi_x+1)\big)\,\dx =0.
        \end{split}
        \end{equation*}
        Exchanging the derivative with the integral, we get \eqref{qEq}.

        The Euler--Lagrange equation of the functional in \eqref{varProL0} with respect to $\varphi$ is 
                \begin{equation}\label{el1}
                \begin{aligned}
                &-\left( \frac{\partial}{\partial \varphi_{x}}  L_0(\varphi_t+q-\lambda\varphi_{xx},\varphi_x+1)-V+ g(\varphi_x+1) \right)_x\\
                &\quad-\left( \frac{\partial}{\partial \varphi_{t}}  L_0(\varphi_t+q-\lambda\varphi_{xx},\varphi_x+1) \right)_t+\left( \frac{\partial}{\partial \varphi_{xx}}  L_0(\varphi_t+q-\lambda\varphi_{xx},\varphi_x+1) \right)_{xx}=0.
                \end{aligned}
                \end{equation}
Expanding the first term in \eqref{el1}, we get
\begin{equation}\label{phiEq}
\begin{aligned}
&-\left(\frac{\partial}{\partial \varphi_{x}}L_0(\varphi_t+q-\lambda\varphi_{xx},\varphi_x+1) \right)_x+V^\prime-\big(\varphi_x+1\big)_x\,g^\prime(\varphi_x+1) \\
&\quad-\left( \frac{\partial}{\partial \varphi_{t}}L_0(\varphi_t+q-\lambda\varphi_{xx},\varphi_x+1)\right)_t+ \left(\frac{\partial}{\partial \varphi_{xx}}L_0(\varphi_t+q-\lambda\varphi_{xx},\varphi_x+1)\right)_{xx}  =0.
\end{aligned}
\end{equation}
 Because $\varphi_x+1>0$, we have
\begin{equation}\label{q12}
\frac{\partial}{\partial \varphi_{xx}}L_0(\varphi_t+q-\lambda\varphi_{xx},\varphi_x+1)=
-\lambda L^\prime\Big(\frac{ \varphi_t+q-\lambda \varphi_{xx}}{\varphi_x+1}\Big),
\end{equation}
\begin{equation}\label{q14}
\frac{\partial}{\partial \varphi_{t}}L_0(\varphi_t+q-\lambda\varphi_{xx},\varphi_x+1)=
L^\prime\Big(\frac{ \varphi_t+q-\lambda \varphi_{xx}}{\varphi_x+1}\Big),
\end{equation}
and
\begin{equation}\label{q13}
\frac{\partial}{\partial \varphi_x}L_0(\varphi_t+q-\lambda\varphi_{xx},\varphi_x+1)=
\frac{\partial}{\partial \varphi_x}\left( L\Big(\frac{ \varphi_t+q-\lambda \varphi_{xx}}{\varphi_x+1}\Big)(\varphi_x+1)\right).
\end{equation}
Notice that
\begin{equation}\label{q23}
\begin{split}
&\left( \frac{\partial}{\partial \varphi_x}\left( L\Big(\frac{ \varphi_t+q-\lambda \varphi_{xx}}{\varphi_x+1}\Big)(\varphi_x+1)\right)\right)_x\\= &\left( L\Big(\frac{ \varphi_t+q-\lambda \varphi_{xx}}{\varphi_x+1}\Big)\right) _x- \left(  \frac{ \varphi_t+q-\lambda \varphi_{xx}}{\varphi_x+1}L^\prime\Big(\frac{ \varphi_t+q-\lambda \varphi_{xx}}{\varphi_x+1}\Big) \right)_x\\=&-\frac{ \varphi_t+q-\lambda \varphi_{xx}}{\varphi_x+1}\left(L^\prime\Big(\frac{ \varphi_t+q-\lambda \varphi_{xx}}{\varphi_x+1}\Big) \right)_x.
\end{split}
\end{equation}
Thus,   combining \eqref{q11}, \eqref{q12}, \eqref{q14}, \eqref{q13}, and \eqref{q23},  we get \eqref{phiEq3} from \eqref{phiEq}.
        \end{proof}
\begin{proof}[Proof of Proposition~\ref{prorel}] We first observe that \((u,m)\) satisfies the continuity equation, the second equation in \eqref{planningSGS},  from the definition of the functions $u$ and $m$ in \eqref{proreleq}. To complete the proof, it remains to address the Hamilton--Jacobi equation, the first equation in \eqref{planningSGS}. Since $(\varphi, q)$ solves Problem~\ref{problem} and $\varphi_x+1>0$, then by Proposition~\ref{El1}, we deduce that $\varphi$ and $q$ satisfy \eqref{phiEq}. According to the proof of Proposition~\ref{El1}, we know that \eqref{phiEq} is equivalent to \eqref{phiEq0}. Finally, by using \eqref{proreleq}, we get \eqref{HJux} from \eqref{phiEq0}, which implies that there exists a function, \(c:[0,T]\to\Rr\), only depending
on \(t\), such that
\begin{equation*}
\begin{aligned}
-u_t-\lambda u_{xx}+H(u_x)+V(x)-g(m) = c.
\end{aligned}
\end{equation*}
Hence, setting \(\vartheta(t)= \int_0^t c(s)\,\d s\) and \(\tilde u(x,t) = u(x,t) + \vartheta(t)\), we conclude that 
 $(\tilde u, m)$ solves both equations   in \eqref{planningSGS}; that  is, \((\tilde u,m)\) is a solution to Problem~\ref{planningSG}.
\end{proof}

       \subsection{Variational approach for the first-order MFP problem in dimension greater than 1.}\label{hrem}
                In this part, we describe the variational approach for the first-order planning problem for   $d>1$. We consider the MFP
                   \begin{equation}
                \label{MFGs-d>1}
                \begin{cases} 
                -u_t+H(Du)+V(x)=g(m)\quad &\text{in}\,\ (0,T)\times \Tt^d  \\ m_t-\div (mH_p(x,Du))=0 \quad &\text{in}\,\ (0,T)\times \Tt^d \\
                m(0,x)=m_0(x), \quad m(T,x)=m_T(x) &\text{in}\,\ \Tt^d.
                \end{cases} 
                \end{equation}
                As we will show next, this problem can be converted into a variational problem similar to the ones considered in \cite{DACOROGNA_GANGBO_diff} and \cite{DACOROGNA20151099}. As
in \cite{DACOROGNA_GANGBO_diff}, we denote by $\Lambda^k$ the set of differential $k$-forms over $\Rr^d$.
Using Poincar{\'e's}
lemma (see \cite[Theorem~1.22]{Csato2011ThePE}),  we deduce from the second equation in \eqref{MFGs-d>1}   that there exist a differential $(d-1)$-form, $\omega$,  and a vector function, $\textbf{a}:[0,T]\to\Rr^d$, such that
        \begin{equation}\label{higher}
        \begin{cases}
        m=\div \omega+1\\
        mD_{p}H(Du)=\omega_t+\textbf{a}(t).
        \end{cases}
        \end{equation}  
        From the second equation of the preceding system, we have
          \begin{equation}\label{int_ux-d>1}
        0=\int_{\Tt^d} Du\,\dx =\int_{\Tt^d}  D_vL\Big(\frac{ \omega_t+ \textbf{a}(t)}{\div \omega+1}\Big) \,\dx,
        \end{equation}
         where $L$ is the Legendre transform of $H$.
         
        Similarly to the one-dimensional case, it follows from
\eqref{higher} that the first equation in \eqref{MFGs-d>1} is equivalent to the following system of PDEs in terms of $\omega$ and $\textbf{a}$:
    \begin{equation}\label{phiEq0-d>1}
    \begin{split}
    &-\left( D_v\left( L\Big(\frac{ \omega_t+ \textbf{a}(t)}{\div \omega+1} \Big)\right)\right)_t + D\left( H\left(  D_v L\Big(\frac{ \omega_t+ \textbf{a}(t)}{\div \omega+1}\Big) \right)  \right)\\
&\quad+D(V)-D(\div \omega+1)Dg(\div \omega+1)=0.
    \end{split}
    \end{equation}
      Furthermore, arguing as in the proof of Proposition \ref{El1}, we deduce that \eqref{int_ux-d>1}--\eqref{phiEq0-d>1} is the system of Euler--Lagrange equations of  the functional 
       \begin{equation*}\label{varProL0-d>1}
       \int_{0}^{T}\!\!\!\int_{\mathbb{T}^d} \big(L_0(\omega_t+ \textbf{a}(t),\div \omega+1)-V\div \omega+G( \div \omega+1)\big)\,\dx\dt,
       \end{equation*}
       where $G$ and $L_0$  are corresponding extensions of \eqref{G-def} and \eqref{defLTilde} in the higher-dimensions.
        
        To complete the variational formulation of \eqref{MFGs-d>1}, it remains to impose the boundary conditions. In particular, from the first equation in \eqref{higher} and  the boundary conditions  in \eqref{MFGs-d>1}, it follows that we need to impose  conditions on $\div \omega$. Therefore,  we use Definition~2.2 in  \cite{DACOROGNA_GANGBO_diff}; that is,
        \begin{equation}\label{weak-bounday-d>1}
\int_{0}^{T}\!\!\!\int_{\Tt^d}(h_t,\div(\omega))+(\delta h,\omega_t)\,\dx\dt=\int_{\Tt^d}(h(T,x),m_T-1)-( h(0,x),m_0-1)\,\dx,
        \end{equation}
        where $\delta$ is the codifferential of the $(d-1)$-form $\omega_t$. Thus, we derive  the following variational problem
            \begin{equation}\label{mind>1}
        \inf\limits_{(\omega, \textbf{a})\in\mathcal{H}^p} \int_{0}^{T}\!\!\!\int_{\mathbb{T}^d} \big(L_0(\omega_t+ \textbf{a}(t),\div \omega+1)-V\div \omega+G( \div \omega+1)\big)\,\dx\dt,
         \end{equation}
         where $\mathcal{H}^p=\{\omega \in L^p([0,T]\times\Tt^d;\Lambda^{d-1}): \div(\omega)+1\geq 0, \,\, \omega \,\ \text{satisfies}\,\ \eqref{weak-bounday-d>1} \}\times L^p([0,T];\Rr^d)$.
        The problem \eqref{mind>1} is examined in detail  in  \cite{DACOROGNA20151099},\cite{DACOROGNA_GANGBO_diff}. The existence of solutions follows from  Proposition~3.1 in \cite{DACOROGNA_GANGBO_diff},  while uniqueness can be proven as in Proposition~\ref{uniq1} (see Section~\ref{sec2}).

        \section{Assumptions}
        \label{assumptions}
        This section presents the assumptions on the problem data that we need to establish our results.
        The first two assumptions concern the coupling function, $g$. We use the
first of these assumptions  to prove the uniqueness of a minimizer.
The second one ensures the convexity and coercivity of our variational problem, enabling the proof of the existence of a minimizer.        \begin{hyp}\label{assumtionOnG2} The function $G$  is  strictly convex.
        \end{hyp}

        \begin{hyp}\label{assumtionOnG1} The function $G$ is convex  and there exist constants, $\gamma>1$ and  $C>0$, such that \(G\)   satisfies the  growth condition 
                \begin{equation*}
                G(z)\geq C |z|^\gamma-C\enspace \text{ for all }z\in\Rr.
                \end{equation*}
        \end{hyp}
        
        Next, we impose a lower bound on the initial and terminal densities, $m_0$ and $m_T$.  
        \begin{hyp}\label{assumtionOnBoundsA} There exists a positive constant, $k_0$, such that
                \begin{equation*} m_0(x),\, m_T(x)\geq k_0>0 \enspace \text{ for all } x\in \Tt.
                \end{equation*} 
        \end{hyp}  
        
        Finally, we state an  assumption on the Hamiltonian, $H$, through a standard growth condition on the  Lagrangian, $L$. 
        \begin{hyp}\label{assumtionOnH2} There exist constants, $\beta >1$ and  $C>0$, such that   $L$ satisfies the  growth condition 
                \begin{equation*}
                L(w)\geq C|w|^\beta\enspace \text{ for all }w\in\Rr.
                \end{equation*}
        \end{hyp}
        
%
        \begin{remark} Given \(\alpha>1\), the Hamiltonian
                \begin{equation*}
                H(p)=(1+p^2)^\frac{\alpha}{2},\quad p\in\Rr
                \end{equation*}
                satisfies both Assumption \ref{assumtionOnH2} and the conditions in the statement of Problem~\ref{planningSG}.
        \end{remark}

        \section{Notation and Preliminaries}\label{preli}
        We begin this section  by introducing a suitable notation that leads to a more concise formulation of our problems. Then, we explore the properties of the function $L_0$ defined in \eqref{defLTilde} that  are crucial for studying our variational problems. Finally, we establish the convexity and lower semi-continuity of some  functions that we use in the sequel. 

Set 
        \begin{equation}
        \label{defL}
        \begin{split}
        L_1(q,z,y)=L_0(z+q,y),\\
        L_2(q,z,y, \theta)=L_0(z+q-\theta,y).
        \end{split}
        \end{equation}
                Using this notation, the functional in \eqref{prof} can be written as 
                \begin{equation}\label{varProbNoj}
                \mathcal{I}_1[\varphi,q]= \int_{0}^{T}\!\!\!\int_{\mathbb{T}}\big(L_1(q,\varphi_t,\varphi_x+1)-V\varphi_{x}+G( \varphi_x+1)\big)\,\dx\dt
                \end{equation}
         for   $\lambda=0$, and as                 \begin{equation}\label{varProbNoj2}
                \mathcal{I}_2[\varphi,q]= \int_{0}^{T}\!\!\!\int_{\mathbb{T}}\big(L_2(q,\varphi_t, \varphi_x+1,\varphi_{xx})-V\varphi_{x}+G(
\varphi_x+1)\big)\,\dx\dt
                \end{equation}
for $\lambda=1$.

Next, we prove some properties of $L_0$, $L_1$,  and $L_2$. Then, relying on these properties, we show that the functionals in \eqref{varProbNoj} and \eqref{varProbNoj2} are sequentially weakly lower semi-continuous (see Lemma~\ref{profunc} below).
        
        \begin{lem}\label{propConvexLtil}The function $L_0$ in \eqref{defLTilde} is convex in $\Rr\times\Rr_0^+$.
        \end{lem}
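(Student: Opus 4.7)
The plan is to recognize $L_0$ as the classical perspective transform of $L$ and verify the defining inequality
\begin{equation*}
L_0(tP_1 + (1-t)P_2) \leq t L_0(P_1) + (1-t) L_0(P_2),
\end{equation*}
for $P_i = (z_i, y_i) \in \Rr \times \Rr_0^+$ and $t \in [0,1]$, via a case analysis on the positivity of $y_1$ and $y_2$.

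In the main case $y_1, y_2 > 0$, I would set $Y = t y_1 + (1-t) y_2 > 0$ and introduce the auxiliary weight $s = t y_1 / Y \in [0,1]$, for which $1-s = (1-t) y_2 / Y$. A direct computation yields the identity
\begin{equation*}
\frac{t z_1 + (1-t) z_2}{Y} = s \frac{z_1}{y_1} + (1-s) \frac{z_2}{y_2},
\end{equation*}
and applying the convexity of $L$ (noted in the paper as a consequence of $H \in C^2(\Rr)$ being strictly convex) to the right-hand side, then multiplying through by $Y$, gives the required inequality since $sY = t y_1$ and $(1-s) Y = (1-t) y_2$.

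For the mixed boundary case $y_1 = 0 < y_2$ (the symmetric case being analogous), I would argue as follows: if $z_1 = 0$ then $L_0(z_1, y_1) = 0$ and the inequality reduces to $L_0((1-t) z_2, (1-t) y_2) = (1-t) L_0(z_2, y_2)$, which is immediate from the definition for $t<1$ and trivial for $t=1$. If instead $z_1 \neq 0$ then $L_0(z_1, y_1) = +\infty$, so the right-hand side equals $+\infty$ for $t > 0$ and the inequality is trivial, while $t = 0$ gives equality. The remaining boundary case $y_1 = y_2 = 0$ is handled similarly: either both $z_i$ vanish, giving $0 \leq 0$, or at least one is nonzero, so the right-hand side is $+\infty$.

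The main obstacle is merely careful bookkeeping of the boundary behavior at $y = 0$; the only substantive step is the classical perspective-function argument in the interior, which reduces the joint convexity of $L_0$ to the scalar convexity of $L$.
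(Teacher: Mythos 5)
Your proof is correct and follows essentially the same structure as the paper's: reduce to the interior case $y_1,y_2>0$ via the perspective-function argument, then handle the boundary cases $y_i=0$ by direct inspection. The only difference is cosmetic: you prove the interior-case convexity directly via the weight substitution $s = t y_1 / Y$, whereas the paper simply cites Lemma~2 of \cite{DaMa08} for this step.
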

        \begin{proof} As we mentioned before (see \eqref{L}), the function
  $L$   is well defined and convex. Furthermore, because $L_0(z,y)=L\big( \tfrac{z}{y}\big)y$ for all \(z\in\Rr\) and  $y>0$, $L_0$ is convex in $\Rr\times\Rr^+$ (see \cite[Lemma 2]{DaMa08}). 

It remains to prove that given \(\lambda \in (0,1)\), \(z_1\in\Rr\), and \(z_2\in\Rr\), the inequality 
                \begin{equation}\label{convexityL}
                L_0(\lambda (z_1,y_1)+(1-\lambda)(z_2,y_2))\leq \lambda L_0 (z_1,y_1)+(1-\lambda)L_0(z_2,y_2)
                \end{equation}
                holds if either  $y_1=0$ or $y_2=0$. 
Assume that $y_1=0$. Then, if     $z_1\neq0$ or  $y_2=0$ with $z_2\neq0$, the right-hand side of \eqref{convexityL} equals $+\infty$; thus,   the inequality \eqref{convexityL} holds in both cases. If $z_1=0$ and $(z_2, y_2)=(0,0)$, then \eqref{convexityL} becomes $0\leq0$, while  if $z_1=0$, $y_2\neq0$, and $z_2\in\Rr$, then  \eqref{convexityL} reduces to $L\big( \frac{z_2}{y_2}\big)(1-\lambda)y_2\leq(1-\lambda)L\big( \frac{z_2}{y_2}\big)y_2$;
                thus,   
inequality \eqref{convexityL} also holds in these two cases. Finally, we note that the \(y_2=0\) case can be treated as the $y_1=0$ case.   
        \end{proof}
        \begin{cor}\label{propConvexL}  The functions $L_1$ and $L_2$ defined in \eqref{defL} are convex in $\Rr^{2}\times\Rr_0^+$ and $\Rr^{2}\times\Rr_0^+\times\Rr$, respectively.
        \end{cor}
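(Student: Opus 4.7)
The plan is to view $L_1$ and $L_2$ as compositions of $L_0$ with affine maps, and then invoke the general fact that the composition of a convex function with an affine map is again convex.

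Concretely, I would first define the affine maps $T_1:\Rr^{2}\times\Rr_0^+\to\Rr\times\Rr_0^+$ by $T_1(q,z,y)=(z+q,y)$ and $T_2:\Rr^{2}\times\Rr_0^+\times\Rr\to\Rr\times\Rr_0^+$ by $T_2(q,z,y,\theta)=(z+q-\theta,y)$, so that by \eqref{defL} one has $L_1=L_0\circ T_1$ and $L_2=L_0\circ T_2$. I would note that both maps indeed send their domains into the domain $\Rr\times\Rr_0^+$ of $L_0$ because the $y$-coordinate is left untouched, and that the $y$-coordinate of a convex combination of points with $y\geq0$ remains nonnegative, so convex combinations in $\Rr^{2}\times\Rr_0^+$ (resp.\ in $\Rr^{2}\times\Rr_0^+\times\Rr$) stay in the domain of $L_1$ (resp.\ $L_2$).

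I would then carry out the standard one-line verification: for $\lambda\in[0,1]$ and two admissible points $P_1,P_2$, affinity of $T_i$ gives $T_i(\lambda P_1+(1-\lambda)P_2)=\lambda T_i(P_1)+(1-\lambda)T_i(P_2)$, so that by Lemma~\ref{propConvexLtil},
\begin{equation*}
L_i(\lambda P_1+(1-\lambda)P_2)=L_0\bigl(\lambda T_i(P_1)+(1-\lambda)T_i(P_2)\bigr)\leq \lambda L_0(T_i(P_1))+(1-\lambda)L_0(T_i(P_2))=\lambda L_i(P_1)+(1-\lambda)L_i(P_2),
\end{equation*}
for $i\in\{1,2\}$.

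There is essentially no obstacle here since Lemma~\ref{propConvexLtil} has already handled the delicate part, namely the extension of $L_0$ to the boundary $y=0$ with the convention in \eqref{defLTilde}. The only mild point worth mentioning is to confirm that convex combinations remain inside the (non-open) domain $\Rr_0^+$ in the $y$-variable, which is immediate.
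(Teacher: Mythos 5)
Your proposal is correct and is essentially the same argument as the paper's: the paper phrases the key observation as ``if $f$ is convex then $(\mathbf{x},s)\mapsto f(\mathbf{x}+s)$ is convex,'' which is exactly your composition-with-an-affine-map lemma specialized to $T_1$ and $T_2$, combined with Lemma~\ref{propConvexLtil}.
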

\begin{proof}
 We first  observe that if    $f:\Rr^n\to \Rr$ is a convex function, then the function \(h\)  defined for \((\textbf{x},s)\in \Rr^{n+1}\) by $h(\textbf{x},s)=f(\textbf{x}+s)$ is convex in $\Rr^{n+1}$. Thus, the claim follows from this observation combined with  Lemma~\ref{propConvexLtil}.
\end{proof}
        \begin{lem}\label{proLowL} Suppose that Assumption \ref{assumtionOnH2} holds for some \(\beta>1\). Then,  the functions $L_1$ and $L_2$ defined in \eqref{defL} are lower semi-continuous in  $\Rr^{2}\times\Rr_0^+$ and $\Rr^{2}\times\Rr_0^+\times\Rr$, respectively.
        \end{lem}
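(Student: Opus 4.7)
The plan is to reduce the lower semi-continuity of $L_1$ and $L_2$ to that of $L_0$, and then establish the latter by a short case analysis on its domain $\Rr\times\Rr_0^+$.

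First, observe that $L_1(q,z,y) = L_0(\Phi_1(q,z,y))$ with the continuous (affine) map $\Phi_1(q,z,y) = (z+q, y)$, and similarly $L_2(q,z,y,\theta) = L_0(\Phi_2(q,z,y,\theta))$ with $\Phi_2(q,z,y,\theta) = (z+q-\theta, y)$. Both $\Phi_1$ and $\Phi_2$ send $\Rr^{2}\times\Rr_0^+$ (respectively $\Rr^{2}\times\Rr_0^+\times\Rr$) continuously into $\Rr\times\Rr_0^+$, and lower semi-continuity is preserved under precomposition with continuous maps (closed sublevel sets pull back to closed sets). Hence it suffices to show that $L_0$ is lower semi-continuous on $\Rr\times\Rr_0^+$.

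On the open half-plane $\Rr\times\Rr^+$, I would invoke the fact that $L:\Rr\to\Rr$ is finite-valued and convex, hence continuous on all of $\Rr$; thus $L_0(z,y) = y\,L(z/y)$ is continuous on $\Rr\times\Rr^+$, which yields lower semi-continuity there. The only task left is to verify the semi-continuity inequality at boundary points $(z_0, 0)$, where I split into two cases. When $(z_0, 0) = (0,0)$, Assumption~\ref{assumtionOnH2} gives $L \geq 0$, so $L_0 \geq 0$ on its entire domain, and for any sequence $(z_n, y_n) \to (0,0)$ I conclude $\liminf L_0(z_n, y_n) \geq 0 = L_0(0,0)$. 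When $(z_0, 0)$ has $z_0 \neq 0$ the target value is $+\infty$, and I would argue as follows: along any sequence $(z_n, y_n) \to (z_0, 0)$, indices with $y_n = 0$ eventually have $z_n \neq 0$ and so contribute $L_0(z_n,y_n)=+\infty$ by definition, while for indices with $y_n > 0$, Assumption~\ref{assumtionOnH2} yields
\[
L_0(z_n,y_n) = y_n\, L(z_n/y_n) \geq C\,|z_n|^\beta\, y_n^{1-\beta},
\]
and the right-hand side diverges to $+\infty$ since $\beta>1$, $y_n \to 0^+$, and $|z_n|\to |z_0|>0$.

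I anticipate no substantive obstacle. The one place where the hypothesis $\beta>1$ (and not merely coercivity of $L$) is genuinely used is the blow-up step at boundary points $(z_0,0)$ with $z_0\neq 0$, which is precisely where the growth estimate on $L$ earns its keep; the case $(0,0)$ only needs $L\geq 0$, which also follows from the same growth bound.
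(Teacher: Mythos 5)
Your proof is correct and follows essentially the same strategy as the paper's: use convexity/continuity to handle the interior $\Rr\times\Rr^+$, and the growth bound from Assumption~\ref{assumtionOnH2} together with the case split on $z_0=0$ versus $z_0\neq 0$ (and on whether $y_n$ vanishes) to handle the boundary $y=0$. The one organizational difference is that you first reduce $L_1$ and $L_2$ to $L_0$ via precomposition with the affine maps $\Phi_1,\Phi_2$, which cleanly treats both functions at once, whereas the paper argues directly for $L_1$ (invoking lower semi-continuity of convex functions on the interior of the effective domain rather than continuity of $L$) and then remarks that $L_2$ is similar; this is a cosmetic rather than substantive distinction.
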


        \begin{proof} Here, we  only prove the lower semi-continuity of  $L_1$. The lower semi-continuity of  $L_2$ can be handled similarly. Because $L_1$ is convex, $L_1$  is lower semi-continuous in the interior of its effective domain. Thus, it remains to prove that for any $(q,z)\in\Rr^{2}$  and for any sequence  $(q_n,z_n,y_n)\in\Rr^{2}\times\Rr_0^+$ with  $(q_n,z_n,y_n)\to(q,z,0)$, we have
                \begin{equation}\label{lowSemLEq}
                L_1(q,z,0)\leq\liminf\limits_{n\to\infty} L_1(q_n,z_n,y_n).
                \end{equation}

Suppose that  $z+q\neq0$,
                and let $S=\{n\in\Nn\!:\,y_n\neq0\}$ be the set of natural numbers defining 
the non-zero terms of the sequence $\{y_n\}$.                
                 We first note that \(z_n + q_n \not=0\) for all $n\in\Nn$ sufficiently
large. Moreover, if $S$ has finite cardinality, then  \eqref{lowSemLEq}
holds because  
                $L_1(q_n,z_n,y_n)=+\infty$ for all $n\in\Nn$ sufficiently large.  Otherwise, if $S$ has infinite cardinality, then, using Assumption~\ref{assumtionOnH2}, we have                           \begin{equation*}
                \begin{split}
                \liminf\limits_{n\to\infty} L_1(q_n,z_n,y_n)=\liminf\limits_{\substack{n\to\infty\\n\in
S}} L\big( \tfrac{z_n+q_n}{y_n}\big) y_n\geq&\lim\limits_{\substack{n\to\infty\\n\in S}}C\frac{|z_n+q_n|^\beta}{y_n^{\beta-1}} =+\infty.
                \end{split}
                \end{equation*}
                Therefore, we deduce that \eqref{lowSemLEq} holds in the $z+q\neq0$ case. Finally, if $z+q=0$, it is enough to prove that $ L_1(q_n,z_n,y_n)\geq 0$ for all $n\in\Nn$ sufficiently
large, which  follows from the definition of \(L_1\) and Assumption~\ref{assumtionOnH2}.
        \end{proof}
        
        \begin{lem}\label{profunc} Suppose that  Assumptions \ref{assumtionOnG1} and \ref{assumtionOnH2} hold  for some  \(\gamma>1\) and \(\beta>1\), respectively. Then, the functionals 
                \begin{equation}\label{profunc1}
                (v_1,v_2,v_3)\mapsto \int_{0}^{T}\!\!\!\int_{\mathbb{T}}\big(L_1(v_1,v_2, v_3)-V(v_3-1)+G( v_3)\big)\,\dx\dt
                \end{equation}  
                and 
                \begin{equation}\label{profunc2}
                (v_1,v_2,v_3,v_4)\mapsto \int_{0}^{T}\!\!\!\int_{\mathbb{T}}\big(L_2(v_1,v_2,v_3, v_4)-V(v_3-1)+G( v_3)\big)\,\dx\dt
                \end{equation}          
                are sequentially lower semi-continuous with respect to the weak convergence in 
                $L^1([0,T]\times \Tt)^2\times L^1([0,T]\times \Tt;\Rr_0^+)$ and in $L^1([0,T]\times \Tt)^2\times  L^1([0,T]\times \Tt;\Rr_0^+) \times L^1([0,T]\times \Tt)$, respectively.
        \end{lem}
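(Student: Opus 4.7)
The plan is to recognize the integrands in \eqref{profunc1}--\eqref{profunc2} as normal convex integrands (convex and lower semi-continuous in the state variables, measurable in $(t,x)$) and to invoke a classical lower semi-continuity theorem for integral functionals of convex type under weak $L^1$ convergence. The required joint convexity is already provided by Corollary~\ref{propConvexL} and Assumption~\ref{assumtionOnG1}, and the required joint lower semi-continuity by Lemma~\ref{proLowL} together with the continuity of $V$ and $G$; the remaining work is to package these ingredients with a suitable uniform lower bound so that the standard theorem applies.

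\textbf{Key steps for \eqref{profunc1}.} Define
\[
f(t,x;v_1,v_2,v_3)=L_1(v_1,v_2,v_3)-V(x)(v_3-1)+G(v_3),
\]
extended by $+\infty$ outside $\Rr^2\times\Rr_0^+$ in the $v$-variables. I would verify the following four properties. (i) $f$ is jointly convex in $(v_1,v_2,v_3)$, since $L_1$ is convex by Corollary~\ref{propConvexL}, $G$ is convex by Assumption~\ref{assumtionOnG1}, and $v_3\mapsto -V(x)(v_3-1)$ is affine. (ii) $f$ is jointly lower semi-continuous in $(v_1,v_2,v_3)$, from Lemma~\ref{proLowL} combined with the continuity of $V$ and $G$. (iii) The $(t,x)$-dependence enters only through $V\in C^1(\Tt)$, so $f$ is a Carath\'eodory and hence normal integrand. (iv) $f$ is bounded below by an integrable function on $[0,T]\times\Tt$: indeed, $L_1\geq 0$ by Assumption~\ref{assumtionOnH2}, $G(z)\geq C|z|^\gamma-C$ with $\gamma>1$ by Assumption~\ref{assumtionOnG1}, and $V$ is bounded on $\Tt$, so Young's inequality absorbs the linear-in-$v_3$ term $V(x)v_3$ into $G(v_3)$ and yields $f\geq -C'$ for a constant $C'$ independent of $(t,x)$. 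With these four properties in hand, the classical Ioffe-type lower semi-continuity theorem for integrals of normal convex integrands delivers the weak $L^1$ sequential lower semi-continuity of the functional in \eqref{profunc1}.

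\textbf{Extension to \eqref{profunc2} and the main obstacle.} For the functional in \eqref{profunc2}, the argument is identical once an extra variable $v_4\in\Rr$ is added: by Corollary~\ref{propConvexL} and Lemma~\ref{proLowL}, $L_2$ is convex and jointly lower semi-continuous in $(v_1,v_2,v_3,v_4)$, while the additional variable affects neither the lower bound nor the normality of the integrand, so the same theorem applies verbatim. The only delicate point is that the integrand can equal $+\infty$ on the hyperplane $\{v_3=0\}$; however, since the definition \eqref{defLTilde} already encodes this via $L_0(z,0)=+\infty$ for $z\neq 0$, $f$ is a proper normal convex integrand on the whole Euclidean space, which is precisely the setting in which the Ioffe-type theorem applies without modification.
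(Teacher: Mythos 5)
Your proof is correct, and the overall strategy (recognize the integrand as a convex, lower semi-continuous normal integrand, establish a uniform lower bound, and invoke a classical weak-$L^1$ lower semi-continuity theorem) matches the paper's. The one genuine difference is in how the $V$-term is handled. The paper first \emph{splits off} the map $v_3\mapsto \int_0^T\!\!\int_{\Tt} V(v_3-1)\,\dx\dt$ and observes that, since $V\in C^1(\Tt)$, this is \emph{continuous} under weak $L^1$ convergence; what remains is the $(t,x)$-independent integrand $L_1(\cdot,\cdot,\cdot)+G(\cdot)$, which is non-negative up to an additive constant, and the paper then applies Theorem~5.14 of Fonseca--Leoni to that piece alone. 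You instead keep the affine $V$-term \emph{inside} the integrand, check that the resulting $f$ is a Carath\'eodory normal integrand (jointly convex and lsc in the state variables, measurable in $(t,x)$), and recover a uniform lower bound by Young's inequality, absorbing $-V(x)v_3$ into the coercive term $C|v_3|^\gamma$ from Assumption~\ref{assumtionOnG1}. Both packagings feed the same classical theorem and are equally rigorous. The paper's split has the small advantage that the lsc theorem is applied to an autonomous (i.e.\ $(t,x)$-independent) integrand with an immediate constant lower bound, avoiding the need for the Carath\'eodory extension and the Young's-inequality step; yours has the small advantage of being a single, unified application of the theorem without separating out a continuous part. Your treatment of \eqref{profunc2} and of the $+\infty$ values on $\{v_3=0\}$ is also consistent with the definitions in \eqref{defLTilde}--\eqref{defL} and with the paper's argument.
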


        \begin{proof} We only prove the sequential lower semi-continuity of the functional in \eqref{profunc1}. The functional in \eqref{profunc2} can be treated similarly. 

First, we notice that the  functional
                \begin{equation*}
                v_3\mapsto\int_{0}^{T}\!\!\!\int_{\mathbb{T}}V(v_3  -1 )\,\dx\dt
                \end{equation*} 
                is continuous with respect to the weak convergence in $L^1([0,T]\times \Tt)$ because  $V\in C^1(\Tt)$. Thus, to conclude, it suffices  to prove that the functional
                \begin{equation}\label{funcNoV}
                (v_1,v_2,v_3)\mapsto\int_{0}^{T}\!\!\!\int_{\mathbb{T}}\big( L_1(v_1,v_2,v_3)+G( v_3)\big)\,\dx\dt
                \end{equation}
                is sequentially lower semi-continuous with respect to the weak convergence in 
                $L^1([0,T]\times \Tt)^2\times L^1([0,T]\times \Tt;\Rr_0^+)$. Because  \(G(\cdot)\geq -C\) by Assumption
\ref{assumtionOnG1} and   $L_1(\cdot,\cdot,\cdot)\geq 0$, we have
                \begin{equation}\label{funcBoBelow}
                L_1(z_1,z_2,z_3)+G( z_3)\geq-C\enspace \text{ for all } z=(z_1,z_2,z_3)\in \Rr^2 \times \Rr^+_0.
                \end{equation}
                Moreover, from Corollary \ref{propConvexL} and Lemma~\ref{proLowL}, we deduce that the function
                $(z_1,z_2,z_3) \mapsto L_1(z_1,z_2,\allowbreak z_3-1)+G( z_3)$ is lower semi-continuous and convex in $\Rr^2 \times \Rr^+_0$. These two properties and the lower bound in \eqref{funcBoBelow} enable us to use Theorem~5.14 in \cite{FoLe07}  to conclude   the lower semi-continuity of the functional in \eqref{funcNoV} stated above, which concludes the proof of the lemma.
        \end{proof}

        \section{The Variational Approach  For The First-Order Planning Problem}
        \label{sec2}
        Here, we focus on the variational problem for the first-order case; that is, $\lambda=0$ in Problem~\ref{problem}. The approach considered here is different from the one that we described in Subsection ~\ref{hrem}.   In higher dimensions (see Subsection ~\ref{hrem}), we set the boundary conditions in a weak sense (see \eqref{weak-bounday-d>1}) because of lack of regularity. Here, however, we are in the one-dimensional case, where we can impose boundary conditions exactly.
        
        We begin by supplementing the discussion in Section~\ref{sec1} by examining the boundary conditions in \eqref{boundaryP}. We then introduce the precise functional setting for this variational problem. Finally, using the direct method in the calculus of variations, we prove the existence of a solution. We also establish the uniqueness of the solution. 
        
        Using \eqref{boundaryP},  the first equation in \eqref{um_pi}, and
the average condition
\begin{equation}\label{phi_int_0}
\int_{\Tt}\varphi(t,x)\,\dx=0 \enspace \text{ for all \(t\in[0,T]\)}
        \end{equation}
 derived in Section~\ref{sec1}, we conclude that
\begin{equation}\label{eq:ffi0T}
\begin{aligned}
\varphi(0,x)=\int_{0}^{x}(m_0(\tau)-1)\,\d \tau - \iota_0 \enspace \text{ and } \enspace \varphi(T,x)=\int_{0}^{x}(m_T(\tau)-1)\,\d
\tau - \iota_T, 
\end{aligned}
\end{equation}
where 
\begin{equation}\label{eq:iotas}
\begin{aligned}
\iota_0 =\int_{\Tt}\int_{0}^{x}(m_0(\tau)-1)\,\d \tau\dx \enspace \text{
and } \enspace  \iota_T = \int_{\Tt}\int_{0}^{x}(m_T(\tau)-1)\,\d \tau\dx.
\end{aligned}
\end{equation}

        Thus, for $p>1$, we introduce the set
                \begin{equation}\label{def-A_0}
         \begin{split}
         \mathcal{A}_{0}^p= &\left\{\varphi\in W^{1,p}([0,T]\times\Tt) :\,\varphi_x+1\geq 0;\,\, \int_{\Tt}\varphi(t,x)\,\dx=0;\right. \\  &\quad\quad\quad\quad\left.  \varphi(0,x)=\int_{0}^{x}\big(m_0(\tau)-1\big)\,\d \tau- \iota_0;\,\,\varphi(T,x)=\int_{0}^{x}\big(m_T(\tau)-1\big)\,\d \tau- \iota_T\right\}
         \end{split}
         \end{equation}
         and note that the admissible set $\mathcal{B}_{0}^p$ in \eqref{B00} can be written as
                \begin{equation*}
         \begin{split}
         \mathcal{B}_{0}^p=\mathcal{A}_{0}^p\times L^p([0,T]).
         \end{split}
         \end{equation*}
        
Recalling the notation in \eqref{defL}, this analysis leads us to the variational problem        \begin{equation}\label{varProbphii}
        \min\limits_{(\varphi,q)\in\mathcal{B}_{0}^p}\mathcal{I}_1[\varphi,q]= \min\limits_{(\varphi,q)\in \mathcal{B}_{0}^p} \int_{0}^{T}\!\!\!\int_{\mathbb{T}}\big(L_1(q,\varphi_t,\varphi_x+1)-V\varphi_x+ G( \varphi_x+1)\big)\,\dx\dt,
        \end{equation}
which corresponds  to Problem~\ref{problem} with  $\lambda=0$.         
        The next proposition proves the existence of a solution to this minimization  problem.        \begin{pro}\label{ex1}Suppose that Assumptions~\ref{assumtionOnG1}--\ref{assumtionOnH2} hold  for some \(\beta>1\) and \(\gamma>1\) (see Section~\ref{assumptions}). Then, the variational problem in \eqref{varProbphii} with \(p=\frac{\beta\gamma}{\beta+\gamma-1}  \) has a solution $(\varphi,q)\in \mathcal{B}_{0}^ \frac{\beta\gamma}{\beta+\gamma-1}$.
        \end{pro}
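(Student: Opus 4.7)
The plan is to prove existence by the direct method of the calculus of variations, using the coercivity provided by Assumptions~\ref{assumtionOnG1} and~\ref{assumtionOnH2} together with the weak lower semi-continuity provided by Lemma~\ref{profunc}.

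First, I would verify that the functional is bounded below on $\mathcal{B}_0^p$: since $L_1 \geq 0$, $G(\cdot) \geq -C$, and $V\varphi_x$ is controlled by Hölder and the mean-zero/Poincaré inequality on $\mathbb{T}$, the infimum is finite. I then take a minimizing sequence $(\varphi_n,q_n)\subset \mathcal{B}_0^p$ and derive bounds. The coercivity of $G$ gives $\|\varphi_{n,x}+1\|_{L^\gamma([0,T]\times\mathbb{T})}\leq C$, hence $\varphi_{n,x}$ is bounded in $L^\gamma\subset L^p$ (note $p=\tfrac{\beta\gamma}{\beta+\gamma-1}\leq \gamma$). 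The key step is to extract an $L^p$-bound on $\varphi_{n,t}+q_n$. Assumption~\ref{assumtionOnH2} yields
\begin{equation*}
L_0(\varphi_{n,t}+q_n,\varphi_{n,x}+1)\geq C\,\frac{|\varphi_{n,t}+q_n|^\beta}{(\varphi_{n,x}+1)^{\beta-1}}
\end{equation*}
on $\{\varphi_{n,x}+1>0\}$, and applying Hölder's inequality with exponents $\beta/p$ and $\beta/(\beta-p)$ in the factorisation
\begin{equation*}
|\varphi_{n,t}+q_n|^p=\frac{|\varphi_{n,t}+q_n|^p}{(\varphi_{n,x}+1)^{p(\beta-1)/\beta}}\cdot (\varphi_{n,x}+1)^{p(\beta-1)/\beta}
\end{equation*}
gives the desired $L^p$-bound, precisely because the choice $p=\tfrac{\beta\gamma}{\beta+\gamma-1}$ makes $\tfrac{p(\beta-1)}{\beta-p}=\gamma$. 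This is the main obstacle: selecting $p$ so that the nonstandard growth of $L_0$ (which is coercive in the ratio, not in the numerator alone) combines with the $L^\gamma$-bound on $\varphi_{n,x}+1$ to give separate control of $\varphi_{n,t}+q_n$ in $L^p$.

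Next, since $\int_\mathbb{T}\varphi_n(t,x)\,\dx=0$ for all $t$, differentiating yields $\int_\mathbb{T}\varphi_{n,t}\,\dx=0$, hence $q_n(t)=\int_\mathbb{T}(\varphi_{n,t}+q_n)\,\dx$. Jensen's inequality then bounds $q_n$ in $L^p([0,T])$, and consequently $\varphi_{n,t}=(\varphi_{n,t}+q_n)-q_n$ is bounded in $L^p$. Combined with the Poincaré inequality on $\mathbb{T}$ (using the zero average condition) this gives $\|\varphi_n\|_{W^{1,p}([0,T]\times\mathbb{T})}+\|q_n\|_{L^p([0,T])}\leq C$, allowing the extraction of a subsequence with $\varphi_n\rightharpoonup \varphi$ in $W^{1,p}$ and $q_n\rightharpoonup q$ in $L^p$.

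To verify $(\varphi,q)\in\mathcal{B}_0^{p}$, I would use that the convex constraint $\varphi_x+1\geq 0$ and the linear constraints $\int_\mathbb{T}\varphi\,\dx=0$ and the prescribed initial-terminal traces at $t=0,T$ are all preserved under weak $W^{1,p}$ convergence (the trace operator $W^{1,p}([0,T]\times\mathbb{T})\to L^p(\{0,T\}\times\mathbb{T})$ is continuous, and traces of a weakly convergent sequence converge weakly). Finally, since weak convergence in $L^p$ implies weak convergence in $L^1$ on the bounded domain, Lemma~\ref{profunc} applied to $(v_1,v_2,v_3)=(q_n,\varphi_{n,t},\varphi_{n,x}+1)$ yields
\begin{equation*}
\mathcal{I}_1[\varphi,q]\leq \liminf_{n\to\infty}\mathcal{I}_1[\varphi_n,q_n]=\inf_{\mathcal{B}_0^p}\mathcal{I}_1,
\end{equation*}
so $(\varphi,q)$ is the sought minimizer.
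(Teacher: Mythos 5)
Your proof is correct and mirrors the paper's argument: $L^\gamma$-control of $\varphi^n_x+1$ from Assumption~\ref{assumtionOnG1}, interpolation (you use H\"older, the paper uses Young --- equivalent here) to turn the ratio-coercivity $L_0(z,y)\geq C|z|^\beta/y^{\beta-1}$ into an $L^p$-bound on $\varphi^n_t+q_n$ precisely at $p=\beta\gamma/(\beta+\gamma-1)$, then Jensen with the mean-zero constraint to split off $q_n$, Poincar\'e--Wirtinger for the full $W^{1,p}$-bound, and Lemma~\ref{profunc} for sequential weak lower semicontinuity. Two small points to tighten: the bound on $\int_0^T\!\!\int_{\Tt} V\varphi_x\,\dx\dt$ in the boundedness-below step follows directly from the constraint $\varphi_x+1\geq 0$ (since $|\varphi_x|\leq (\varphi_x+1)+1$ and $\int_{\Tt}(\varphi_x+1)\,\dx=1$), not from Poincar\'e; and one should also verify that the infimum is $<+\infty$, which the paper does by evaluating $\mathcal{I}_1$ at the affine-in-time interpolation $\varphi^0$ between the prescribed initial and terminal data, with $q^0=0$.
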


\begin{proof}
Let
                $\sigma=\frac{\beta\gamma}{\beta+\gamma-1}$, and note that \(\sigma>1\). First, we prove
that there exist positive constants, $c$ and $C$, such that 
                \begin{equation}\label{funcBound}
                -c\leq\inf\limits_{\substack{(\varphi,q)\in \mathcal{B}_0^\sigma}}\mathcal{I}_1[\varphi,q]\leq
C.
                \end{equation}

To prove the lower bound in   \eqref{funcBound}, 
we take  any \((\varphi,q)\in \mathcal{B}_0^\sigma\) and   observe that     
                because $\varphi_x+1\geq0$ and $V\in C^1(\Tt)$, we have
                \begin{equation}\label{ineqForLeft}
                \begin{split}
                \left| \int_{0}^{T}\!\!\!\int_{\mathbb{T}}V\varphi_x\,\dx\dt\right| &\leq\max\limits_{\Tt}|V| \int_{0}^{T}\!\!\! \int_{\mathbb{T}}|\varphi_x|\,\dx\dt \\&\leq\max\limits_{\Tt}|V| \int_{0}^{T}\!\!\!\int_{\mathbb{T}} \big((\varphi_x+1)+1\big)\,\dx\dt= 2T\max\limits_{\Tt}|V|.
                \end{split}
                \end{equation}
                Moreover,    the convexity of \(G\) and Jensen's
inequality yield  \(\int_{0}^{T}\!\!\!\int_{\mathbb{T}}
G( \varphi_x+1)\,\dx\dt \geq TG(1)\).     Then,  because $L_1\geq 0$ by Assumption~\ref{assumtionOnH2}
and the definition of \(L_1\) (see \eqref{defL} and \eqref{defLTilde}), we conclude that the lower bound in  \eqref{funcBound} holds with $c=T|2\max\limits_{\Tt}|V|-G(1)|$. 
                
                To prove the upper bound inequality in \eqref{funcBound}, we take  $q^0(t)=0$ and 
                \begin{equation}\label{phiTest}
                \varphi^0(t,x)=\frac{T-t}{T}\bigg(\int_{0}^{x}\big(m_0(\tau)-1\big)\,\d  \tau-\iota_0\bigg)+\frac{t}{T}\bigg(\int_{0}^{x}\big(m_T(\tau)-1\big)\,\d \tau -\iota_T\bigg),
                \end{equation}
                where \(i_0\) and \(i_T\) are the constants in \eqref{eq:iotas}. Note that $(\varphi^0,q^0)\in \mathcal{B}_0^\sigma$ with
\begin{equation*}
\begin{aligned}
&\varphi_t^0 (t,x)= -\frac{1}{T}\bigg(\int_{0}^{x}
\big(m_0(\tau)-m_T(\tau)\big)\,\d
 \tau-\iota_0 + \iota_T\bigg),  \\
 & \varphi_x^0(t,x) + 1= \frac{T-t}{T} m_0(x) + \frac{t}{T} m_T(x)
\end{aligned}
\end{equation*}
for all \((t,x) \in [0,T]\times \Tt\).                

By Assumption \ref{assumtionOnBoundsA}  and the smoothness of the initial-terminal densities, $m_0$ and $m_T$, there exist  positive constants, \(k_0\) and  $k_1$, such that
                \begin{equation}\label{boabove}
                k_0 \leq m_0(x),m_T(x)\leq k_1 \enspace \hbox{ for all } x\in\Tt.
                \end{equation} 
                Hence,
we have the following uniform bounds on  \( [0,T]\times \Tt\):
\begin{equation}\label{eq:bddffixt}
\begin{aligned}
\max_{[0 ,T]\times \Tt} |\varphi_t^0| \leq \frac{1}{T}\big( 2k_1 +\ 2T(k_1+1)\big) \enspace \text{ and } \enspace k_0 \leq \min_{[0 ,T]\times \Tt} (\varphi_x^0 
+1) \leq \max_{[0 ,T]\times \Tt} (\varphi_x^0 +1) \leq k_1.\end{aligned}
\end{equation}
Moreover, for \(k_2 = \tfrac{1}{T}( 2k_1 +\ 2T(k_1+1))\),
from  \eqref{boabove}--\eqref{eq:bddffixt},  the convexity of \(G\), and the definition of \(L_1\),  we deduce that
\begin{equation*}
\begin{aligned}
&\max_{[0 ,T]\times \Tt} G(\varphi^0_x+1)\leq \max_{z\in[k_0,k_1]} |G(z)|,\\
&\max_{[0 ,T]\times \Tt} L_1(0,\varphi^0_t,\varphi^0_x+1) = \max_{[0 ,T]\times \Tt} L\Big(\frac{\varphi^0_t}{\varphi^0_x+1}\Big)(\varphi^0_x+1) \leq k_1 \max_{w\in\big[0,\tfrac{k_2}{k_0}\big]} |L(w)|.
\end{aligned}
\end{equation*}
Thus, recalling \eqref{ineqForLeft},  the upper bound in \eqref{funcBound} holds with
\begin{equation*}
\begin{aligned}
C= k_1T
\max_{\big[0,\tfrac{k_2}{k_0}\big]} |L| +2T\max\limits_{\Tt}|V| +T\max_{[k_0,k_1]} |G|.  
\end{aligned}
\end{equation*}

Next, we prove the existence of a minimizer as stated in the proposition.                  
                Let $(\varphi^n,q_n)_{n\in\Nn}\subset\mathcal{B}_0^\sigma$ be a minimizing sequence for the variational problem \eqref{varProbphii} with \(p=\frac{\beta\gamma}{\beta+\gamma-1}
 \); that is,
                \begin{equation*}
                \lim\limits_{n\to\infty}\mathcal{I}_1[\varphi_n,q_n]=\inf\limits_{\substack{(\varphi,q)\in \mathcal{B}_0^\sigma}}\mathcal{I}_1[\varphi,q]=    \ell_1.
                \end{equation*}
From  \eqref{funcBound}, the non-negativity of \(L_1\), and Assumption~\ref{assumtionOnG1}, we conclude that there exists a positive constant, \(C_0\), independent of \(n\), such that
\begin{equation*}
\begin{aligned}
\int_{0}^{T}\!\!\!\int_{\Tt} \big(-V\varphi^n_x +C|\varphi^n_x+1|^{\gamma}-C\big)\,\dx\dt\leq C_0.
\end{aligned}
\end{equation*}
Hence,  H\"older's inequality  and the smoothness of \(V\) yield         %
\begin{equation}
\label{seqboundphix}
\begin{aligned}
\sup_{n\in\Nn}\int_{0}^{T}\!\!\!\int_{\Tt} |\varphi^n_x+1|^{\gamma}\,\dx\dt<+\infty.
\end{aligned}
\end{equation}

Using once more   \eqref{funcBound},  \eqref{seqboundphix}, the non-negativity  of \(z\mapsto CG(z)+C\) by Assumption~\ref{assumtionOnG1} and of \(L_1\),
 we may assume that \(C_0\) is such that
              \begin{equation}\label{seqboundLLL}
                \int_{0}^{T}\!\!\!\int_{\Tt}L_1(q_n,\varphi^n_t,\varphi^n_x)\,\dx\dt \leq C_0
                \end{equation}
for all \(n\in\Nn\). In particular, by the definition of \(L_1\), we have  $\varphi^n_x+1>0$
a.e.~in $U_n=\{(t,x)\in[0,T]\times\Tt:\varphi^n_t+q_n\neq0\}$. Moreover,
using Assumption~\ref{assumtionOnH2}, %
\begin{equation}\label{seqboundL}
\begin{aligned}
C_0 \geq \int_{0}^{T}\!\!\!\int_{\Tt}L_1(q_n,\varphi^n_t,\varphi^n_x)\,\dx\dt = \int\!\!\!\int_{U_n}L_1(q_n,\varphi^n_t,\varphi^n_x)\,\dx\dt \geq C 
\int\!\!\!\int_{U_n}\frac{|\varphi^n_t+q_n|^\beta}{|\varphi^n_x+1|^{\beta-1}}\,\dx\dt.
\end{aligned}
\end{equation}
On the other hand, by Young's inequality, taking into account that $\tfrac{\beta+\gamma-1}{\gamma}>1$,
 we obtain 
                \begin{equation*}
                \begin{split}
                &\int_{0}^{T}\!\!\!\int_{\Tt}|\varphi^n_t+q_n|^\sigma\,\dx\dt= \int\!\!\!\int_{U_n}|\varphi^n_t+q_n|^\sigma \,\dx\dt =\int\!\!\!\int_{U_n}\frac{|\varphi^n_t+q_n|^\sigma}{|\varphi^n_x+1|^{\frac{(\beta-1)\gamma}{\beta+\gamma-1}}} |\varphi^n_x+1|^{\frac{(\beta-1)\gamma}{\beta+\gamma-1}}\,\dx\dt\\& \quad\leq\frac{\gamma}{\beta+\gamma-1} \int\!\!\!\int_{U_n} \frac{|\varphi^n_t+q_n|^{\sigma\frac{\beta+\gamma-1}{\gamma}}}{|\varphi^n_x+1|^{\beta-1}}\,\dx\dt+\frac{\beta-1}{\beta+\gamma-1} \int_{0}^{T}\!\!\!\int_{\Tt}|\varphi^n_x+1|^\gamma\,\dx\dt.
                \end{split}
                \end{equation*}
From the preceding estimate, the identity  \(\sigma\tfrac{\beta+\gamma-1}{\gamma} = \beta\), and the uniform bounds in   \eqref{seqboundphix} and  \eqref{seqboundL}, we infer that 
\begin{equation}\label{seqboundphit}
\begin{aligned}
\sup_{n\in\Nn}\int_{0}^{T}\!\!\!\int_{\Tt} |\varphi^n_t+q_n|^\sigma\,\,\dx\dt<+\infty.
\end{aligned}
\end{equation}

Next, we observe that because $\varphi^n\in\mathcal{A}_0^\sigma$, \eqref{phi_int_0} holds with \(\varphi\) replaced by \(\varphi^n\);  thus, we have
                        \begin{equation}\label{derivph}
                \begin{split}
        \int_{\Tt}\varphi^n_t\,\dx= \frac{\partial}{\partial t} \int_{\Tt}\varphi^n\,\dx=0.
                \end{split}
                \end{equation}
                Since each $q_n$ depends only on $t$, by using Jensen's inequality, we get from  \eqref{derivph} and \eqref{seqboundphit} that 
                \begin{equation}\label{qLnorm}
                \begin{split}
                \sup\limits_{n\in\Nn} \int_0^T |q_n|^\sigma\,\dt&=\sup\limits_{n\in\Nn}\int_{0}^{T}\left| \int_{\Tt}q_n\,\dx\right|^\sigma\dt =\sup\limits_{n\in\Nn}\int_{0}^{T}\left| \int_{\Tt}\big(\varphi^n_t+q_n\big)\,\dx\right| ^\sigma\dt \\&\leq\sup\limits_{n\in\Nn} \int_{0}^{T}\!\!\!\int_{\Tt}|\varphi^n_t+q_n|^\sigma \,\dx\dt<+\infty.
                \end{split}
                \end{equation}
Hence, using  \eqref{seqboundphit}
and  \eqref{qLnorm}, we conclude that 
                \begin{equation}\label{phitrnorm}
               \sup\limits_{n\in\Nn}
\int_{0}^{T}\!\!\!\int_{\Tt}|\varphi^n_t|^\sigma \,\dx\dt<+\infty.
                \end{equation}

Finally, by Poincar{\'e}--Wirtinger's inequality and \eqref{phi_int_0}
 with \(\varphi\) replaced by \(\varphi^n\), and observing that \(\gamma>\sigma>1\), we deduce from \eqref{seqboundphix}
and \eqref{phitrnorm}
that \(\{\varphi_n\}_{n\in\Nn}\) is a bounded sequence in  $W^{1,\sigma}([0,T]\times\Tt)$. Thus, up to extracting of a subsequence if necessary, there
exist  $\bar{\varphi}\in W^{1,\sigma}([0,T]\times\Tt)$ and $\bar{q}\in L^\sigma([0,T])$ such that                        \begin{equation}\label{weaklyphi_L}
                \varphi^n\rightharpoonup\bar{\varphi}\quad \text{weakly
in} \,\ W^{1,\sigma}([0,T]\times\Tt)
                \end{equation}
 and                 
                        \begin{equation}\label{weaklyq}
                q^n\rightharpoonup\bar{q}\quad \text{weakly in} \,\ L^\sigma([0,T]).
                \end{equation}

Next, we show that  $\bar{\varphi}\in\mathcal{A}_0^\sigma$. 
                 By  Jensen's inequality and \eqref{phi_int_0}
 with \(\varphi\) replaced by \(\varphi^n\), we get
                \begin{equation*}
        \begin{split}
\int_{0}^{T}\left| \int_{\Tt}\bar\varphi\,\dx\right| ^\sigma\dt=\int_{0}^{T}\left|
\int_{\Tt}(\bar\varphi-\varphi^n)\,\dx\right|^\sigma\dt \leq \int_{0}^{T}\!\!\!\int_{\Tt}|\bar\varphi-\varphi^n|^\sigma\,\dx\dt.
        \end{split}
        \end{equation*}
                On the other hand, by the Rellich--Kondrachov theorem and \eqref{weaklyphi_L}, we have \(\varphi^n
\to \bar\varphi\)  in \(L^\sigma([0,T]\times \Tt)\), which together with the preceding estimate yields   $\int_{\Tt}\bar\varphi\,\dx=0$  a.e.~in $[0,T]$. Moreover, using the properties of weak convergence and the trace theorem in Sobolev spaces
(see, for example,  \cite[Theorem~1 in Section~5.5]{E6}), we conclude that \(\bar\varphi\)   also satisfies the remaining conditions defining the set \(\mathcal{A}_0^\sigma\). Thus, $\bar{\varphi}\in\mathcal{A}_0^\sigma$ and  $(\bar{\varphi},\bar{q})\in\mathcal{B}_0^\sigma$.

                Finally, by Lemma~\ref{profunc} with \(v_1=\bar q\), \(v_2=\bar \varphi_t\), and \(v_3=\bar\varphi_x +1\), we have from \eqref{weaklyphi_L}--\eqref{weaklyq} that
                \begin{equation*}
                \mathcal{I}_1[\bar{\varphi},\bar{q}]\leq\liminf\limits_{n\to\infty} \mathcal{I}_1[\varphi_n,q_n]=\ell_1\leq \mathcal{I}_1[\bar{\varphi},\bar{q}].
                \end{equation*} 
Hence,
 $(\bar{\varphi},\bar{q})\in\mathcal{B}_0^\sigma$ is a minimizer of \(\mathcal{I}_1\) on \(\mathcal{B}_0^\sigma\), which concludes the proof.                  
        \end{proof}
        
        Next, we prove the uniqueness of solutions of the variational problem in \eqref{varProbphii}.
        \begin{pro}\label{uniq1}Suppose that Assumptions \ref{assumtionOnG2}--\ref{assumtionOnH2}
hold  for some \(\beta>1\) and \(\gamma>1\) (see Section~\ref{assumptions}). Then, the variational problem in \eqref{varProbphii} has at most one solution.
        \end{pro}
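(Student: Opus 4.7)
\textbf{Proof plan for Proposition~\ref{uniq1}.} The strategy is the standard convexity argument: use strict convexity of the integrand (in the relevant directions) to force two minimizers to coincide. Suppose $(\varphi_1,q_1),(\varphi_2,q_2)\in\mathcal{B}_0^p$ are both minimizers of $\mathcal{I}_1$. Set $\tilde{\varphi}=\tfrac12(\varphi_1+\varphi_2)$ and $\tilde{q}=\tfrac12(q_1+q_2)$. Since $\mathcal{B}_0^p$ is convex (the density bound, zero mean, and initial/terminal traces are all preserved by convex combinations), $(\tilde\varphi,\tilde q)\in\mathcal{B}_0^p$. Because $L_1$ is convex by Corollary~\ref{propConvexL}, $G$ is convex, and $\varphi\mapsto\int\!\!\int V\varphi_x$ is linear, the functional $\mathcal{I}_1$ is convex, so $\mathcal{I}_1[\tilde\varphi,\tilde q]\leq\tfrac12(\mathcal{I}_1[\varphi_1,q_1]+\mathcal{I}_1[\varphi_2,q_2])$, with equality forced by the minimizing property.

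The equality in the convex combination forces equality in the pointwise convex inequalities defining each term of the integrand a.e. First, applying this equality to the $G$-term and using the strict convexity of $G$ (Assumption~\ref{assumtionOnG2}), I would conclude that $(\varphi_1)_x=(\varphi_2)_x$ a.e.~in $(0,T)\times\Tt$. Consequently $\varphi_1-\varphi_2$ depends only on $t$, and the zero-mean constraint $\int_\Tt(\varphi_1-\varphi_2)\,\dx=0$ then yields $\varphi_1=\varphi_2$ a.e.; denote this common function by $\varphi$.

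It remains to show $q_1=q_2$. Here I would exploit the strict convexity of $L$: because $H\in C^2(\Rr)$ is strictly convex, $H'$ is strictly increasing, hence so is $L'=(H')^{-1}$ by \eqref{L}, so $L$ is strictly convex. For $y>0$, the map $z\mapsto L_0(z,y)=yL(z/y)$ is therefore strictly convex in $z$. Applied to the $L_1$-term of the identity $\mathcal{I}_1[\tilde\varphi,\tilde q]=\tfrac12(\mathcal{I}_1[\varphi,q_1]+\mathcal{I}_1[\varphi,q_2])$, this gives $q_1(t)=q_2(t)$ for a.e.\ $(t,x)$ in the set $\{\varphi_x+1>0\}$. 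To promote this to $q_1=q_2$ in $L^p([0,T])$, I would use that $\varphi_x+1\geq 0$ together with the mean-zero condition on $\varphi$, which yields $\int_\Tt(\varphi_x+1)\,\dx=1$ for a.e.\ $t$; in particular, for a.e.\ $t\in[0,T]$ the slice $\{x\in\Tt:\varphi_x(t,x)+1>0\}$ has positive Lebesgue measure, and since each $q_i$ depends only on $t$, pointwise equality on this slice propagates to $q_1(t)=q_2(t)$.

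The only subtle point is the last step: handling degeneracy where $\varphi_x+1$ vanishes. This is not a real obstacle because the average condition on $\varphi$ prevents $\varphi_x+1$ from vanishing a.e.\ on any $t$-slice, but it does require the brief measure-theoretic argument just sketched (as opposed to the naive claim that strict convexity of $L_0$ in $z$ directly implies $q_1=q_2$). Modulo this, the proof reduces to a clean convexity argument combining Assumptions~\ref{assumtionOnG2} and the strict convexity of $H$ built into Problem~\ref{planningSG}; Assumption~\ref{assumtionOnH2} plays no role here beyond guaranteeing that the minimization problem is well-posed.
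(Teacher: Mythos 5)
Your proof is correct and follows the same broad outline as the paper's: convexity of \(\mathcal{I}_1\) gives equality in the convex combination, pointwise non-negativity of each convexity defect forces the convexity inequality to be saturated a.e., and strict convexity of \(G\) gives \((\varphi_1)_x=(\varphi_2)_x\) (hence \(\varphi_1=\varphi_2\) via the zero-mean constraint). Where you diverge from the paper is in recovering \(q_1=q_2\). You observe that strict convexity of \(z\mapsto L_0(z,y)\) for \(y>0\) yields \(q_1=q_2\) only on \(\{\varphi_x+1>0\}\), and you then use that this set meets a.e.\ \(t\)-slice in a set of positive measure (since \(\int_\Tt(\varphi_x+1)\,\dx=1\)) together with the fact that each \(q_i\) depends on \(t\) alone, plus a Fubini-type argument, to propagate the equality to a.e.\ \(t\in[0,T]\). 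The paper instead handles the degenerate set directly: finiteness of \(L_1\) a.e.\ (which follows from \(\ell_1<\infty\)) forces \(\varphi^i_t+q_i=0\) wherever \(\varphi^i_x+1=0\), and strict convexity of \(L\) on the complementary set then yields \(\varphi^1_t+q_1=\varphi^2_t+q_2\) a.e.\ on the whole domain; integrating over \(\Tt\) and using \(\int_\Tt\varphi^i_t\,\dx=0\) gives \(q_1=q_2\). Both routes are valid; your slice argument avoids the case analysis on the degenerate set but pays for it with the extra measure-theoretic step you flag. One small correction: \(\int_\Tt(\varphi_x+1)\,\dx=1\) follows from spatial periodicity of \(\varphi\) (which gives \(\int_\Tt\varphi_x\,\dx=0\)) together with \(|\Tt|=1\), not from the mean-zero condition \(\int_\Tt\varphi\,\dx=0\); the latter is what you need (and correctly use) to pass from \((\varphi_1)_x=(\varphi_2)_x\) to \(\varphi_1=\varphi_2\).
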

        \begin{proof}As shown in Proposition~\ref{ex1}, we have 
\begin{equation*}
\begin{aligned}
\ell_1=\min\limits_{(\varphi,q)\in\mathcal{B}_{0}^p}\mathcal{I}_1[\varphi,q]\in\Rr.
\end{aligned}
\end{equation*}

Let $(\varphi^1,q_1)$ and $(\varphi^2,q_2)$ solve \eqref{varProbphii}; that is,
                \begin{equation*}
                \mathcal{I}_1[\varphi^1,q_1]=\mathcal{I}_1[\varphi^2,q_2]=\mathcal{\ell}_1.
                \end{equation*}
                Setting $\bar{\varphi}=\frac{1}{2}(\varphi^1+\varphi^2)$ and $\bar{q}=\frac{1}{2}(q_1+q_2)$, by the convexity of $L_1$ and $G$, we have
                \begin{equation}\label{eq:mincxty}
                \ell_1\leq \mathcal{I}_1[\bar{\varphi},\bar{q}]\leq\tfrac{1}{2}\mathcal{I}_1[\varphi^1,q_1]+\tfrac{1}{2}\mathcal{I}_1[\varphi^2,q_2]=\ell_1.
                \end{equation}
                
Arguing as in the proof of
Proposition~\ref{ex1} to deduce \eqref{seqboundLLL}, we conclude that 
\begin{equation*}
\begin{aligned}
L_1(q_1,\varphi^1_t,\varphi^1_x),\, L_1(q_2,\varphi^2_t,\varphi^2_x), \, L_1(\bar{q},\bar{\varphi}_t,\bar{\varphi}_x) <+\infty\enspace \hbox{a.e.~in } [0,T]\times\Tt.
\end{aligned}
\end{equation*}
In particular, recalling  \eqref{defL} and \eqref{defLTilde}, we have
\begin{equation}
\label{eq:defLL0}
\begin{aligned}
&\varphi^1_t+q_1=0 \enspace\text{ a.e.~in } \{(t,x)\in [0,T]\times\Tt:{\varphi}_x^1= -1 \},\\
& \varphi^2_t+q_2=0 \enspace\text{ a.e.~in } \{(t,x)\in [0,T]\times\Tt:{\varphi}_x^2=-1 \},\\
& \bar\varphi_t + \bar q=0 \enspace\text{ a.e.~in } \{(t,x)\in [0,T]\times\Tt:\bar{\varphi}_x= -1 \}. 
\end{aligned}
\end{equation}
Moreover,
\eqref{eq:mincxty} yields                \begin{equation}\label{sumalsomin}
                \begin{split}
                \int_{0}^{T}\!\!\!\int_{\Tt}\Big(\tfrac{1}{2}L_1(q_1,\varphi^1_t,\varphi^1_x)+&\tfrac{1}{2}L_1(q_2,\varphi^2_t,\varphi^2_x)-L_1(\bar{q},\bar{\varphi}_t,\bar{\varphi}_x)
                \\+&\tfrac{1}{2}G\left( \varphi^1_x+1\right)+\tfrac{1}{2}G\left( \varphi^2_x+1\right)-G( \bar{\varphi}_x+1)\Big)\,\dx\dt=0.
                \end{split}
                \end{equation}
                
Using the  convexity of $L_1$  and $G$ once more, we get from \eqref{sumalsomin} that                
\begin{equation}\label{eq:cxcomb=0}
                \begin{cases}
                \frac{1}{2}L_1(q_1,\varphi^1_t,\varphi^1_x)+\frac{1}{2}L_1(q_2,\varphi^2_t,\varphi^2_x)-L_1(\bar{q},\bar{\varphi}_t,\bar{\varphi}_x)=0 ,\quad &\text{a.e in } [0,T]\times\Tt,  \\ \frac{1}{2}G\left( \varphi^1_x+1\right)+\frac{1}{2}G\left( \varphi^2_x+1\right)-G\left( \bar{\varphi}_x+1\right)=0 ,\quad &\text{a.e in } [0,T]\times\Tt .
                \end{cases}
                \end{equation}

The second identity in \eqref{eq:cxcomb=0} and the strict convexity  of  $G$ implies that
\begin{equation}
\label{eq:varphix=}
\begin{aligned}
\varphi^1_x=\varphi^2_x=\bar{\varphi}_x \text{ a.e.~in } [0,T]\times\Tt.
\end{aligned}
\end{equation}
 Then, using the first identity in \eqref{eq:cxcomb=0},   \eqref{defL}, \eqref{defLTilde}, and \eqref{eq:defLL0}, we conclude that
                \begin{equation}\label{uniqenesssys}
                \begin{cases}
                \frac{1}{2}L\Big ( \frac{q_1+\varphi^1_t}{\bar{\varphi}_x+1}\Big) +\frac{1}{2}L\Big( \frac{q_2+\varphi^2_t}{\bar{\varphi}_x+1}\Big)-L\Big( \frac{\bar{q}+\bar{\varphi}_t}{\bar{\varphi}_x+1}\Big) =0,\,&\text{a.e in } \{(t,x)\in [0,T]\times\Tt:\bar{\varphi}_x\neq -1 \} \\
                \varphi^1_t+q_1=\varphi^2_t+q_2=0,\,&\text{a.e in } \{(t,x)\in [0,T]\times\Tt:\bar{\varphi}_x= -1 \}.
                \end{cases}
                \end{equation}
                Because $L$ is strictly convex, we obtain from \eqref{uniqenesssys} that
                \begin{equation}\label{general_eq1}
                \varphi^1_t+q_1=\varphi^2_t+q_2 \enspace\text{ a.e~in }  [0,T]\times\Tt .
                \end{equation}
                Integrating  \eqref{general_eq1} over $\Tt$, we get
                \begin{equation}\label{eqfor-q1}
                q_2-q_1 =\int_{\Tt}\Big(\varphi^1_t-\varphi^2_{t}\Big)\,\dx.
                \end{equation}
                Because $\int_{\Tt}\varphi^1_t\,\dx=\int_{\Tt}\varphi^2_t\,\dx=0$ a.e.~in \([0,T]\) (see \eqref{derivph}), we obtain from \eqref{eq:varphix=} and \eqref{eqfor-q1}  that $q_1=q_2$ and  $\varphi^1=\varphi^2$ a.e.~in $[0,T]\times\Tt$.
        \end{proof}
        
        \begin{teo}\label{exist1} Suppose that Assumptions~\ref{assumtionOnG2}--\ref{assumtionOnH2}
hold  for some \(\beta>1\) and \(\gamma>1\).
Then, the variational problem in \eqref{varProbphii} with \(p=\frac{\beta\gamma}{\beta+\gamma-1}
 \) has a unique solution $(\varphi,q)\in \mathcal{B}_0^ \frac{\beta\gamma}{\beta+\gamma-1}$.
        \end{teo}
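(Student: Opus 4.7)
The plan is to observe that Theorem~\ref{exist1} is essentially a direct corollary of Propositions~\ref{ex1} and~\ref{uniq1}, which were established earlier in Section~\ref{sec2}. Indeed, the hypotheses ``Assumptions~\ref{assumtionOnG2}--\ref{assumtionOnH2} hold for some $\beta>1$ and $\gamma>1$'' include all four standing assumptions on $G$, on the initial-terminal densities, and on the Lagrangian: in particular, they entail both the strict convexity of $G$ from Assumption~\ref{assumtionOnG2} and the convexity plus $\gamma$-growth from Assumption~\ref{assumtionOnG1}, along with Assumption~\ref{assumtionOnBoundsA} and Assumption~\ref{assumtionOnH2}.

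First, I would invoke Proposition~\ref{ex1}, whose hypotheses (Assumptions~\ref{assumtionOnG1}--\ref{assumtionOnH2}) are contained in those of Theorem~\ref{exist1}. This yields the existence of at least one minimizer $(\varphi,q)\in\mathcal{B}_0^{\beta\gamma/(\beta+\gamma-1)}$ of the functional $\mathcal{I}_1$ defined in \eqref{varProbphii}. Next, I would apply Proposition~\ref{uniq1}, whose hypotheses (Assumptions~\ref{assumtionOnG2}--\ref{assumtionOnH2}) are again implied by those of the theorem, to conclude that any two minimizers in $\mathcal{B}_0^p$ must coincide. Combining these two facts, the minimizer produced by Proposition~\ref{ex1} is the unique element of $\mathcal{B}_0^{\beta\gamma/(\beta+\gamma-1)}$ attaining the infimum of $\mathcal{I}_1$, which is the desired conclusion.

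Since both ingredients are already proven, there is no real obstacle; the only point worth emphasising is that Proposition~\ref{uniq1} is stated for solutions in $\mathcal{B}_0^p$ without fixing $p$, so its application to the particular exponent $p=\tfrac{\beta\gamma}{\beta+\gamma-1}$ produced by Proposition~\ref{ex1} requires no additional argument. Thus the proof reduces to a one-line combination of the two preceding propositions.
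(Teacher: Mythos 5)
Your proposal is correct and follows exactly the paper's own one-line proof: invoke Proposition~\ref{ex1} for existence and Proposition~\ref{uniq1} for uniqueness, noting that the hypothesis ``Assumptions~\ref{assumtionOnG2}--\ref{assumtionOnH2} hold'' covers the hypotheses of both propositions. Your remark that Proposition~\ref{uniq1} is stated for general $p$ and hence applies at $p=\tfrac{\beta\gamma}{\beta+\gamma-1}$ without further work is a nice touch, though the paper leaves it implicit.
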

        \begin{proof}The proof follows from Propositions~\ref{ex1} and \ref{uniq1}.
        \end{proof}
        
        \section{The Variational Approach  For The Second-Order Planning Problem}
        \label{sec3}
        Here, we study the variational problem corresponding to $\lambda=1$ in Problem~\ref{problem}. According to the discussion in Section~\ref{sec1} and arguing as in the previous section, the appropriate functional set for \(\varphi\) is the set
                \begin{equation*}
        \mathcal{A}_1^p=\left\{\varphi\in \mathcal{A}_0^p: \varphi_{xx}\in L^{p}([0,T]\times\Tt)\right\},
        \end{equation*}
in which case the admissible set  $\mathcal{B}_{1}^p$  in  \eqref{B001}  for $(\varphi,q)$ can be written as        \begin{equation*}
        \begin{split}
        \mathcal{B}_{1}^p=\mathcal{A}_{1}^p\times L^p([0,T]).
        \end{split}
        \end{equation*}

        Recalling \eqref{defL},  
       the minimization problem 
        in Problem~\ref{problem} with $\lambda=1$ becomes
        \begin{equation}\label{varProbphi2}
        \min\limits_{\substack{(\varphi,q)\in \mathcal{B}_{1}^p}}\mathcal{I}_2[\varphi,q]= \min\limits_{\substack{(\varphi,q)\in \mathcal{B}_{1}^p}} \int_{0}^{T}\!\!\!\int_{\mathbb{T}}
\big(L_2(q,\varphi_t,\varphi_x+1,\varphi_{xx})-V\varphi_x+G( \varphi_x+1)\big)\,\dx\dt.
        \end{equation}
        Next, we  prove the existence and uniqueness of  solutions to  this variational problem. We start by addressing the existence of solutions.        \begin{pro}\label{ex2}Suppose that Assumptions~\ref{assumtionOnG1}--\ref{assumtionOnH2}
hold  for some \(\beta>1\) and \(\gamma>1\) (see Section~\ref{assumptions}).
Then, the variational problem in \eqref{varProbphi2}  with \(p=\frac{\beta\gamma}{\beta+\gamma-1}
 \) has  a solution $(\varphi,q)\in \mathcal{B}_1^ \frac{\beta\gamma}{\beta+\gamma-1}$.
        \end{pro}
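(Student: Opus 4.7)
\emph{Plan.} I follow the three-step template of Proposition~\ref{ex1}: (i) show that $\inf\mathcal{I}_2$ is finite, (ii) extract uniform bounds on a minimizing sequence in $\mathcal{B}_1^\sigma$ with $\sigma=\tfrac{\beta\gamma}{\beta+\gamma-1}$, and (iii) conclude via weak compactness and the sequential lower semi-continuity of Lemma~\ref{profunc}. The only new ingredient compared with the first-order case is the appearance of $\varphi_{xx}$ inside $L_2$, which forces a separate a priori bound on $\varphi^n_{xx}$. For the lower bound on $\mathcal{I}_2$, the argument of Proposition~\ref{ex1} carries over verbatim, using $L_2\geq 0$, Jensen's inequality applied to $G$, and the elementary bound on $\int V\varphi_x$. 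For the upper bound I take the same affine-in-time test pair $(\varphi^0,0)$ from \eqref{phiTest}; because $m_0,m_T\in C^2(\mathbb{T})$, the derivative $\varphi^0_{xx}=\tfrac{T-t}{T}m_0'+\tfrac{t}{T}m_T'$ is bounded, so $(\varphi^0,0)\in\mathcal{B}_1^\sigma$ and the bound on $\mathcal{I}_2[\varphi^0,0]$ is obtained as in Proposition~\ref{ex1}, after absorbing the bounded $\varphi^0_{xx}$ into the argument of $L$.

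\emph{A priori bounds on a minimizing sequence $(\varphi^n,q_n)$.} The $G$-term together with Assumption~\ref{assumtionOnG1} gives $\sup_n\|\varphi^n_x+1\|_{L^\gamma}<\infty$. The $L_2$-term together with Assumption~\ref{assumtionOnH2} gives
\[
\sup_n\int_0^T\!\!\!\int_{\mathbb{T}}\frac{|\varphi^n_t+q_n-\varphi^n_{xx}|^\beta}{|\varphi^n_x+1|^{\beta-1}}\,\dx\dt<\infty,
\]
and the Young-inequality computation from Proposition~\ref{ex1}, applied with $\varphi^n_t+q_n$ replaced by $\varphi^n_t+q_n-\varphi^n_{xx}$, upgrades this to $\sup_n\|\varphi^n_t+q_n-\varphi^n_{xx}\|_{L^\sigma}<\infty$. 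Integrating over $\mathbb{T}$ and using both $\int_\mathbb{T}\varphi^n_t\,\dx=0$ (from \eqref{derivph}) and $\int_\mathbb{T}\varphi^n_{xx}\,\dx=0$ (by periodicity), I obtain $q_n=\int_\mathbb{T}(\varphi^n_t+q_n-\varphi^n_{xx})\,\dx$, whence $\sup_n\|q_n\|_{L^\sigma([0,T])}<\infty$, and consequently $\varphi^n_t-\varphi^n_{xx}$ is uniformly bounded in $L^\sigma([0,T]\times\mathbb{T})$.

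\emph{Main difficulty: separating $\varphi^n_t$ from $\varphi^n_{xx}$.} This is the step that is not present in Proposition~\ref{ex1}. I write $\psi^n=\varphi^n-\varphi^0$, so that $\psi^n(0,\cdot)=\psi^n(T,\cdot)=0$ and
\[
\psi^n_t-\psi^n_{xx}=(\varphi^n_t-\varphi^n_{xx})-(\varphi^0_t-\varphi^0_{xx})
\]
is uniformly bounded in $L^\sigma$ because $\varphi^0$ is smooth. I then invoke maximal $L^\sigma$-regularity for the heat equation on the torus (valid for every $1<\sigma<\infty$) with zero initial datum to conclude
\[
\sup_n\bigl(\|\psi^n_t\|_{L^\sigma}+\|\psi^n_{xx}\|_{L^\sigma}\bigr)<\infty,
\]
and therefore the same bounds for $\varphi^n_t$ and $\varphi^n_{xx}$. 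I expect this to be the main technical obstacle: not because maximal regularity fails, but because it must be set up carefully in the periodic setting and reconciled with the fact that the minimizers also satisfy $\psi^n(T,\cdot)=0$, which is a consequence of the minimization rather than an input of the parabolic regularity estimate.

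\emph{Compactness and conclusion.} The bounds above, together with Poincaré--Wirtinger applied to the zero-mean condition as in Proposition~\ref{ex1}, yield $\sup_n\|\varphi^n\|_{W^{1,\sigma}}+\sup_n\|\varphi^n_{xx}\|_{L^\sigma}<\infty$. Up to a subsequence, $\varphi^n\rightharpoonup\bar\varphi$ in $W^{1,\sigma}$, $\varphi^n_{xx}\rightharpoonup\bar\varphi_{xx}$ in $L^\sigma$, and $q_n\rightharpoonup\bar q$ in $L^\sigma([0,T])$. The verification that $(\bar\varphi,\bar q)\in\mathcal{B}_1^\sigma$ proceeds exactly as in Proposition~\ref{ex1}: the zero-mean and initial/terminal trace conditions pass to the limit via the Rellich--Kondrachov theorem and the trace theorem, while $\bar\varphi_x+1\geq 0$ follows from a.e.\ convergence along a further subsequence. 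Finally, Lemma~\ref{profunc} applied to the functional \eqref{profunc2} with $v_1=\bar q$, $v_2=\bar\varphi_t$, $v_3=\bar\varphi_x+1$, $v_4=\bar\varphi_{xx}$ gives $\mathcal{I}_2[\bar\varphi,\bar q]\leq\liminf_n\mathcal{I}_2[\varphi^n,q_n]$, showing that $(\bar\varphi,\bar q)$ is a minimizer.
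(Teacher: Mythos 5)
Your proof is correct and follows essentially the same path as the paper's: the one genuinely new ingredient relative to Proposition~\ref{ex1} is the maximal $L^\sigma$-regularity for the heat equation, which both you and the paper invoke to separate $\varphi^n_t$ from $\varphi^n_{xx}$ after bounding their difference, and all the other bounds transfer mutatis mutandis. The only difference is cosmetic — you subtract $\varphi^0$ to reduce to zero initial data, whereas the paper applies the parabolic $L^p$-estimate directly to $\varphi^n$ using the fixed initial trace $\varphi^n(0,\cdot)\in C^3(\mathbb{T})$ — and your worry about ``reconciling'' the terminal condition is unfounded: $\psi^n(T,\cdot)=0$ is not a consequence of minimization but is already part of the admissibility constraint in $\mathcal{B}_1^\sigma$ (just like $\psi^n(0,\cdot)=0$), and in any case the parabolic estimate uses only the initial datum, so the terminal trace plays no role in the regularity step.
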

        \begin{proof}

The proof is very similar to that of Proposition~\ref{ex1}, for which reason we only highlight the main differences. 

Because $m_0$, $m_T\in C^2(\Tt)$,  there exists a positive constant, $\bar k$, such that
                \begin{equation}\label{bosecond}
                \Vert m_0\Vert_{C^1(\Tt)},\, \Vert m_T\Vert_{C^1(\Tt)}\leq \bar k.
                \end{equation}
Next, we observe                 that $\varphi^0$ in \eqref{phiTest} also belongs to $\mathcal{B}_1^\sigma$, where $\sigma=\frac{\beta\gamma}{\beta+\gamma-1}$, with
\begin{equation*}
\begin{aligned}
\varphi^0_{xx} = \frac{T-t}{T}(m_0)_x +\frac{t}{T}(m_T)_x.
\end{aligned}
\end{equation*}
Thus, we can find two positive constants, \(c\) and \(C\), only depending  on \(k_0\) (see \eqref{boabove}), \(\bar k\), \(L\), and \(G\), such that 
                \begin{equation}\label{funcBound2}
                -c\leq\inf\limits_{\substack{(\varphi,q)\in \mathcal{B}_1^\sigma}}\mathcal{I}_2[\varphi,q]\leq C. 
                \end{equation}
                
                Let $(\varphi^n,q_n)_{n\in\Nn}\subset\mathcal{B}_1^\sigma$ be a minimizing sequence for the variational problem in \eqref{varProbphi2} with \(p=\frac{\beta\gamma}{\beta+\gamma-1}
 \); that is,

                \begin{equation*}
                \lim\limits_{n\to\infty}\mathcal{I}_2[\varphi^n,q_n]=\inf\limits_{\substack{(\varphi,q)\in \mathcal{B}_1^\sigma}}\mathcal{I}_2[\varphi,q]=        \ell_{2}.
                \end{equation*}
                As in the proof of Proposition~\ref{ex1} (see  \eqref{seqboundphix} and \eqref{seqboundphit}), using  \eqref{funcBound2}  and the assumptions of the proposition, we conclude  that                \begin{equation}\label{seqboundphix2}
                \sup_{n\in\Nn}\int_{0}^{T}\!\!\!\int_{\Tt}|\varphi^n_x+1|^{\gamma}\,\dx\dt<+\infty
                \end{equation} 
                and
                \begin{equation}\label{seqboundphit2}
              \sup_{n\in\Nn}  \int_{0}^{T}\!\!\!\int_{\Tt}|\varphi^n_t+q_n-\varphi^n_{xx}|^\sigma\,\dx\dt< \infty.
                \end{equation}
                Moreover, similarly to \eqref{qLnorm}, we have from \eqref{seqboundphit2} and \eqref{derivph} that                \begin{equation}\label{boundq}
        \begin{split}
        \sup\limits_{n\in\Nn} \int_0^T |q_n|^\sigma\,\dt&=\sup\limits_{n\in\Nn}\int_{0}^{T}\left|
\int_{\Tt}q_n\,\dx\right|^\sigma\dt =\sup_{n\in\Nn}\int_{0}^{T}\left| \int_{\Tt}\varphi^n_t+q_n-\varphi^n_{xx}\,\dx\right| ^\sigma\dt \\ &\leq\sup_{n\in\Nn} \int_{0}^{T}\int_{\Tt}|\varphi^n_t+q_n-\varphi^n_{xx}|^\sigma\,\dx\dt<+\infty.
        \end{split}
        \end{equation}

                Next, we prove that the sequences \(\{\varphi^n_t\}_{n\in\Nn}\), \(\{\varphi^n_{xx}\}_{n\in\Nn}\), and \(\{\varphi^n\}_{n\in\Nn}\) are uniformly bounded with respect to the $L^\sigma$-norm. Using \eqref{boundq} in \eqref{seqboundphit2}, we get
                \begin{equation}\label{seqboundphit22}
                \sup_{n\in\Nn}  \int_{0}^{T}\!\!\!\int_{\Tt}|\varphi^n_t-\varphi^n_{xx}|^\sigma\,\dx\dt<+\infty.
                \end{equation}
                Set
                \begin{equation}\label{heateq}
                f(t,x)=\varphi^n_t(t,x)-\varphi^n_{xx}(t,x).
                \end{equation}
                Note that \eqref{heateq} is a non-homogeneous heat equation and, by \eqref{seqboundphit22}, we have $f\in L^\sigma([0,T]\times\Tt)$. Furthermore,  because $\varphi^n\in\mathcal{A}_1^\sigma$, we also have $\varphi^n(0,\cdot)\in C^3(\Tt)$. Thus, we can use the $L^p$-regularity theory for the heat equation (see  \cite[Theorem 5.4]{robinson2016N}), which yields
                \begin{equation}\label{seqboundphit-x}
                \sup\limits_{n\in\Nn}\left( \Vert\varphi^n_t\Vert_{L^\sigma([0,T]\times\Tt)}+\Vert\varphi^n_{xx}\Vert_{L^\sigma([0,T]\times\Tt)}\right) <\infty.
                \end{equation}
                Using the Poincar{\'e}--Wirtinger inequality, we conclude from  \eqref{seqboundphix2} and \eqref{seqboundphit-x} that                 \begin{equation}\label{boundphi_n2}
                \sup\limits_{n\in\Nn}   \Vert\varphi^n\Vert_{W^{1,\sigma}([0,T]\times\Tt)}<+\infty.
                \end{equation}
                
                Because $\gamma>\sigma>1$,   from \eqref{boundq}, \eqref{seqboundphit-x}, and \eqref{boundphi_n2}, we deduce that,  up to extracting  a subsequence if necessary, there exist  $\bar{q}\in L^\sigma([0,T])$ and  $\bar{\varphi}\in W^{1,\sigma}([0,T]\times\Tt)$ such that
\begin{equation}
\label{weaklyphiall2}
\begin{aligned}
& q_n\rightharpoonup\bar{q}\quad \text{weakly in} \,\ L^\sigma([0,T]),\\
& \varphi^n_{} \rightharpoonup\bar{\varphi}_{}\quad \text{weakly in}
\,\ W^{1,\sigma}([0,T]\times\Tt),\\ 
& \varphi^n_{xx} \rightharpoonup\bar{\varphi}_{xx}\quad
\text{weakly in} \,\ L^\sigma([0,T]\times\Tt).
\end{aligned}
\end{equation}

                Finally, relying on \eqref{weaklyphiall2} and arguing  as at the end of the proof of Proposition~\ref{exist1}, we conclude that $(\bar{\varphi},\bar{q})\in\mathcal{B}_1^\sigma$ and that it
                 minimizes the functional in \eqref{varProbphi2} over \(\mathcal{B}_1^\sigma\).
        \end{proof}
        
        Next, we prove the uniqueness of solutions to the variational problem in \eqref{varProbphi2}.
        
        \begin{pro}\label{uniq2} Suppose that Assumptions~\ref{assumtionOnG2}--\ref{assumtionOnH2}
hold  for some \(\beta>1\) and \(\gamma>1\) (see Section~\ref{assumptions}).
Then, the variational problem in \eqref{varProbphi2}  with \(p=\frac{\beta\gamma}{\beta+\gamma-1}
 \) has at most one solution.
        \end{pro}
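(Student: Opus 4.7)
The plan is to mirror the argument for Proposition~\ref{uniq1}, making the necessary adjustments to accommodate the second-order term \(\varphi_{xx}\).

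First, I would argue that a minimizer exists (by Proposition~\ref{ex2}), so $\ell_2 = \min_{(\varphi,q)\in\mathcal{B}_1^p}\mathcal{I}_2[\varphi,q]\in\Rr$. Then, supposing $(\varphi^1,q_1)$ and $(\varphi^2,q_2)$ both achieve this minimum, I would consider the midpoint $\bar\varphi = \tfrac12(\varphi^1+\varphi^2)$ and $\bar q = \tfrac12(q_1+q_2)$. Because $\mathcal{A}_1^p$ and $L^p([0,T])$ are convex, $(\bar\varphi,\bar q)\in\mathcal{B}_1^p$. Using the convexity of $L_2$ (Corollary~\ref{propConvexL}) and of $G$ (Assumption~\ref{assumtionOnG2}), a computation identical to \eqref{eq:mincxty} shows that $(\bar\varphi,\bar q)$ is also a minimizer, and that the convex-combination inequalities for $L_2$ and $G$ must hold as equalities almost everywhere, in analogy with \eqref{eq:cxcomb=0}.

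Next, the strict convexity of $G$ forces $\varphi^1_x = \varphi^2_x = \bar\varphi_x$ a.e.~in $[0,T]\times\Tt$, exactly as in \eqref{eq:varphix=}. The crucial new observation for the second-order case is that the equality of $\varphi^1_x$ and $\varphi^2_x$ as elements of $L^p([0,T]\times\Tt)$ implies the equality of their distributional $x$-derivatives, so
\begin{equation*}
\varphi^1_{xx} = \varphi^2_{xx} = \bar\varphi_{xx}\quad\text{a.e.~in } [0,T]\times\Tt.
\end{equation*}
With this in hand, the $L_2$-equality reduces, on the set $\{\bar\varphi_x+1>0\}$, to
\begin{equation*}
\tfrac{1}{2}L\Big(\tfrac{q_1+\varphi^1_t-\varphi^1_{xx}}{\bar\varphi_x+1}\Big) + \tfrac{1}{2}L\Big(\tfrac{q_2+\varphi^2_t-\varphi^2_{xx}}{\bar\varphi_x+1}\Big) = L\Big(\tfrac{\bar q + \bar\varphi_t - \bar\varphi_{xx}}{\bar\varphi_x+1}\Big),
\end{equation*}
and the strict convexity of $L$ (which follows from $H\in C^2$ being strictly convex) yields $\varphi^1_t + q_1 - \varphi^1_{xx} = \varphi^2_t + q_2 - \varphi^2_{xx}$ on that set. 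On the complementary set $\{\bar\varphi_x + 1 = 0\}$, the finiteness of $L_0$ forces $\varphi^1_t + q_1 - \varphi^1_{xx} = 0 = \varphi^2_t + q_2 - \varphi^2_{xx}$ (the analogue of \eqref{eq:defLL0}). Since $\varphi^1_{xx}=\varphi^2_{xx}$ a.e., in both cases we obtain $\varphi^1_t + q_1 = \varphi^2_t + q_2$ a.e.~in $[0,T]\times\Tt$.

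Finally, integrating over $\Tt$ and using $\int_\Tt \varphi^i_t\,\dx = 0$ a.e.~in $[0,T]$ (the analogue of \eqref{derivph}, which still holds because $\varphi^i\in\mathcal{A}_1^p\subset\mathcal{A}_0^p$), we get $q_1 = q_2$ and hence $\varphi^1_t = \varphi^2_t$ a.e. Combined with $\varphi^1_x = \varphi^2_x$ a.e.~and with the matching initial/terminal traces prescribed in the definition of $\mathcal{A}_1^p$, we conclude $\varphi^1 = \varphi^2$ a.e. The main technical point to be careful about is the step in which equality of $\varphi^1_x$ and $\varphi^2_x$ as $L^p$ functions is promoted to equality of their distributional second derivatives, which must then be correctly matched with the pointwise identity coming from the strict convexity of $L$ on the set where $\bar\varphi_x+1>0$, and with the vanishing identity on its complement.
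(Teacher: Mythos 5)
Your proof is correct and follows essentially the same route as the paper: the paper's proof of Proposition~\ref{uniq2} is a one-liner that refers the reader to the proof of Proposition~\ref{uniq1} together with precisely the observation you identify as the crucial new point, namely that $\varphi^1_x=\varphi^2_x=\bar\varphi_x$ a.e.\ forces $\varphi^1_{xx}=\varphi^2_{xx}=\bar\varphi_{xx}$ a.e.\ (equality of distributional derivatives), after which the first-order argument carries over verbatim.
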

        \begin{proof} The proof of this result  is identical to that   of Proposition~\ref{uniq1}, observing that   $\varphi^1_x=\varphi^2_x=\bar{\varphi}_{x}$ a.e.~in $[0,T]\times\Tt$ implies that $\varphi^1_{xx}=\varphi^2_{xx}=\bar{\varphi}_{xx}$ a.e.~in $[0,T]\times\Tt$. 
        \end{proof}
        Propositions \ref{ex2} and \ref{uniq2} yield the following theorem.
        \begin{teo}\label{exist11} Suppose that Assumptions~\ref{assumtionOnG2}--\ref{assumtionOnH2}
hold  for some \(\beta>1\) and \(\gamma>1\) (see Section~\ref{assumptions}).
Then, the variational problem in \eqref{varProbphi2}  with \(p=\frac{\beta\gamma}{\beta+\gamma-1}
 \) has a unique solution $(\varphi,q)\in \mathcal{B}_1^ \frac{\beta\gamma}{\beta+\gamma-1}$.
        \end{teo}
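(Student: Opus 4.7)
The proof is a direct consequence of Propositions~\ref{ex2} and \ref{uniq2}, which have just been established, and indeed this is the only step I would include in the formal proof: Proposition~\ref{ex2} yields a minimizer $(\varphi,q)\in\mathcal{B}_1^\sigma$ of $\mathcal{I}_2$ with $\sigma=\frac{\beta\gamma}{\beta+\gamma-1}$, while Proposition~\ref{uniq2} guarantees its uniqueness, so the two statements combine to give Theorem~\ref{exist11} verbatim.

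To put this in context, the strategy for existence follows the direct method in the calculus of variations. The coercivity provided by Assumption~\ref{assumtionOnG1} gives uniform $L^\gamma$ bounds on $\varphi^n_x+1$ along a minimizing sequence, while Assumption~\ref{assumtionOnH2} applied to $L_2$ controls $\varphi^n_t+q_n-\varphi^n_{xx}$ in $L^\beta$ on the region where the denominator is positive. Young's inequality, together with the identity $\sigma\cdot\frac{\beta+\gamma-1}{\gamma}=\beta$, then promotes these bounds to a uniform $L^\sigma$ estimate on $\varphi^n_t+q_n-\varphi^n_{xx}$. Averaging over $\Tt$ and using the zero-average condition \eqref{derivph} controls $q_n$ in $L^\sigma([0,T])$. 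The key additional ingredient compared to the first-order case in Section~\ref{sec2} is $L^p$-regularity for the non-homogeneous heat equation $\varphi^n_t-\varphi^n_{xx}=f^n$, which I would invoke (as in Proposition~\ref{ex2}) to separate $\varphi^n_t$ and $\varphi^n_{xx}$ individually in $L^\sigma$. Extracting a weakly convergent subsequence in $W^{1,\sigma}([0,T]\times\Tt)$ with $\varphi^n_{xx}\rightharpoonup\bar\varphi_{xx}$ in $L^\sigma$, compactness of the embedding $W^{1,\sigma}\hookrightarrow L^\sigma$ and weak continuity of the trace verify that the limit lies in $\mathcal{B}_1^\sigma$, and the sequential weak lower semi-continuity from Lemma~\ref{profunc} identifies it as a minimizer.

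Uniqueness follows by a convexity argument: given two minimizers, the midpoint is admissible, and the convexity of $L_2$ (Corollary~\ref{propConvexL}) and of $G$ forces equality of $\mathcal{I}_2$ along the segment joining them. Strict convexity of $G$ (Assumption~\ref{assumtionOnG2}) then gives $\varphi^1_x=\varphi^2_x$ almost everywhere, which automatically yields $\varphi^1_{xx}=\varphi^2_{xx}$; strict convexity of $L$ (inherited from strict convexity of $H$) applied to the remaining summand gives $\varphi^1_t+q_1=\varphi^2_t+q_2$, and then averaging over $\Tt$ together with $\int_{\Tt}\varphi^i_t\,\dx=0$ forces $q_1=q_2$ and $\varphi^1=\varphi^2$. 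The main obstacle, compared to the first-order case of Theorem~\ref{exist1}, is the presence of $\varphi_{xx}$ inside the first slot of $L_0$: this couples second-order spatial regularity with the time derivative and prevents a direct separation of norms, but the parabolic $L^p$-regularity step resolves it cleanly since the trace at $t=0$ is $C^3$ by assumption on $m_0$.
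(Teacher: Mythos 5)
Your proof is correct and takes the same route as the paper: Theorem~\ref{exist11} is stated as an immediate consequence of Propositions~\ref{ex2} and \ref{uniq2}, and the paper's own (one-line) proof says exactly that. Your surrounding commentary also accurately reflects the arguments in those propositions, in particular the role of parabolic $L^p$-regularity for $\varphi^n_t-\varphi^n_{xx}=f^n$ in separating the two derivatives in $L^\sigma$, which is the genuine new ingredient relative to the first-order case.
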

        \begin{proof}[Proof of Theorem~\ref{exist2}]The proof follows from Propositions~\ref{ex2} and \ref{uniq2}.
        \end{proof}

        \section{Weak Solution To  The First-Order Planning Problem With  Congestion}
\label{sec5}
In this section, we study the first-order planning problem with congestion stated in Problem~\ref{con}.  For convenience,  we  recall the associated PDE system here:
\begin{equation}\label{planningCS}
\begin{cases} 
-u_t+\frac{u_x^2}{2 m^\alpha}=m^\mu & \quad \text{in}\,\ (0,T)\times \Tt  \\ m_t- (u_x m^{1-\alpha})_x=0   & \quad \text{in}\,\ (0,T)\times \Tt  \\
m(0,x)=m_0(x)& \quad\text{in}\,\ \Tt\\
m(T,x)=m_T(x)& \quad\text{in}\,\ \Tt
\end{cases},
\end{equation}
where $\alpha>0$ and $\mu>0$.

We start by motivating the notion of the weak solutions to this
problem introduced in Definition~\ref{def-weak} below. Then, we prove a priori estimates for such weak solutions and establish their existence. 

\subsection{Motivation} Suppose  that
$(u,m)\in C^{2}([0,T]\times \Tt)\times C^{1}([0,T])$ solves \eqref{planningCS} with $m>0$. The second equation in \eqref{planningCS} can be written as
\begin{equation*}    
\div_{(t,x)}( m,-u_x m^{1-\alpha}) =0.
\end{equation*}
Accordingly, there exist functions $\varphi:(0,T)\times \Tt\to\Rr$  and $q:(0,T)\to\Rr$  such that
\begin{equation*}
\begin{cases}
m=\varphi_x+1   \\ u_x m^{1-\alpha}=\varphi_t+q.
\end{cases}
\end{equation*}
Thus, we have
\begin{equation}\label{ux1}
\begin{cases}
m=\varphi_x+1   \\ u_x =(\varphi_x+1)^{\alpha-1}(\varphi_t+q).
\end{cases}
\end{equation}
Differentiating the first equation in \eqref{planningCS} with respect to $x$, we get 
\begin{equation}\label{first}
-u_{xt}  +\frac{1}{2}\left(m^\alpha\left(  \frac{u_x}{m^{\alpha}}\right)^2 \right)_x=(m^\mu)_x.
\end{equation}
Substituting in \eqref{first}  the expressions for $u_x$ and $m$ obtained in \eqref{ux1}, and observing that \(\int_\Tt u_x\,\dx
=0\) by periodicity in the space variable, we get
\begin{equation}\label{eq: mot-}
\begin{cases}
F_1(\varphi,q)=0\\
F_2(\varphi,q)=0, 
\end{cases}
\end{equation}
where
\begin{equation}\label{def-F1F2}
\begin{split}
&F_1(\varphi,q)=- \left( \frac{\varphi_t+q}{ (\varphi_x+1)^{1-\alpha}}\right)_t  +\frac{1}{2}\left(\frac{(\varphi_t+q)^2}{ (\varphi_x+1)^{2-\alpha}} \right)_x- ((\varphi_x+1)^\mu)_x,\\
&F_2(\varphi,q)=\int_{\Tt} \frac{\varphi_t+q}{ (\varphi_x+1)^{1-\alpha}}\,\dx.
\end{split}
\end{equation}
Moreover, arguing as in Section~\ref{sec2} (see \eqref{phi_int_0}--\eqref{eq:iotas}),  we conclude that
\begin{equation}\label{first-pde'}
\begin{cases}
\displaystyle\int_{\Tt}\varphi(t,x)\,\dx=0, &t\in[0,T],
\\
\displaystyle\varphi(0,x)=\int_{0}^{x}\big(m_0(\tau)-1\big)\,\d \tau- \iota_0, &x\in\Tt,\\
\displaystyle\varphi(T,x)=\int_{0}^{x}\big(m_T(\tau)-1\big)\,\d
\tau- \iota_T, &x\in\Tt,
\end{cases}
\end{equation}
where $\iota_0$ and $\iota_T$ are the constants in \eqref{eq:iotas}.

We set 
\begin{equation*}
\begin{split}
\mathcal{H}_0^+=\left\{\varphi\in C^2([0,T]\times\Tt): \,\varphi \text{ satisfies \eqref{first-pde'}, }\,\varphi_{x}+1>0\right\},
\end{split}
\end{equation*}
and    define the operator $A:D(A)\subset L^2([0,T]\times\Tt)\times L^2([0,T]) \to L^2([0,T]\times\Tt)\times L^2([0,T])$, with domain $D(A)=\mathcal{H}_0^+\times C^1([0,T])$,  by setting  
\begin{equation}\label{def_A}
A\begin{bmatrix}
\varphi \\q
\end{bmatrix} :=\begin{bmatrix}
F_1(\varphi,q)\\F_2(\varphi,q)
\end{bmatrix} \enspace\text{ for } (\varphi,q) \in D(A).
\end{equation}
Note   that, by \eqref{eq: mot-},  if   \((\varphi,q)\)  satisfies
 \eqref{ux1} with \((u,m)\)  as above, then
\begin{equation}\label{opA: 0}
A\begin{bmatrix}
\varphi \\q
\end{bmatrix} =\begin{bmatrix}
0 \\0
\end{bmatrix},
\end{equation}
which corresponds to the system of PDEs in Problem~\ref{con-var}.
As we state in the next proposition, given a solution of \eqref{opA: 0}, we can build a solution to \eqref{planningCS}. The proof is similar to that of Proposition \ref{prorel}, and so we omit it
here.

\begin{pro}\label{prorelc1}  Let  $(\varphi,q)\in C^{2}([0,T]\times \Tt)\times C^{1}([0,T])$ solve \eqref{opA: 0} in the classical sense. Assume that $\varphi_x+1>0$ and
        \begin{equation*}
        \begin{cases}
        m(t,x)=\varphi_x(t,x)+1   \\  u(t,x)=\int_{0}^{x}(\varphi_x(t,\tau)+1)^{\alpha-1}(\varphi_t(t,\tau)+q(t))\,\d \tau .
        \end{cases}
        \end{equation*}
Then, there exits a function, \(\vartheta:[0,T]\to\Rr\), only depending on \(t\), such that $(\tilde u, m)=(u + \vartheta,m)$ solves \eqref{planningCS}.
\end{pro}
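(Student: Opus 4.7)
The plan is to mimic almost verbatim the proof of Proposition~\ref{prorel}, exploiting the fact that the operator equation $A[\varphi,q]=0$ was constructed precisely as the \(x\)-derivative of the Hamilton--Jacobi equation coupled with a zero-average condition that guarantees \(u\) is well-defined on the torus. First, I would simply plug in the definitions and verify the continuity equation. From \eqref{ux1}-type computation, $u_x=(\varphi_x+1)^{\alpha-1}(\varphi_t+q)$, so
\begin{equation*}
u_x\, m^{1-\alpha} = (\varphi_x+1)^{\alpha-1}(\varphi_t+q)(\varphi_x+1)^{1-\alpha}=\varphi_t+q,
\end{equation*}
and hence $(u_x m^{1-\alpha})_x=\varphi_{tx}=m_t$, giving the second equation of \eqref{planningCS} directly. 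Before proceeding, I would also note that the periodicity requirement for $u$ viewed as a function on $\Tt$ is precisely the condition $F_2(\varphi,q)=0$ in \eqref{def-F1F2}, so this definition of $u$ is legitimate.

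Next, I would tackle the Hamilton--Jacobi equation by showing that its $x$-derivative is exactly $F_1(\varphi,q)=0$. A short direct computation, using the same substitutions as in \eqref{first}--\eqref{def-F1F2}, yields
\begin{equation*}
\frac{u_x^2}{2m^\alpha}=\frac{1}{2}\,\frac{(\varphi_t+q)^2}{(\varphi_x+1)^{2-\alpha}}\quad\text{and}\quad u_{xt}=\Bigl(\frac{\varphi_t+q}{(\varphi_x+1)^{1-\alpha}}\Bigr)_t,
\end{equation*}
so that differentiating $-u_t+\tfrac{u_x^2}{2m^\alpha}-m^\mu$ in $x$ and using $m=\varphi_x+1$ produces exactly $F_1(\varphi,q)$, which is zero by hypothesis. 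I would check this via the regularity assumption $(\varphi,q)\in C^2([0,T]\times\Tt)\times C^1([0,T])$, which justifies interchanging $\partial_x$ and $\partial_t$ on $u$.

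The final step, and the only subtle one, is recovering the undifferentiated Hamilton--Jacobi equation. Define
\begin{equation*}
\Phi(t,x):=-u_t(t,x)+\frac{u_x(t,x)^2}{2m(t,x)^\alpha}-m(t,x)^\mu.
\end{equation*}
The previous step gives $\Phi_x\equiv 0$ on $[0,T]\times\Tt$, so there exists $c:[0,T]\to\Rr$ with $\Phi(t,x)=c(t)$. Setting $\vartheta(t):=-\int_0^t c(s)\,ds$ and $\tilde u:=u+\vartheta$, one computes $-\tilde u_t+\tfrac{\tilde u_x^2}{2m^\alpha}-m^\mu=\Phi-\vartheta'=c(t)+c(t)\cdot(-1)$; adjusting the sign convention so that $\vartheta'=c$, we obtain $\tilde u$ satisfying the first equation of \eqref{planningCS}. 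The boundary conditions $m(0,\cdot)=m_0$ and $m(T,\cdot)=m_T$ in \eqref{planningCS} follow at once from $m=\varphi_x+1$ together with the endpoint conditions \eqref{first-pde'} for $\varphi$, after differentiating in $x$.

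The expected main obstacle is purely bookkeeping: being careful with the sign of $\vartheta$ and verifying that the choice of antiderivative in the definition of $u$ (an integral from $0$ to $x$) does not obstruct periodicity, which is handled entirely by $F_2=0$. Since the proposition is stated for classical solutions, no regularization or distributional argument is needed, and the proof is essentially the same as that of Proposition~\ref{prorel}, as the authors indicate.
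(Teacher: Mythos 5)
Your proposal is correct and follows exactly the route the paper indicates (the authors omit this proof, stating only that it is similar to that of Proposition~\ref{prorel}): verify the continuity equation by construction, observe that $F_2(\varphi,q)=0$ is precisely $\int_\Tt u_x\,\dx=0$ (so $u$ is well-defined on $\Tt$), identify $F_1(\varphi,q)=0$ with the $x$-derivative of the Hamilton--Jacobi equation, and absorb the resulting $x$-independent discrepancy into $\vartheta$. The only blemish is the initial sign of $\vartheta$, which you notice and correct in-line ($\vartheta(t)=\int_0^t c(s)\,\d s$ is the right choice), so this is not a genuine gap.
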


To conclude this subsection, we prove that \(A\) is a monotone
operator. This property lies at the core of the notion of weak solution introduced in the following subsection.

\begin{pro}\label{mon-A}Assume that \(\alpha\in(0,2)\). Then,
the operator $A$ introduced in \eqref{def_A} is monotone in $L^2([0,T]\times\Tt)\times L^2([0,T])$; that is, 
        \begin{equation}\label{eq: monoA}
        \begin{split}
        \left( A\begin{bmatrix}
        \varphi \\q_1
        \end{bmatrix}-A\begin{bmatrix}
        \psi \\q_2
        \end{bmatrix}, \begin{bmatrix}
        \varphi \\q_1
        \end{bmatrix}-\begin{bmatrix}
        \psi \\q_2
        \end{bmatrix}\right)_{L^2\times L^2}\geq 0
        \end{split}
        \end{equation}
        for all $(\varphi,q_1)$, $(\psi,q_2) \in D(A)$.
\end{pro}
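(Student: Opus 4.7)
The plan is to rewrite the left-hand side of \eqref{eq: monoA} as a single integral over $[0,T]\times\Tt$ via integration by parts, and then show the resulting integrand is pointwise non-negative. Set $P=\varphi_t+q_1$, $R=\varphi_x+1$, $P'=\psi_t+q_2$, $R'=\psi_x+1$, $\tilde\varphi=\varphi-\psi$, and $\tilde q=q_1-q_2$. Because $\varphi$ and $\psi$ share the same time traces at $t=0$ and $t=T$ by the constraints in \eqref{first-pde'} and are both spatially periodic, all boundary contributions from integration by parts vanish. Moreover, the temporal integration by parts of the $(\cdot)_t$-piece of $F_1$ against $\tilde\varphi$ combines with $F_2(\cdot)\,\tilde q$ to produce the pairing of $P/R^{1-\alpha}-P'/R'^{1-\alpha}$ against $(\varphi_t+q_1)-(\psi_t+q_2)=P-P'$. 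After subtracting the analogous identity with the roles of $(\varphi,q_1)$ and $(\psi,q_2)$ swapped, \eqref{eq: monoA} reduces to
\begin{align*}
\int_0^T\!\!\!\int_\Tt (R^\mu-R'^\mu)(R-R')\,\dx\dt
&+\int_0^T\!\!\!\int_\Tt\Bigl[\Bigl(\tfrac{P}{R^{1-\alpha}}-\tfrac{P'}{R'^{1-\alpha}}\Bigr)(P-P')\\
&\qquad-\tfrac{1}{2}\Bigl(\tfrac{P^2}{R^{2-\alpha}}-\tfrac{P'^2}{R'^{2-\alpha}}\Bigr)(R-R')\Bigr]\,\dx\dt.
\end{align*}

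The first integral is non-negative because $\mu>0$ and $z\mapsto z^\mu$ is increasing on $(0,\infty)$, so the argument reduces to the pointwise inequality
\begin{equation*}
\Bigl(\tfrac{P}{R^{1-\alpha}}-\tfrac{P'}{R'^{1-\alpha}}\Bigr)(P-P')-\tfrac{1}{2}\Bigl(\tfrac{P^2}{R^{2-\alpha}}-\tfrac{P'^2}{R'^{2-\alpha}}\Bigr)(R-R')\geq 0
\end{equation*}
for $(P,R),(P',R')\in\Rr\times(0,\infty)$. Equivalently, the $C^1$ vector field $\mathcal{B}:\Rr\times(0,\infty)\to\Rr^2$ defined by $\mathcal{B}(P,R)=\bigl(P/R^{1-\alpha},\,-P^2/(2R^{2-\alpha})\bigr)$ is to be monotone on its convex domain; since $\mathcal{B}$ is $C^1$, this follows once the symmetric part of its Jacobian is positive semi-definite everywhere.

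A direct computation yields
\begin{equation*}
\mathrm{sym}(D\mathcal{B}(P,R))=\begin{bmatrix}\dfrac{1}{R^{1-\alpha}}&-\dfrac{(2-\alpha)P}{2R^{2-\alpha}}\\[6pt]-\dfrac{(2-\alpha)P}{2R^{2-\alpha}}&\dfrac{(2-\alpha)P^2}{2R^{3-\alpha}}\end{bmatrix},
\end{equation*}
whose diagonal entries are non-negative for $\alpha<2$ and whose determinant simplifies to $\alpha(2-\alpha)P^2/(4R^{4-2\alpha})$. Both are non-negative exactly when $\alpha\in[0,2]$, which contains the hypothesis $\alpha\in(0,2)$. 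Integrating $D\mathcal{B}$ along the segment joining $(P',R')$ to $(P,R)$---which remains in $\Rr\times(0,\infty)$ by convexity---and pairing with $(P-P',R-R')$ yields the pointwise inequality, completing the proof.

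The main obstacle is twofold. First, the algebraic bookkeeping in the integration-by-parts step requires that the $\tilde q$-pieces recombine cleanly with the time-derivative contributions so that the final integrand depends only on $(P,R,P',R')$. Second, the PSD verification must be done carefully: it is precisely the range $\alpha\in(0,2)$ that renders both the $(2,2)$-entry (needing $\alpha<2$) and the determinant (needing $\alpha\in[0,2]$) non-negative simultaneously, while the $\mu$-term contributes trivially and places no restriction on $\alpha$.
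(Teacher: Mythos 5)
Your reduction via integration by parts to the pointwise inequality
\begin{equation*}
\Bigl(\tfrac{P}{R^{1-\alpha}}-\tfrac{P'}{R'^{1-\alpha}}\Bigr)(P-P')-\tfrac{1}{2}\Bigl(\tfrac{P^2}{R^{2-\alpha}}-\tfrac{P'^2}{R'^{2-\alpha}}\Bigr)(R-R')\geq 0
\end{equation*}
matches the paper exactly (cf.\ \eqref{eq: monoF1}--\eqref{eq: Ca}), and your treatment of the $\mu$-term is identical. Where you diverge is in how this pointwise inequality is established. The paper proceeds by direct algebra: it expands the left-hand side into the symmetric form
\begin{equation*}
\Bigl(\tfrac12+\tfrac{R'}{2R}\Bigr)\tfrac{P^2}{R^{1-\alpha}}+\Bigl(\tfrac12+\tfrac{R}{2R'}\Bigr)\tfrac{P'^2}{R'^{1-\alpha}}-\tfrac{PP'}{R^{1-\alpha}}-\tfrac{PP'}{R'^{1-\alpha}},
\end{equation*}
observes that the case $PP'<0$ is trivial, and for $PP'\geq 0$ invokes the Cauchy (AM--GM) inequality followed by the key factorization $\bigl(R^{2-\alpha}-R'^{2-\alpha}\bigr)\bigl(R^\alpha-R'^\alpha\bigr)\geq 0$, which is where $\alpha\in(0,2)$ enters. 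Your route instead packages the inequality as the monotonicity of the $C^1$ vector field $\mathcal{B}(P,R)=\bigl(P/R^{1-\alpha},\,-P^2/(2R^{2-\alpha})\bigr)$ on the convex set $\Rr\times(0,\infty)$, reduces it to positive semi-definiteness of $\mathrm{sym}(D\mathcal{B})$, and your computations of the entries and the determinant $\alpha(2-\alpha)P^2/(4R^{4-2\alpha})$ are correct. This avoids the case split on the sign of $PP'$ entirely and makes the role of $\alpha\in(0,2)$ transparent through the determinant and $(2,2)$-entry. The trade-off is a small technical point the paper sidesteps: $\mathcal{B}$ is not a gradient field for $\alpha\neq 0$ (its Jacobian is not symmetric), so one cannot appeal to convexity of a potential; you correctly use the integral-of-the-Jacobian-along-a-segment argument, which only needs $\mathrm{sym}(D\mathcal{B})\succeq 0$, and the line segment stays in $\{R>0\}$ by convexity. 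Both proofs are correct; yours is more systematic and arguably more illuminating, while the paper's is self-contained and elementary.

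One minor wording issue: you say ``after subtracting the analogous identity with the roles of $(\varphi,q_1)$ and $(\psi,q_2)$ swapped,'' but no such subtraction is needed --- the left-hand side of \eqref{eq: monoA} already carries the difference $A[\varphi,q_1]-A[\psi,q_2]$, and the displayed reduction follows directly from integration by parts plus the recombination of the $F_2$ term with the $(\cdot)_t$ piece, exactly as in \eqref{id: 1}. This does not affect the validity of your argument.
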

\begin{proof}
        Fix $(\varphi,q_1)$, $(\psi,q_2) \in   {{D(A)=\mathcal{H}_0^+\times C^1([0,T])}}$. 
        By   \eqref{def_A}, we have
        \begin{equation}\label{eq: mono}
        \begin{aligned}
        \left( A\begin{bmatrix}
        \varphi \\q_1
        \end{bmatrix}-A\begin{bmatrix}
        \psi \\q_2
        \end{bmatrix}, \begin{bmatrix}
        \varphi \\q_1
        \end{bmatrix}-\begin{bmatrix}
        \psi \\q_2
        \end{bmatrix}\right)_{L^2\times L^2}&=
      \int_{0}^{T}\!\!\!\int_{\Tt}\left( F_1(\varphi,q_1)-F_1(\psi,q_2)\right)(\varphi-\psi)\,\dx\dt\\&\quad+\int_{0}^{T}\!\!\!\int_{\Tt}\left( F_2(\varphi,q_1)-F_2(\psi,q_2)\right)(q_1-q_2)\,\dx\dt.
        \end{aligned}
        \end{equation}
        Integrating by parts, we obtain
        \begin{equation}\label{eq: monoF1}
        \begin{aligned}
        &\int_{0}^{T}\!\!\!\int_{\Tt}\left( F_1(\varphi,q_1)-F_1(\psi,q_2)\right)(\varphi-\psi)\,\dx\dt\\
 &\quad=\int_{0}^{T}\!\!\!\int_{\Tt}\left( \frac{\varphi_t+q_1}{ (\varphi_x+1)^{1-\alpha}}- \frac{\psi_t+q_2}{ (\psi_x+1)^{1-\alpha}}\right) (\varphi_t-\psi_t)\,\dx\dt\\&\quad\quad-\int_{0}^{T}\!\!\!\int_{\Tt}\frac{1}{2}\left(\frac{(\varphi_t+q_1)^2}{ (\varphi_x+1)^{2-\alpha}}-\frac{(\psi_t+q_2)^2}{ (\psi_x+1)^{2-\alpha}} \right)(\varphi_x-\psi_x)\,\dx\dt
\\&\quad\quad+\int_{0}^{T}\!\!\!\int_{\Tt} (\left( \varphi_x+1\right)^\mu-\left(
\psi_x+1\right)^\mu)(\varphi_x-\psi_x)\,\dx\dt.
        \end{aligned}
        \end{equation}
        Because $\mu>0$, $z\mapsto z^\mu$ is monotone increasing  in $\Rr_0^+$; thus,         \begin{equation}\label{term3}
        \left( \left( \varphi_x+1\right)^\mu-\left( \psi_x+1\right)^\mu\right) ((\varphi_x+1)-(\psi_x+1))\geq 0.
        \end{equation}
        Recalling \eqref{def-F1F2},  we combine the first term on the right-hand side of \eqref{eq: monoF1} with the second term on the right-hand side of \eqref{eq: mono} to get
        \begin{equation}\label{id: 1}
        \begin{split}
        &\int_{0}^{T}\!\!\!\int_{\Tt}\bigg[ \bigg( \frac{\varphi_t+q_1}{ (\varphi_x+1)^{1-\alpha}}- \frac{\psi_t+q_2}{ (\psi_x+1)^{1-\alpha}}\bigg) (\varphi_t-\psi_t)  +\left( F_2(\varphi,q_1)-F_2(\psi,q_2)\right)(q_1-q_2)\bigg]\,\dx\dt\\&\quad=\int_{0}^{T}\!\!\!\int_{\mathbb{T}} \left( \frac{\varphi_t+q_1}{ (\varphi_x+1)^{1-\alpha}}- \frac{\psi_t+q_2}{ (\psi_x+1)^{1-\alpha}}\right) ((\varphi_t+q_1)-(\psi_t+q_2))\,\dx\dt.
        \end{split}
        \end{equation}
        Using \eqref{id: 1} and \eqref{eq: monoF1} in \eqref{eq: mono} and taking into account \eqref{term3}, we obtain
that        \begin{equation}\label{eq: mono2}
        \begin{split}
        &\left( A\begin{bmatrix}
        \varphi \\q_1
        \end{bmatrix}-A\begin{bmatrix}
        \psi \\q_2
        \end{bmatrix}, \begin{bmatrix}
        \varphi \\q_1
        \end{bmatrix}-\begin{bmatrix}
        \psi \\q_2
        \end{bmatrix}\right)\\&\quad\geq\int_{0}^{T}\!\!\!\int_{\Tt} \left( \frac{\varphi_t+q_1}{ (\varphi_x+1)^{1-\alpha}}- \frac{\psi_t+q_2}{ (\psi_x+1)^{1-\alpha}}\right) ((\varphi_t+q_1)-(\psi_t+q_2))\,\dx\dt\\&\qquad-\int_{0}^{T}\!\!\!\int_{\Tt}\frac{1}{2}\left(\frac{(\varphi_t+q_1)^2}{ (\varphi_x+1)^{2-\alpha}}-\frac{(\psi_t+q_2)^2}{ (\psi_x+1)^{2-\alpha}} \right)((\varphi_x+1)-(\psi_x+1)) \dx\dt.
        \end{split}
        \end{equation}
        To prove that the right-hand side of \eqref{eq: mono2}
        is nonnegative, which yields  \eqref{eq: monoA},  it suffices to show that
        \begin{equation}\label{eq: Ca}
        \begin{split}
        0&\leq\left( \frac{\varphi_t+q_1}{ (\varphi_x+1)^{1-\alpha}}- \frac{\psi_t+q_2}{ (\psi_x+1)^{1-\alpha}}\right) ((\varphi_t+q_1)-(\psi_t+q_2))\\&\quad-\frac{1}{2}\left(\frac{(\varphi_t+q_1)^2}{ (\varphi_x+1)^{2-\alpha}}-\frac{(\psi_t+q_2)^2}{ (\psi_x+1)^{2-\alpha}} \right)((\varphi_x+1)-(\psi_x+1))\\&=\left( \frac{1}{2}+ \frac{\psi_x+1}{2(\varphi_x+1)}\right)\frac{(\varphi_t+q_1)^2}{ (\varphi_x+1)^{1-\alpha}}+\left( \frac{1}{2}+ \frac{\varphi_x+1}{2(\psi_x+1)}\right)\frac{(\psi_t+q_2)^2}{ (\psi_x+1)^{1-\alpha}}\\&\quad-\frac{(\varphi_t+q_1)(\psi_t+q_2)}{ (\varphi_x+1)^{1-\alpha}}- \frac{(\psi_t+q_2)(\varphi_t+q_1)}{ (\psi_x+1)^{1-\alpha}}
        \end{split}
        \end{equation}
        in $[0,T]\times\Tt$. Because $\varphi_x+1>0$ and $\psi_x+1>0$
in $[0,T]\times\Tt$, we have that \eqref{eq: Ca} holds when $\varphi_t+q_1$ and $\psi_t+q_2$ have different signs. Thus, to complete the proof, we are left to prove  \eqref{eq: Ca} in the case where $\varphi_t+q_1$ and $\psi_t+q_2$ have the same sign.

Assume that  $(\varphi_t+q_1)(\psi_t+q_2)\geq
0$. Note that the  Cauchy inequality implies that
        \begin{equation}\label{Cauchy}
        \begin{split}
        &\left( \left( \frac{1}{2}+ \frac{\psi_x+1}{2(\varphi_x+1)}\right)\frac{(\varphi_t+q_1)^2}{ (\varphi_x+1)^{1-\alpha}}+\left( \frac{1}{2}+ \frac{\varphi_x+1}{2(\psi_x+1)}\right)\frac{(\psi_t+q_2)^2}{ (\psi_x+1)^{1-\alpha}}\right) ^2\\ &\quad\geq 4\left( \frac{1}{2}+ \frac{\psi_x+1}{2(\varphi_x+1)}\right)\left( \frac{1}{2}+ \frac{\varphi_x+1}{2(\psi_x+1)}\right)\frac{(\varphi_t+q_1)^2(\psi_t+q_2)^2}{(\varphi_x+1)^{1-\alpha}(\psi_x+1)^{1-\alpha}}.
        \end{split}
        \end{equation}
        Taking into account that $\varphi_x+1>0$ and $\psi_x+1>0$, and  recalling that $0<\alpha<2$, we get 
        \begin{equation*}
        \begin{split}
        &4\left( \frac{1}{2}+ \frac{\psi_x+1}{2(\varphi_x+1)}\right)\left( \frac{1}{2}+ \frac{\varphi_x+1}{2(\psi_x+1)}\right)\\&\quad-(\varphi_x+1)^{1-\alpha}(\psi_x+1)^{1-\alpha}\left( \frac{1}{(\varphi_x+1)^{1-\alpha}}+ \frac{1}{(\psi_x+1)^{1-\alpha}}\right)^2\\=\,\,&
        2+\frac{\varphi_x+1}{\psi_x+1}+\frac{\psi_x+1}{\varphi_x+1}-\left(\left( \frac{\varphi_x+1}{\psi_x+1}\right) ^{1-\alpha}+2+\left( \frac{\psi_x+1}{\varphi_x+1}\right) ^{1-\alpha}\right)\\=\,\,&\frac{(\varphi_x+1)^{2-\alpha}-(\psi_x+1)^{2-\alpha}}{(\psi_x+1)(\varphi_x+1)^{1-\alpha}}+ \frac{(\psi_x+1)^{2-\alpha}-(\varphi_x+1)^{2-\alpha}}{(\varphi_x+1)(\psi_x+1)^{1-\alpha}}\\=\,\,&\frac{1}{(\varphi_x+1)(\psi_x+1)}\left( (\varphi_x+1)^{2-\alpha}-(\psi_x+1)^{2-\alpha}\right) \left( (\varphi_x+1)^{\alpha}-(\psi_x+1)^{\alpha}\right)\geq 0.
        \end{split}
        \end{equation*}
Consequently, 
\begin{equation*}
\begin{aligned}
&4\left( \frac{1}{2}+
\frac{\psi_x+1}{2(\varphi_x+1)}\right)\left( \frac{1}{2}+ \frac{\varphi_x+1}
{2(\psi_x+1)}\right)\frac{(\varphi_t+q_1)^2(\psi_t+q_2)^2}
{(\varphi_x+1)^{1-\alpha}(\psi_x+1)^{1-\alpha}} \\
&\quad\geq \left(
\frac{1}{(\varphi_x+1)^{1-\alpha}}+ \frac{1}{(\psi_x+1)^{1-\alpha}}\right)^2
(\varphi_t+q_1)^2(\psi_t+q_2)^2. 
\end{aligned}
\end{equation*}      
Using the preceding estimate in \eqref{Cauchy} first, and then   taking the square root in the resulting estimate recalling 
that  $(\varphi_t+q_1)(\psi_t+q_2)\geq
0$, we obtain%
\begin{equation*}
\begin{aligned}
&\left( \frac{1}{2}+ \frac{\psi_x+1}{2(\varphi_x+1)}\right)\frac{(\varphi_t+q_1)^2}{
(\varphi_x+1)^{1-\alpha}}+\left( \frac{1}{2}+ \frac{\varphi_x+1}{2(\psi_x+1)}\right)\frac{(\psi_t+q_2)^2}{
(\psi_x+1)^{1-\alpha}}\\&\quad\geq\frac{(\varphi_t+q_1)(\psi_t+q_2)}{
(\varphi_x+1)^{1-\alpha}}+ \frac{(\psi_t+q_2)(\varphi_t+q_1)}{
(\psi_x+1)^{1-\alpha}}.
\end{aligned}
\end{equation*}
This completes the proof of \eqref{eq: Ca}.
\end{proof}

\subsection{Weak Solutions}
Next, based on the monotonicity of the operator \(A\) proved
in Proposition~\ref{mon-A}, we  introduce a notion of  weak solutions for the system \eqref{opA: 0}
and prove their existence. This notion  mimics the notion of  solutions to the weak variational inequality associated with a monotone operator (see \cite{KiSt00}).  This approach based on monotonicity
 for solving MFGs  was introduced in \cite{FG2} and further developed in \cite{FGT1,FeGoTa20}.

 \begin{definition}\label{def-weak} We say that a pair $(\varphi,q)$
 is a weak solution to \eqref{opA: 0} (or, equivalently,  to Problem~\ref{con-var})  if there is $\kappa>1$ such that $(\varphi,q)\in \mathcal{A}_0^{\kappa}\times
L^\kappa([0,T])$  (see \eqref{def-A_0})\ and, for
all $(\psi,\varpi)\in D(A)$, we   have
        \begin{equation}\label{def--weak-sol}
        \left\langle A\begin{bmatrix}
        \psi \\ \varpi
        \end{bmatrix}
        , \begin{bmatrix}
        \psi \\ \varpi
        \end{bmatrix}-\begin{bmatrix}
        \varphi \\q
        \end{bmatrix}\right\rangle_{L^{\kappa'}\times L^{\kappa'},
L^\kappa \times L^k}\geq 0.
        \end{equation}
        \end{definition}

Next, we prove a priori estimates for the classical solutions of the system \eqref{opA: 0}, which we later extend to weak solutions.

\begin{proposition} \label{apri-1}Let  $A$ be given by \eqref{def_A}. Suppose that Assumption~\ref{assumtionOnBoundsA} holds and that $(\varphi,q)\in
 D(A)$ solves \eqref{opA: 0} with $\alpha<\mu+1$. Then, there exists a positive constant, $C$, independent  of \((\varphi,q)\), such that
        \begin{equation*}
        \int_{0}^{T}\!\!\!\int_{\mathbb{T}} ( \varphi_x+1)^{\mu+1}\,\dx\dt\leq C.
        \end{equation*}
Moreover, for \(\alpha\in(0,2)\),
\begin{align*}
&\int_{0}^{T}\!\!\!\int_{\mathbb{T}}\Big(|\varphi_t|^\frac{2(\mu+1)}{\mu+2-\alpha}+|q|^\frac{2(\mu+1)}{\mu+2-\alpha}\Big)\,\dx\dt\leq
C \qquad \text{if } \alpha\in(0,1],\\
& \int_{0}^{T}\!\!\!\int_{\mathbb{T}}\Big(|\varphi_t|^\frac{2(\mu+1)}{\mu+3-\alpha}+|q|^\frac{2(\mu+1)}{\mu+3-\alpha}\Big)\,\dx\dt\leq
C \qquad \text{if } \alpha\in[1,2).
\end{align*}
       \end{proposition}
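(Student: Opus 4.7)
The plan is to derive an energy identity for the solution by using a smooth boundary-compatible test function, and then to absorb the resulting right-hand side into coercive quadratic and \((\mu+1)\)-homogeneous terms using Young's inequality and the hypothesis \(\alpha<\mu+1\).

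First I would take the explicit reference field \(\varphi^0\) from \eqref{phiTest} (viewed as an element of \(\mathcal{H}_0^+\) by virtue of \eqref{eq:bddffixt} and \eqref{boabove}), multiply \(F_1(\varphi,q)=0\) by \(\varphi-\varphi^0\) and integrate on \([0,T]\times\Tt\), multiply \(F_2(\varphi,q)=0\) by \(q\) and integrate on \([0,T]\), and add the two. Integration by parts in time produces no boundary contribution because \((\varphi-\varphi^0)|_{t=0}=(\varphi-\varphi^0)|_{t=T}=0\); spatial boundary terms vanish by periodicity. Using the algebraic identities \(\varphi_x=(\varphi_x+1)-1\) and \(\varphi_x-\varphi^0_x=(\varphi_x+1)-(\varphi^0_x+1)\) to regroup terms, everything collapses to the energy identity
\begin{align*}
&\frac{1}{2}\int_0^T\!\!\!\int_\Tt\frac{(\varphi_t+q)^2}{(\varphi_x+1)^{1-\alpha}}\,\dx\dt+\frac{1}{2}\int_0^T\!\!\!\int_\Tt\frac{(\varphi_t+q)^2(\varphi^0_x+1)}{(\varphi_x+1)^{2-\alpha}}\,\dx\dt+\int_0^T\!\!\!\int_\Tt(\varphi_x+1)^{\mu+1}\,\dx\dt\\
&\qquad=\int_0^T\!\!\!\int_\Tt\frac{(\varphi_t+q)\varphi^0_t}{(\varphi_x+1)^{1-\alpha}}\,\dx\dt+\int_0^T\!\!\!\int_\Tt(\varphi_x+1)^\mu(\varphi^0_x+1)\,\dx\dt.
\end{align*}

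Next, to close the estimate, I would bound the right-hand side. For the mixed term I would use the symmetric splitting
\begin{equation*}
\frac{(\varphi_t+q)\varphi^0_t}{(\varphi_x+1)^{1-\alpha}}=\frac{(\varphi_t+q)(\varphi^0_x+1)^{1/2}}{(\varphi_x+1)^{(2-\alpha)/2}}\cdot\frac{\varphi^0_t(\varphi_x+1)^{\alpha/2}}{(\varphi^0_x+1)^{1/2}}
\end{equation*}
followed by Young's inequality with parameter \(\epsilon\): the first factor is absorbed into the second term on the left, while the second factor is bounded by \(C(\varphi_x+1)^\alpha\), using \(\|\varphi^0_t\|_\infty\le k_2\) and \(\varphi^0_x+1\ge k_0>0\) from \eqref{eq:bddffixt}. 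The last right-hand-side term is bounded by \(k_1(\varphi_x+1)^\mu\). Since \(\alpha<\mu+1\) and \(\mu<\mu+1\), another application of Young's inequality gives \((\varphi_x+1)^\alpha+(\varphi_x+1)^\mu\le\eta(\varphi_x+1)^{\mu+1}+C_\eta\); choosing \(\epsilon,\eta\) small enough to absorb into the left side yields the uniform bound \(\int_0^T\!\!\int_\Tt(\varphi_x+1)^{\mu+1}\,\dx\dt\le C\), together with uniform control of both weighted quadratic quantities
\begin{equation*}
\int_0^T\!\!\!\int_\Tt\frac{(\varphi_t+q)^2}{(\varphi_x+1)^{1-\alpha}}\,\dx\dt\le C,\qquad\int_0^T\!\!\!\int_\Tt\frac{(\varphi_t+q)^2}{(\varphi_x+1)^{2-\alpha}}\,\dx\dt\le C
\end{equation*}
(the second using \(\varphi^0_x+1\ge k_0\)).

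To pass from these weighted controls to \(L^p\) bounds on \(\varphi_t\) and \(q\), I would apply Hölder's inequality to the factorization \(|\varphi_t+q|^p=|\varphi_t+q|^p(\varphi_x+1)^{-p\theta/2}\cdot(\varphi_x+1)^{p\theta/2}\), taking \(\theta=1-\alpha\) when \(\alpha\in(0,1]\) and \(\theta=2-\alpha\) when \(\alpha\in[1,2)\). Forcing the conjugate weight to land exactly on \((\varphi_x+1)^{\mu+1}\) requires \(p\theta/(2-p)=\mu+1\), which is solved by \(p=2(\mu+1)/(\mu+2-\alpha)\) and \(p=2(\mu+1)/(\mu+3-\alpha)\) in the two cases respectively. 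Combining the two uniform bounds then gives \(\int_0^T\!\!\int_\Tt|\varphi_t+q|^p\,\dx\dt\le C\). Finally, since \(q\) depends only on \(t\) and \(\int_\Tt\varphi_t\,\dx=0\) by \eqref{derivph}, the identity \(q(t)=\int_\Tt(\varphi_t+q)\,\dx\) combined with Jensen's inequality yields \(\int_0^T|q|^p\,\dt\le C\), and the triangle inequality then transfers the bound to \(\varphi_t\).

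The main technical obstacle is the Young-type splitting in the second step: the exponent \(1-\alpha\) changes sign at \(\alpha=1\), so \((\varphi_x+1)^{1-\alpha}\) is singular from opposite directions on either side of this threshold. The interpolation must be tuned so that the surviving power of \((\varphi_x+1)\) on the right-hand side is exactly \(\alpha\), which is strictly below the coercive power \(\mu+1\) precisely because of the hypothesis \(\alpha<\mu+1\); this is the only place where that assumption is used. The transition at \(\alpha=1\) also dictates which of the two weighted quadratic bounds is the right one to feed into Hölder, and thereby produces the dichotomy in the statement.
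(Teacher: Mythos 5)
Your proof is correct and follows essentially the same route as the paper: test the pointwise equations $F_1(\varphi,q)=0$ and $F_2(\varphi,q)=0$ against $(\varphi-\varphi^0,q)$, integrate by parts to obtain the same energy identity, absorb the mixed term by the splitting $(\varphi_x+1)^{1-\alpha}=(\varphi_x+1)^{(2-\alpha)/2}(\varphi_x+1)^{-\alpha/2}$ together with Young and the hypothesis $\alpha<\mu+1$, and then pass from the weighted quadratic controls and the $(\mu+1)$-bound to the $L^p$ statements, treating $q$ through the vanishing $x$-average of $\varphi_t$ and Jensen. The only cosmetic difference is that you phrase the final interpolation as a Hölder argument with exponent pair $(2/p,2/(2-p))$, whereas the paper invokes the Young-inequality computation from Proposition~\ref{ex1} with the appropriate parameter choices; these are equivalent.
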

       
\begin{proof} Because $(\varphi,q)$ solves \eqref{opA: 0} pointwise, we have
        \begin{equation}\label{eq: energy}
        \begin{split}
        \left( A\begin{bmatrix}
        \varphi \\q     
        \end{bmatrix}, \begin{bmatrix}
        \varphi \\q
        \end{bmatrix}\right)_{L^2\times L^2} =  \left( A\begin{bmatrix}
        \varphi \\q     
        \end{bmatrix}, \begin{bmatrix}
        \varphi^0 \\q^0
        \end{bmatrix}\right)_{L^2\times L^2} 
        \end{split},
        \end{equation}
        where $q^0=0$ and $\varphi^0$ defined by \eqref{phiTest}.
        Recalling \eqref{def-F1F2} and \eqref{def_A}, an integration by parts yields           \begin{equation}\label{eq-equal1}
        \begin{split}
        \left( A\begin{bmatrix}
        \varphi \\q     
        \end{bmatrix}, \begin{bmatrix}
        \varphi \\q
        \end{bmatrix}\right)_{L^2\times L^2} =-&\int_{\Tt} \frac{\varphi_t+q}{ (\varphi_x+1)^{1-\alpha}}\varphi\,\dx\bigg|_0^T +\int_{0}^{T}\!\!\!\int_{\Tt}\frac{(\varphi_t+q)^2}{ (\varphi_x+1)^{1-\alpha}}\,\dx\dt\\-&\frac{1}{2}\int_{0}^{T}\!\!\!\int_{\Tt} \frac{(\varphi_t+q)^2}{ (\varphi_x+1)^{2-\alpha}}\varphi_x
\,\dx\dt +\int_{0}^{T}\!\!\!\int_{\Tt}(\varphi_x+1)^{\mu}\varphi_x\,\dx\dt\\=-&\int_{\Tt} \frac{\varphi_t+q}{ (\varphi_x+1)^{1-\alpha}}\varphi\,\dx\bigg|_0^T +\frac{1}{2}\int_{0}^{T}\!\!\!\int_{\Tt}\frac{(\varphi_t+q)^2}{ (\varphi_x+1)^{1-\alpha}}\dx\dt\\+&\frac{1}{2}\int_{0}^{T}\!\!\!\int_{\Tt} \frac{(\varphi_t+q)^2}{ (\varphi_x+1)^{2-\alpha}}\,\dx\dt+\int_{0}^{T}\!\!\!\int_{\Tt}
\big((\varphi_x+1)^{\mu+1}-(\varphi_x+1)^{\mu}\big)\,\dx\dt.
        \end{split}
        \end{equation}
        Because $q^0=0$,  we similarly obtain that
               \begin{equation}\label{eq-equal2}
        \begin{split}
        \left( A\begin{bmatrix}
        \varphi \\q     
        \end{bmatrix}, \begin{bmatrix}
        \varphi^0 \\q^0
        \end{bmatrix}\right)_{L^2\times L^2} =-&\int_{\Tt} \frac{\varphi_t+q}{ (\varphi_x+1)^{1-\alpha}}\varphi^0\,\dx\bigg|_0^T +\int_{0}^{T}\!\!\!\int_{\Tt}\frac{(\varphi_t+q)\varphi^0_t}{ (\varphi_x+1)^{1-\alpha}}\,\dx\dt\\-&\frac{1}{2}\int_{0}^{T}\!\!\!\int_{\Tt}\frac{(\varphi_t+q)^2}{ (\varphi_x+1)^{2-\alpha}}(\varphi^0_x+1)\,\dx\dt +\frac{1}{2}\int_{0}^{T}\!\!\!\int_{\Tt}\frac{(\varphi_t+q)^2}{ (\varphi_x+1)^{2-\alpha}}\,\dx\dt\\+&\int_{0}^{T}\!\!\!\int_{\Tt}\big((\varphi_x+1)^{\mu}(\varphi^0_x+1)-(\varphi_x+1)^{\mu}\big)\,\dx\dt.
        \end{split}
        \end{equation}
        Then, from \eqref{eq-equal1},  \eqref{eq-equal2}, and \eqref{eq: energy}, together with the fact that \(\varphi - \varphi^0=0\)
on \(\{0,T\}\times\Tt\), we get
        \begin{equation*}
        \begin{split}
        &\int_{0}^{T}\!\!\!\int_{\Tt}\bigg(\frac{1}{2} \frac{(\varphi_t+q)^2}{ (\varphi_x+1)^{1-\alpha}}+\frac{1}{2}\frac{(\varphi_t+q)^2}{ (\varphi_x+1)^{2-\alpha}}(\varphi^0_x+1) +(\varphi_x+1)^{\mu+1}\bigg)\,\dx\dt\\ &\quad= \int_{0}^{T}\!\!\!\int_{\Tt}\bigg(\frac{(\varphi_t+q)\varphi^0_t }{ (\varphi_x+1)^{\frac{2-\alpha}{2}}(\varphi_x+1)^{-\frac{\alpha}{2}}}+(\varphi_x+1)^{\mu}(\varphi^0_x+1)\bigg)\,\dx\dt.
        \end{split}
        \end{equation*}
        Next,  using \eqref{eq:bddffixt} (in view of Assumption \ref{assumtionOnBoundsA}), Young's inequality, and recalling that $\alpha< \mu+1$, we deduce that
        \begin{equation*}
        \begin{split}
        &\int_{0}^{T}\!\!\!\int_{\Tt} \bigg( \frac{1}{2}\frac{(\varphi_t+q)^2}{ (\varphi_x+1)^{1-\alpha}}+\frac{k_0}{2}\frac{(\varphi_t+q)^2}{ (\varphi_x+1)^{2-\alpha}}+(\varphi_x+1)^{\mu+1}\bigg)\,\dx\dt\\&\quad\leq
        \int_{0}^{T}\!\!\!\int_{\Tt}\bigg(\frac{k_0}{4}\frac{(\varphi_t+q)^2}{ (\varphi_x+1)^{2-\alpha}}+\frac{|\varphi^0_{t}|^2}{k_0}(\varphi_x+1)^{\alpha}+k_1(\varphi_x+1)^{\mu}\bigg)\,\dx\dt\\&\quad\leq
        \int_{0}^{T}\!\!\!\int_{\Tt}\bigg(\frac{k_0}{4}\frac{(\varphi_t+q)^2}{ (\varphi_x+1)^{2-\alpha}}+\frac{1}{2}(\varphi_x+1)^{\mu+1}+C\bigg)\,\dx\dt,
        \end{split}
        \end{equation*}
        where \(C\) depends only on \(\alpha\), \(\mu\), \(T\),
        \(k_0\), and \(k_1\). Thus, we have
        \begin{equation*}
        \begin{split}
        \int_{0}^{T}\!\!\!\int_{\Tt}\bigg(\frac{(\varphi_t+q)^2}{ (\varphi_x+1)^{1-\alpha}}+\frac{(\varphi_t+q)^2}{ (\varphi_x+1)^{2-\alpha}}+(\varphi_x+1)^{\mu+1}\bigg)\,\dx\dt\leq C.
        \end{split}
        \end{equation*}
        Finally,  arguing as in the proof of Proposition~\ref{ex1}
to obtain  \eqref{qLnorm} and \eqref{phitrnorm}, with  \(\sigma=\frac{2(\mu+1)}{\mu
+2 -\alpha}\), \(\gamma=\mu+1\),  \(\beta= 2-\alpha\) if \(\alpha\in
(0,1]\), and \(\beta= 3-\alpha\) if \(\alpha\in
(1,2)\),  concludes the proof of Proposition~\ref{apri-1}.
\end{proof}

To prove the existence of solutions for the system \eqref{opA: 0} of non-linear equations, we consider a regularized problem.
More precisely,   we fix  \(\epsi>0\) and study the following regularization of \eqref{eq: mot-}--\eqref{first-pde'}:
\begin{equation}\label{eq: mot-2}
\begin{cases}
\varepsilon\big(\varphi+\sum_{|j|=6}\partial^{2j}\varphi\big)+F_1(\varphi,q)=0,&\quad (t,x)\in [0,T]\times\Tt,\\[.7mm]
\varepsilon (q-q^{\prime\prime})+F_2(\varphi,q)=0,&\quad t\in [0,T],
\\[.7mm]
\int_{\Tt}\varphi(t,x)\,\dx=0,&\quad t\in [0,T],
\end{cases}
\end{equation}
with boundary conditions 
\begin{equation}\label{eq: mot-2-bound}
\begin{cases}
\sum_{k=i}^{6}\frac{\partial^{2k-i}}{\partial t^{2k-i}}\frac{\partial^{2(6-k)}}{\partial x^{2(6-k)}}\varphi=0, & (t,x)\in\{0,T\}\times\Tt,\enspace  i=2,\dots, 6,\\[.7mm]
\varphi(0,x)=\int_{0}^{x}\big(m_0(\tau)-1\big)\,\d \tau- \iota_0, &x\in\Tt, \\[.7mm] \varphi(T,x)=\int_{0}^{x}\big(m_T(\tau)-1\big)\,\d \tau- \iota_T, & x\in\Tt,\\
q^\prime(0)=q^\prime(T)=0.
\end{cases}.
\end{equation}
 
In \eqref{eq: mot-2}, the differential operator $\partial^{2j}$ is   with respect
to the pair \((t,x)\); that is, $\partial^{2j}\psi=\frac{\partial^{|2j|}\psi}{\partial t^{2j_0} \partial x^{2j_1}}$ for  $ j=(j_0,j_1) \in \Nn_0 \times \Nn_0$. 

Taking into account the first equation in \eqref{ux1}, we are interested in solutions of \eqref{eq: mot-2} that satisfy $\varphi_x+1>0$. To do so, we first introduce the set 
\begin{equation}
\label{eq:Mepsi}
\begin{aligned}
\mathcal{M}^{k}_{\varepsilon} = \bigg\{&\varphi\in H^{k}([0,T]\times\Tt)
:\, \varphi_x+1\geq \epsi;\,\, \int_{\Tt}\varphi(t,x)\,\dx=0;
\\  & \quad\varphi(0,x)=\int_{0}^{x}\big(m_0(\tau)-1\big)\,\d
\tau- \iota_0;\,\,\varphi(T,x)=\int_{0}^{x}\big(m_T(\tau)-1\big)\,\d
\tau- \iota_T\bigg\}, 
\end{aligned}
\end{equation}
where $\iota_0$ and $\iota_T$ are the constants in \eqref{eq:iotas}
and \(k>1\); furthermore,  note that \(\mathcal{M}^{k}_{\varepsilon} \subset
C^2([0,T]\times\Tt)\) provided \(k\geq 4\) due to 
Morrey's embedding  theorem.
Then, we build an operator that to each $(\varphi_0,q_0)\in \mathcal{M}^{5}_{\varepsilon}\times
H^{1}([0,T])$ maps the solution \((\varphi,q)\) of the system
\begin{equation}\label{EL}
\begin{cases}
\varepsilon(\varphi+\sum_{|j|=6}\partial^{2j}\varphi)+F_1(\varphi_0,q_0)=0,&  (t,x)\in [0,T]\times\Tt,\\[1.4mm]
\sum_{j=i}^{6}\frac{\partial^{2j-i}}{\partial t^{2j-i}}\frac{\partial^{2(6-j)}}{\partial
x^{2(6-j)}}\varphi=0,& (t,x)\in\{0,T\}\times\Tt,\enspace  i=2,\dots, 6,\\[1.1mm]
\varepsilon (q-q^{\prime\prime})+F_2(\varphi_0,q_0)=0,&  t\in [0,T],\\
q^\prime(0)=q^\prime(T)=0.
\end{cases}
\end{equation}
 
A key observation is that the fixed points of this operator solve \eqref{eq: mot-2}. We further note that the first two identities in \eqref{EL} comprise the system of Euler--Lagrange equations of the functional 
\begin{equation}\label{funct-vp}
I_{(\varphi_0,q_0)}[\varphi]=\int_{0}^{T}\!\!\!\int_{\Tt}\bigg[\frac{\varepsilon}{2}\bigg( \varphi^2+\sum_{|j|=6}^{}(\partial^{j}\varphi)^2\bigg)+ F_1(\varphi_0,q_0)\varphi\bigg]\,\dx\dt. 
\end{equation}
To impose the condition $\varphi_{x}+1>0$, we consider the variational problem of finding  \(\bar \varphi\in\mathcal{M}^{6}_{\varepsilon} \) such that
\begin{equation}\label{vpp}
I_{(\varphi_0,q_0)}[\bar \varphi]=\inf\limits_{\varphi\in \mathcal{M}^{6}_{\varepsilon}} I_{(\varphi_0,q_0)}[\varphi], 
\end{equation}
where \(I_{(\varphi_0,q_0)}[\cdot]\) is the functional in \eqref{funct-vp}
and \(\mathcal{M}^{6}_{\varepsilon}\) is the set in \eqref{eq:Mepsi}
with \(k=6\).

\begin{pro}\label{prop:minpb} Assume that Assumption~\ref{assumtionOnBoundsA} holds and that \(m_0\), \(m_T \in H^{5}(\Tt)\). Fix  $(\varphi_0,q_0)\in \mathcal{M}^{5}_{\varepsilon}\times H^{1}([0,T])$ with \(0<\epsi\leq k_0\). Then,  the variational problem \eqref{vpp}
        has a unique solution,   \(\bar \varphi\in\mathcal{M}^{6}_{\varepsilon}
\).
Moreover, there exists a positive constant, \(C>0\), depending only on the problem data and on  $\varphi_0$,  $q_0$, and \(\epsi\),
 such that 
        \begin{equation}\label{m-bo}
        ||\bar \varphi||_{H^{6}([0,T]\times\Tt)}\leq C.
        \end{equation}\end{pro}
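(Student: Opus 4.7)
\emph{Plan of proof.} The strategy is to apply the direct method of the calculus of variations to the convex, quadratic-plus-linear functional $I_{(\varphi_0,q_0)}[\cdot]$ on the convex admissible set $\mathcal{M}^{6}_{\varepsilon}$. First, $\mathcal{M}^{6}_{\varepsilon}\neq\emptyset$: since $m_0,m_T\in H^5(\Tt)$ and $\varepsilon\leq k_0$, the affine interpolant $\varphi^0$ from \eqref{phiTest} belongs to $\mathcal{M}^{6}_{\varepsilon}$, because $\varphi^0_x+1=\tfrac{T-t}{T}m_0(x)+\tfrac{t}{T}m_T(x)\geq k_0\geq\varepsilon$ by Assumption~\ref{assumtionOnBoundsA} and $\varphi^0\in H^6([0,T]\times\Tt)$ by the $H^5$-regularity of $m_0,m_T$. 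Next, I would verify that $F_1(\varphi_0,q_0)\in L^2([0,T]\times\Tt)$ so that the linear term $\varphi\mapsto\int_0^T\!\!\int_\Tt F_1(\varphi_0,q_0)\varphi\,\dx\dt$ is continuous on $H^6\hookrightarrow L^2$: expanding the derivatives in \eqref{def-F1F2} produces a sum of products involving $\varphi_{0,tt}$, $\varphi_{0,tx}$, $\varphi_{0,xx}$, and $q_0'$ multiplied by powers of $\varphi_{0,x}+1\geq\varepsilon$; since $\varphi_0\in H^5\hookrightarrow C^3$ by the two-dimensional Sobolev embedding, the first three factors are continuous and bounded, and $q_0'\in L^2([0,T])$, giving $F_1(\varphi_0,q_0)\in L^2$ with a norm controlled by a constant depending on $\varphi_0$, $q_0$, and $\varepsilon^{-1}$.

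For existence, the leading quadratic term $\tfrac{\varepsilon}{2}\!\int(\varphi^2+\sum_{|j|=6}(\partial^j\varphi)^2)$ is, via Gagliardo--Nirenberg interpolation, equivalent to a full $H^6$ norm. Taking a minimizing sequence $\{\varphi^n\}\subset\mathcal{M}^{6}_{\varepsilon}$, the inequality $I_{(\varphi_0,q_0)}[\varphi^n]\leq I_{(\varphi_0,q_0)}[\varphi^0]$ combined with Young's inequality to absorb the linear term yields a uniform $H^6$-bound. Extracting a subsequence with $\varphi^n\rightharpoonup\bar\varphi$ weakly in $H^6$, the quadratic part is convex and continuous, hence weakly lower semicontinuous, while the linear part is weakly continuous, so $I_{(\varphi_0,q_0)}[\bar\varphi]\leq\liminf_n I_{(\varphi_0,q_0)}[\varphi^n]$.

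The main obstacle is checking that the pointwise constraint $\bar\varphi_x+1\geq\varepsilon$ survives passage to the weak limit, since the trace and zero-mean conditions defining $\mathcal{M}^{6}_{\varepsilon}$ are linear and automatically preserved. To handle this, I would invoke the compact embedding $H^6([0,T]\times\Tt)\hookrightarrow\hookrightarrow C^4([0,T]\times\Tt)$ in dimension two, which upgrades weak $H^6$-convergence to strong $C^4$-convergence, producing uniform convergence of $\varphi^n_x$ to $\bar\varphi_x$ and preserving the inequality. Hence $\bar\varphi\in\mathcal{M}^{6}_{\varepsilon}$ is a minimizer. Uniqueness is then immediate from strict convexity of $I_{(\varphi_0,q_0)}[\cdot]$: already the $\int\varphi^2$ contribution makes the quadratic form positive definite, so if two distinct minimizers existed their convex average would be admissible (by convexity of $\mathcal{M}^{6}_{\varepsilon}$) and strictly decrease the functional. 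Finally, the $H^6$-estimate \eqref{m-bo} follows from $I_{(\varphi_0,q_0)}[\bar\varphi]\leq I_{(\varphi_0,q_0)}[\varphi^0]$ using the coercivity above and the $L^2$-bound on $F_1(\varphi_0,q_0)$, with a constant that depends on $\varepsilon$, $\|\varphi_0\|_{H^5}$, $\|q_0\|_{H^1}$, and the data.
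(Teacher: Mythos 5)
Your proposal is correct and follows essentially the same approach as the paper: verify $\varphi^0\in\mathcal{M}^6_\varepsilon$ and $F_1(\varphi_0,q_0)\in L^2$, apply the direct method with coercivity from the quadratic term plus Gagliardo--Nirenberg interpolation, extract a weakly $H^6$-convergent subsequence, pass to the limit, and conclude uniqueness from strict convexity and the bound \eqref{m-bo} from the coercivity estimate. The only cosmetic difference is in how you preserve the constraint $\varphi_x+1\geq\varepsilon$: you invoke the compact embedding $H^6\hookrightarrow C^4$ to get uniform convergence of $\varphi^n_x$, whereas the paper uses Rellich--Kondrachov into $H^5$ together with trace continuity; both routes are valid.
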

\begin{proof} First, we prove that the  infimum in \eqref{vpp},
which we denote by \(I_0\), is finite.  Note that the function  $\varphi^0$   in \eqref{phiTest} satisfies  $\varphi^0\in\mathcal{M}^{6}_\varepsilon$ for  \(0<\epsi\leq k_0\). Because $(\varphi_0,q_0)\in \mathcal{M}^{5}_\varepsilon\times H^1([0,T])\subset C^2([0,T]\times\Tt) \times C([0,T])$, we find that   
        \begin{equation}\label{uper}
        I_0\leq I_{(\varphi_0,q_0)}[\varphi^0]\leq C^{\varepsilon}_0 \in \Rr^+.
        \end{equation}
        
On the other hand, using 
Young's inequality, we conclude that for all \(\varphi\in \mathcal{M}^{6}_\epsi\), we have
\begin{equation}
\begin{aligned}\label{lowerms}
I_{(\varphi_0,q_0)}[\varphi] &\geq \int_{0}^{T}\!\!\!\int_{\Tt}\bigg[\frac{\varepsilon}{2}\bigg(
\varphi^2+\sum_{|j|=6}^{}(\partial^{j}\varphi)^2\bigg)- \frac{1}{\epsi}(F_1(\varphi_0,q_0))^2 - \frac{\epsi}{4}\varphi^2\bigg]\,\dx\dt \\ &\geq -\int_{0}^{T} \!\!\!     \int_{\mathbb{T}}\frac{1}{\varepsilon}\left( F_1(\varphi_0,q_0)\right)
^2\,\dx\dt =-C^{\varepsilon}_1 \in\Rr^-_0. 
\end{aligned}
\end{equation}
Hence, taking the infimum over all 
 \(\varphi\in \mathcal{M}^{6}_\epsi\), we conclude that \(I_0 \geq -C^{\varepsilon}_1\). Thus, $I_0$ is finite.

        Let $\{\varphi_n\}_{n=1}^\infty\subset \mathcal{M}^{6}_\varepsilon$ be
a minimizing sequence for \eqref{vpp}; that is, $\{\varphi_n\}_{n=1}^\infty\subset
\mathcal{M}^{6}_\varepsilon$  such that
\begin{equation*}
\begin{aligned}
I_0=\lim_n I_{(\varphi_0,q_0)}[\varphi_n].
\end{aligned}
\end{equation*}
Then, using \eqref{uper} and the first estimate in \eqref{lowerms},  there exists $n_0\in \Nn$ such that, for all $n\geq n_0$, we have 
        \begin{equation}\label{ep-bound}
        \begin{aligned}
        \int_{0}^{T}\!\!\!\int_{\Tt}\bigg(\frac{\varepsilon}{4} \varphi_n^2+ \frac{\varepsilon}{2}\sum_{|j|=6}^{}(\partial^{j}\varphi_n)^2\bigg)\,\dx\dt&\leq I_{(\varphi_0,q_0)}[\varphi_n] +  \int_{0}^{T}\!\!\!\int_{\Tt} \frac{1}{\epsi}(F_1(\varphi_0,q_0))^2\,\dx\dt\\ &\leq I_{0}+1+  \int_{0}^{T}\!\!\!\int_{\Tt} \frac{1}{\epsi}(F_1(\varphi_0,q_0))^2\,\dx\dt \\&\leq \tilde C_0^\epsi \in \Rr^+. 
\end{aligned}
        \end{equation} 
        Furthermore, by the  Gagliardo--Nirenberg inequality,  for any multi-index \(j\in\Nn_0\times\Nn_0\) with  $|j|\leq 6$, we have 
        \begin{equation}\label{GGG}
        \Vert \partial^j\varphi_n\Vert ^2_{L^2([0,T]\times\Tt)}\leq C\left(\Vert \varphi_n\Vert ^2_{L^2([0,T]\times\Tt)} +\Vert D^{6}\varphi_n\Vert ^2_{L^2([0,T]\times\Tt)}\right). 
        \end{equation}
        The preceding estimates and \eqref{ep-bound} yield that $\{\varphi_n\}_{n=1}^\infty$ is bounded in $H^{6}([0,T]\times\Tt)$. Hence, there exist \(\bar \varphi\in H^{6}([0,T]\times\Tt) \) and a  subsequence, \(\{\varphi_{n_k}\}_{k=1}^\infty\), of \(\{\varphi_n\}_{n=1}^\infty\) such that
 $\varphi_{n_k}\rightharpoonup \bar\varphi$ in $H^{6}([0,T]\times\Tt)$. Moreover,
by the Rellich--Kondrachov theorem, $\varphi_{n_k}\to \bar\varphi$ in $H^{5}([0,T]\times\Tt)$. Using, in addition, the continuity of the trace, we conclude that \(\bar\varphi\in \mathcal{M}^{6}_\varepsilon\). Moreover, using  the preceding convergences and the lower-semicontinuity of the \(L^2\)-norm with respect to the weak convergence in \(L^2\), we deduce that
         \begin{equation*}
        I_0\leq I_{(\varphi_0,q_0)}[\bar\varphi]\leq \liminf\limits_{k\to\infty}I_{(\varphi_0,q_0)}[\varphi_{n_k}]=\lim_n I_{(\varphi_0,q_0)}[\varphi_n]= I_0.
        \end{equation*} 
        Thus, $\bar \varphi$ is a minimizer of \(I_{(\varphi_0,q_0)}[\cdot]\) over \(\mathcal{M}^{6}_\epsi\).  We observe further  that \eqref{m-bo}
follows from   \eqref{ep-bound} and \eqref{GGG}.
        
        Now, we prove the uniqueness of the minimizer. Suppose that $\varphi$, $\tilde{\varphi}\in \mathcal{M}^{6}_\varepsilon$ solve \eqref{vpp} with $\varphi\neq \tilde{\varphi}$. Note that $\frac{\varphi+\tilde{\varphi}}{2}\in\mathcal{M}^{6}_\varepsilon$ and $\varphi-\tilde{\varphi}\in C([0,T]\times\Tt)$ with
        \begin{equation*}
        \int_{0}^{T}\!\!\!\int_{\Tt} (\varphi-\tilde{\varphi})^2\,\dx\dt>0.
        \end{equation*}
        Then, 
        \begin{equation}\label{min-main-ineq}
        \begin{aligned}
        I_0&\leq I_{(\varphi_0,q_0)}\left[ \frac{\varphi+\tilde{\varphi}}{2}\right] \\&=\int_{0}^{T}\!\!\!\int_{\Tt} \bigg[\frac{\varepsilon}{2}\bigg( \bigg( \frac{\varphi+\tilde{\varphi}}{2}\bigg)^2 +\sum_{|j|=6}^{}\bigg( \frac{\partial^j\varphi+\partial^j\tilde{\varphi}}{2}\bigg)^2\bigg)  +F_1(\varphi_0,q_0)\frac{\varphi+\tilde{\varphi}}{2} \bigg] \,\dx\dt\\
 &=\frac{1}{2}\int_{0}^{T}\!\!\!\int_{\Tt}\bigg[\frac{\varepsilon}{2}\bigg(  \varphi^2 +\sum_{|j|=6}^{}\left( \partial^j\varphi\right)^2\bigg)  +F_1(\varphi_0,q_0)\varphi \bigg]\,\dx\dt\\ &\quad+\frac{1}{2}\int_{0}^{T}\!\!\!\int_{\Tt} \bigg[\frac{\varepsilon}{2}\bigg(  \tilde{\varphi}^2 +\sum_{|j|=6}^{}\left( \partial^j\tilde{\varphi}\right)^2\bigg)+F_1(\varphi_0,q_0)\tilde{\varphi}\bigg]\,\dx\dt \\&\quad-\frac{\varepsilon}{8}\int_{0}^{T}\!\!\!\int_{\Tt}\left(\varphi- \tilde{\varphi}\right)^2\,\dx\dt-\frac{\varepsilon}{8}\int_{0}^{T}\!\!\!\int_{\Tt}\sum_{|j|=6}^{}\left( \partial^j\varphi-\partial^j\tilde{\varphi}\right)^2\,\dx\dt\\ &<\frac{1}{2}I_{(\varphi_0,q_0)}[\varphi]+\frac{1}{2}I_{(\varphi_0,q_0)}[\tilde{\varphi}]=I_0,
        \end{aligned}
        \end{equation}
        which is a contradiction.  Hence, $\varphi=\tilde{\varphi}$.
\end{proof}

The following proposition relates the solution of \eqref{vpp} with  problem \eqref{EL}.
\begin{pro}Fix  $(\varphi_0,q_0)\in \mathcal{H}^{+}_0\times H^{1}([0,T])$. Assume that $\varphi\in \mathcal{M}^{6}_\varepsilon$ solves \eqref{vpp}, and set $\Omega_\epsi=\{ (t,x)\in (0,T)\times\Tt: \varphi(t,x)>\varepsilon\}$. Then, $\varphi$ satisfies 
        \begin{equation}\label{prop1}
        \varepsilon\left( \varphi+\sum_{|j|=6}\partial^{2j}\varphi\right) +F_1(\varphi_0,q_0)=0\quad \text{in } \Omega_\epsi.
        \end{equation}
\end{pro}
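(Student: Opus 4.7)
The plan is to obtain \eqref{prop1} as the Euler--Lagrange equation of the constrained minimization \eqref{vpp}, using inner variations supported on the open set $\Omega_\varepsilon$ on which the pointwise constraint $\varphi_x+1\geq\varepsilon$ built into $\mathcal{M}^{6}_\varepsilon$ is strictly inactive (I read $\Omega_\varepsilon$ as that set; the argument is the same on any open subset where the constraint is strict). Fix $\zeta\in C_c^\infty(\Omega_\varepsilon)$ and set $\eta=\zeta_x$. Then $\int_\Tt\eta(t,\cdot)\,\dx=0$ for every $t$, so the zero-mean condition in $\mathcal{M}^{6}_\varepsilon$ is preserved by $\varphi+s\eta$; because $\eta$ and all its derivatives vanish near $\{0,T\}\times\Tt$, both the Dirichlet data and the higher-order boundary conditions are preserved; and compactness of $\supp\eta$ inside $\Omega_\varepsilon$ yields a uniform strict bound $\varphi_x+1\geq\varepsilon+\delta$ on $\supp\eta$, so $(\varphi+s\eta)_x+1\geq\varepsilon$ for $|s|$ sufficiently small. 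Hence $\varphi+s\eta\in\mathcal{M}^{6}_\varepsilon$ for all such $s$.

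Since $\varphi$ is a minimizer, $\tfrac{\d}{\d s}I_{(\varphi_0,q_0)}[\varphi+s\eta]\big|_{s=0}=0$. The bound \eqref{m-bo} on $\|\varphi\|_{H^6}$ together with $F_1(\varphi_0,q_0)\in L^2$ (which follows from $(\varphi_0,q_0)\in\mathcal{H}^+_0\times H^1$, giving uniform positivity of $\varphi_{0,x}+1$) justifies differentiation under the integral, producing
\[
\int_0^T\!\!\!\int_\Tt\bigg[\varepsilon\,\varphi\,\eta+\varepsilon\sum_{|j|=6}(\partial^j\varphi)(\partial^j\eta)+F_1(\varphi_0,q_0)\,\eta\bigg]\,\dx\,\dt=0.
\]
Integrating by parts $2|j|$ times in each summand (no boundary contributions appear, by the compact support of $\eta$) gives
\[
\int_0^T\!\!\!\int_\Tt\eta\cdot\bigg[\varepsilon\,\varphi+\varepsilon\sum_{|j|=6}\partial^{2j}\varphi+F_1(\varphi_0,q_0)\bigg]\,\dx\,\dt=0.
\]
Substituting $\eta=\zeta_x$ and integrating by parts once more in $x$, we obtain
\[
\int_0^T\!\!\!\int_\Tt\zeta\cdot\partial_x\bigg[\varepsilon\,\varphi+\varepsilon\sum_{|j|=6}\partial^{2j}\varphi+F_1(\varphi_0,q_0)\bigg]\,\dx\,\dt=0
\]
for every $\zeta\in C_c^\infty(\Omega_\varepsilon)$. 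The fundamental lemma of the calculus of variations forces the bracket to be $x$-independent throughout $\Omega_\varepsilon$.

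The main obstacle is converting this "$x$-independent" statement into the pointwise identity \eqref{prop1}. The bracket equals some function $\lambda(t)$ on each connected component of $\Omega_\varepsilon$, which is precisely the Lagrange multiplier attached to the zero-mean constraint $\int_\Tt\varphi(t,\cdot)\,\dx=0$ that forced us to restrict to divergence-type variations $\eta=\zeta_x$. To discharge $\lambda$, I would integrate the bracket identity in $x$ over $\Tt$ and use (i) zero-mean of $\varphi$, (ii) $\int_\Tt\partial^{2j}\varphi\,\dx=0$ for every multi-index with $|j|=6$ (by periodicity when $j_1\geq 1$; by differentiating $\int_\Tt\varphi\,\dx=0$ in $t$ when $j_1=0$), obtaining $\lambda(t)\,|\Tt|=-\int_\Tt F_1(\varphi_0,q_0)\,\dx$. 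An integration by parts in \eqref{def-F1F2} rewrites this as $\lambda(t)\,|\Tt|=\partial_t F_2(\varphi_0,q_0)$, so $\lambda\equiv 0$ precisely under the compatibility built into the coupled $(\varphi_0,q_0)$-formulation of the full system, yielding \eqref{prop1} on $\Omega_\varepsilon$. A secondary point, routine to verify, is that the twelvefold integration by parts is legitimate because $\varphi\in H^6$ and $\zeta\in C_c^\infty$.
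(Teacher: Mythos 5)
Your approach is essentially the paper's route (linearize the variational inequality \eqref{b-weak} against compactly supported perturbations in $\Omega_\varepsilon$), but you are considerably more careful about the equality constraint $\int_\Tt\varphi(t,\cdot)\,\dx=0$ that is built into $\mathcal{M}^6_\varepsilon$. The paper's proof fixes an arbitrary $v_1\in C^\infty_c(\Omega)$ and asserts that $\varphi+sv_1\in\mathcal{M}^6_\varepsilon$ for $|s|$ small; this is false unless $\int_\Tt v_1(t,\cdot)\,\dx=0$ for every $t$, a condition the paper never imposes. You correctly fix this by testing with $\eta=\zeta_x$, which automatically has zero $x$-mean. (You also correctly flag that the set $\Omega_\varepsilon$ is presumably $\{\varphi_x+1>\varepsilon\}$, not $\{\varphi>\varepsilon\}$ as written; only the former makes the admissibility argument work.)

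However, the price of testing only against zero-mean perturbations is the Lagrange multiplier $\lambda(t)$, and your attempt to show $\lambda\equiv 0$ does not close. The identity $\lambda(t)|\Tt|=-\int_\Tt F_1(\varphi_0,q_0)\,\dx=\partial_t F_2(\varphi_0,q_0)$ would require the bracket to equal $\lambda(t)$ on an entire circle $\{t\}\times\Tt$, which is only guaranteed if $\Omega_\varepsilon$ contains that circle; even granting this, the right-hand side is not zero for a \emph{generic} fixed input $(\varphi_0,q_0)\in\mathcal{H}_0^+\times H^1([0,T])$ --- the proposition makes no compatibility assumption on $(\varphi_0,q_0)$, and the operator $S$ is evaluated at arbitrary such pairs when one runs the fixed-point argument. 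Moreover, "$x$-independent on $\Omega_\varepsilon$" gives a constant per $x$-connected component of each slice $\Omega_\varepsilon\cap(\{t\}\times\Tt)$, which need not be a single $\lambda(t)$. So the multiplier step is a genuine gap in your write-up. Note that this is not entirely your fault: the paper's own proof silently drops the mean constraint, and once that is repaired, \eqref{prop1} appears to hold only modulo an additive $\lambda(t)$, which is what the Euler--Lagrange theory for the constrained problem \eqref{vpp} actually predicts. Your analysis exposes this, but you should state clearly that the proof is incomplete without an extra argument (or a weakened conclusion) rather than leave the conclusion hanging on an unassumed compatibility condition.
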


\begin{proof}First, we prove that for all $v\in\mathcal{M}^{6}_\varepsilon$, we have 
        \begin{equation}\label{b-weak}
        \int_{0}^{T}\!\!\!\int_{\Tt}\bigg(\varepsilon \varphi(v-\varphi)+\epsi\sum_{|j|=6}^{}\partial^j\varphi\left( \partial^jv-\partial^j\varphi\right)  +F_1(\varphi_0,q_0)(v-\varphi )\bigg)\,\dx\dt\geq 0.
        \end{equation}
        
        Given  $\tau\in [0,1]$, we have  $\varphi+\tau(v-\varphi)=(1-\tau)\varphi+\tau v\in \mathcal{M}^{6}_\varepsilon$.  Thus, the real-valued function $i:[0,1]\to\Rr$ defined for \(\tau\in [0,1]\) by
        \begin{equation*}
        i(\tau)=I_{(\varphi_0,q_0)}[\varphi+\tau(v-\varphi)]
        \end{equation*}
        is well-defined and $C^1$. 

Because $\varphi$ solves \eqref{vpp}, \(i\) attains a minimum at \(\tau=0\); hence,  $i^\prime(0)\geq 0$. On the other hand, for $0<\tau\leq 1$,
we have        \begin{equation*}
        \begin{aligned}
        \frac{1}{\tau}(i(\tau)-i(0))&=\int_{0}^{T}\!\!\!\int_{\Tt} \bigg[ F_1(\varphi_0,q_0)(v-\varphi )+\varepsilon \bigg( \varphi(v-\varphi)+\sum_{|j|=6}^{}\partial^j\varphi\left( \partial^jv-\partial^j\varphi\right)\bigg)\bigg] \,\dx\dt\\&\quad+\varepsilon\int_{0}^{T}\!\!\!\int_{\Tt}\frac{\tau}{2}
\bigg( (v-\varphi)^2+\sum_{|j|=6}^{}\left( \partial^jv-\partial^j\varphi\right)^2\bigg)\,\dx\dt.
        \end{aligned}
        \end{equation*}
        Using the  estimate $i^\prime(0)\geq 0$ and letting $\tau\to0^+$ in the preceding identity, we obtain \eqref{b-weak}.
        
        Next, we fix $v_1\in C^\infty_c(\Omega)$ and observe that taking $s\in\Rr$ sufficiently close to $0$, we have $v=\varphi+sv_1\in\mathcal{M}^{6}_\varepsilon$. Then, by \eqref{b-weak}, we get
        \begin{equation*}
        s\int_{0}^{T}\!\!\!\int_{\Tt}\bigg[\varepsilon\bigg( \varphi v_1+\sum_{|j|=6}^{}\partial^j\varphi\,\partial^jv_1 \bigg) +F_1(\varphi_0,q_0)v_1 \bigg]\,\dx\dt\geq 0
        \end{equation*}
        for all such \(s\). Because the sign of $s$ is arbitrary, the preceding inequality implies \eqref{prop1}.
 \end{proof}

Next,  using the Lax--Milgram theorem, we prove the existence and uniqueness of solutions to
\begin{equation*}
\begin{cases}
\varepsilon( q-q^{\prime\prime})+F_2(\varphi_0,q_0)=0,\quad t\in[0,T],\\
q^\prime(0)=q^\prime(T)=0.
\end{cases}
\end{equation*} 
For $q_1$, $q_2$, $q\in H^{1}([0,T])$, we set
\begin{equation}\label{def-bi}
\begin{split}
B[q_1,q_2]=\int_{0}^{T}\varepsilon (q_1q_2+q_1^{\prime}q_2^{\prime})\,\dt
\enspace \text{ and } \enspace
\langle  f_{(\varphi_0,q_0)},q\rangle=\int_{0}^{T}F_2(\varphi_0,q_0)q\,\dt.
\end{split}
\end{equation}
We then consider the  problem of  finding $\bar q\in H^1([0,T])$ such that
\begin{equation}\label{bi}
B[\bar q,q]=\langle f_{(\varphi_0,q_0)},q\rangle \quad \text{for all }  q\in H^1([0,T]).
\end{equation}

\begin{pro} \label{prop:bipb}Fix \(\epsi>0\) and  $(\varphi_0,q_0)\in \mathcal{M}^{5}_{\varepsilon}\times H^1([0,T])$. Then, there exists a unique solution, $\bar q\in H^1([0,T])$, to \eqref{bi}. Moreover, \(\bar q\in  H^2([0,T])\) and there exists a positive constant, $C$, depending only on $\varphi_0$,  $q_0$, and \(\epsi\),  such that
        \begin{equation}\label{q-h1}
        \Vert\bar  q\Vert_{H^2([0,T])}\leq C.
        \end{equation}
\end{pro}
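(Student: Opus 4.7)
The plan is to apply the Lax–Milgram theorem to the bilinear form $B$ defined in \eqref{def-bi} on the Hilbert space $H^1([0,T])$, and then upgrade regularity by reading the equation pointwise.

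First, I would check the hypotheses of Lax–Milgram. Bilinearity is immediate; continuity follows from Cauchy–Schwarz since $|B[q_1,q_2]|\leq \varepsilon\|q_1\|_{H^1}\|q_2\|_{H^1}$; and coercivity is built in:
\begin{equation*}
B[q,q]=\varepsilon\int_0^T(q^2+(q')^2)\,\dt=\varepsilon\|q\|_{H^1([0,T])}^2.
\end{equation*}
To verify that $q\mapsto \langle f_{(\varphi_0,q_0)},q\rangle$ is a bounded linear functional on $H^1([0,T])$, I need only check that $F_2(\varphi_0,q_0)\in L^2([0,T])$. Because $\varphi_0\in\mathcal{M}^5_\epsi$, Morrey's embedding gives $\varphi_0\in C^2([0,T]\times\Tt)$ with $\varphi_{0,x}+1\geq \varepsilon>0$, and $q_0\in H^1([0,T])\hookrightarrow C([0,T])$. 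Hence the integrand in the definition of $F_2$ in \eqref{def-F1F2} is continuous and bounded on $[0,T]\times\Tt$ by a constant $M=M(\epsi,\|\varphi_0\|_{C^2},\|q_0\|_{C})$, so $\|F_2(\varphi_0,q_0)\|_{L^\infty([0,T])}\leq M$ and, in particular, it lies in $L^2([0,T])$. Lax–Milgram then yields a unique $\bar q\in H^1([0,T])$ solving \eqref{bi}.

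Next, I would extract the $H^2$ regularity and the Neumann boundary conditions from the weak formulation. Testing \eqref{bi} first against $q\in C_c^\infty((0,T))$ and integrating by parts shows that $\bar q$ satisfies $-\varepsilon\bar q''+\varepsilon\bar q=F_2(\varphi_0,q_0)$ in the distributional sense; since the right-hand side is in $L^2([0,T])$ and $\bar q\in L^2([0,T])$, we deduce $\bar q''\in L^2([0,T])$, hence $\bar q\in H^2([0,T])$. Testing then against arbitrary $q\in H^1([0,T])$, integrating by parts the term $\int_0^T\bar q' q'\,\dt$, and using the equation to cancel the interior contributions leaves $\varepsilon[\bar q'(T)q(T)-\bar q'(0)q(0)]=0$ for all such $q$, yielding $\bar q'(0)=\bar q'(T)=0$.

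Finally, for the bound \eqref{q-h1}, I would test \eqref{bi} with $q=\bar q$ and use Cauchy–Schwarz:
\begin{equation*}
\varepsilon\|\bar q\|_{H^1([0,T])}^2=B[\bar q,\bar q]=\int_0^T F_2(\varphi_0,q_0)\bar q\,\dt\leq \|F_2(\varphi_0,q_0)\|_{L^2([0,T])}\|\bar q\|_{L^2([0,T])},
\end{equation*}
which gives $\|\bar q\|_{H^1([0,T])}\leq \tfrac{1}{\varepsilon}\|F_2(\varphi_0,q_0)\|_{L^2([0,T])}\leq \tfrac{M\sqrt{T}}{\varepsilon}$. Then from $\bar q''=\bar q-\tfrac{1}{\varepsilon}F_2(\varphi_0,q_0)$ I obtain an $L^2$-bound on $\bar q''$ of the same form, and adding the pieces yields \eqref{q-h1} with $C$ depending only on $\epsi$, $\varphi_0$, and $q_0$. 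There is no real obstacle here — the only point requiring mild care is ensuring that $F_2(\varphi_0,q_0)$ is well-defined and $L^2$-bounded, which is what makes the ellipticity threshold $\varphi_{0,x}+1\geq\epsi$ (encoded in the choice of $\mathcal{M}^5_\epsi$) essential.
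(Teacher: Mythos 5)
Your proof is correct and follows essentially the same route as the paper: Lax--Milgram on $H^1([0,T])$ using the $L^\infty$ bound on $F_2(\varphi_0,q_0)$ that comes from $\varphi_{0,x}+1\geq\varepsilon$, then reading off $\bar q''=\bar q-\tfrac{1}{\varepsilon}F_2(\varphi_0,q_0)\in L^2$ for the $H^2$ regularity and bound. Your additional observation about the natural Neumann boundary conditions $\bar q'(0)=\bar q'(T)=0$ is correct but not explicitly recorded in the paper's proof of this proposition.
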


 \begin{proof} As we observed before, we have  $(\varphi_0,q_0)\in \mathcal{M}^{5}_{\varepsilon}\times H^1([0,T]) \subset C^2([0,T]\times\Tt)\times C([0,T])$. Moreover, because  $\varphi_x+1\geq \varepsilon$, we have $ F_2(\varphi_0,q_0)\in L^\infty([0,T])$. Hence,  $\langle  f_{(\varphi_0,q_0)},\cdot\rangle$ is a bounded linear functional on $H^1([0,T])$. On the other hand,  the Cauchy--Schwarz inequality and the definition of \(B\)  yield, for all  $q_1$, $q_2$, $q\in H_0^{1}([0,T])$,
that        \begin{equation}\label{lax}
        \begin{split}
        &|B[q_1,q_2]| \leq \epsi \Vert q_1\Vert_{H^1}\Vert q_2\Vert_{H^1},\\
        &\varepsilon \Vert q_1\Vert_{H^1}^2=\varepsilon (\Vert q_1\Vert_{L^2}^2+\Vert q^\prime_1\Vert_{L^2}^2)= B[q_1,q_1].
        \end{split}
        \end{equation} 
         Thus, by the Lax--Milgram theorem, we obtain the existence and uniqueness of solutions to \eqref{bi}.
        
We are left to prove \eqref{q-h1}. Let $\bar q\in H^1([0,T])$ be the solution of \eqref{bi}.  By Young's inequality, we have
        \begin{equation*}
        \begin{split}
        \varepsilon\big (\Vert\bar  q\Vert_{L^2}^2+\Vert\bar  q^\prime\Vert_{L^2}^2\big) &= B[\bar q,\bar q]=\langle  f_{(\varphi_0,q_0)},\bar q\rangle=\int_{0}^{T}F_2(\varphi_0,q_0)\bar q\,\dt\\ &\leq \frac{\varepsilon}{2}\Vert\bar
 q\Vert_{L^2}^2+\frac{T\Vert F_2(\varphi_0,q_0)\Vert_{L^\infty}^2 }{2\varepsilon}.
        \end{split}
        \end{equation*}
        The preceding inequality implies
         \begin{equation}\label{eq-q-h1}
        \Vert\bar  q\Vert_{H^1([0,T])}\leq C,
        \end{equation}
where the constant \(C\) depends only on  \(T\),   $\varphi_0$,  $q_0$, and \(\epsi\).

     Because $q\in H^1([0,T])$, $\bar{q}^{\prime\prime}$ is a distribution and, from \eqref{bi}, it follows that for any  $\phi \in C_0^\infty((0,T))$, we have
        \begin{equation*}
    \langle \bar{q}^{\prime\prime},\phi\rangle=-\int_{0}^{T}\bar{q}^\prime\phi^\prime\,\dt=-\int_{0}^{T}\left( \frac{F_2(\varphi_0,q_0)}{\varepsilon}-\bar{q}\right) \phi\,\dt. 
     \end{equation*}
     The preceding equality yields that $q^{\prime\prime}$ is a function and 
     \begin{equation}\label{second-der}
\bar{q}^{\prime\prime}=-\frac{F_2(\varphi_0,q_0)}{\varepsilon}+\bar{q}
\in L^\infty([0,T]).
     \end{equation}
     Thus,  $\bar{q}\in H^2([0,T])$   with  \eqref{q-h1}  by \eqref{eq-q-h1}  and  \eqref{second-der}. 
\end{proof}

\begin{remark}\label{rmk:onCs}
We observe that the dependence on \(\varphi_0\) and \(q_0\) of the constants in \eqref{m-bo}
and \eqref{q-h1} can be written as \( C_\epsi (\Vert \varphi_0\Vert_{H^{5}}
+ \Vert q_0\Vert_{H^1})\), where the constant \(C_\epsi\)
depends  on \(\epsi\) and \(T\) but not on \((\varphi_0,q_0)\).   
\end{remark}

Assume that Assumption~\ref{assumtionOnBoundsA} holds with
\(m_0\), \(m_T \in H^{5}(\Tt)\),  and let  \(0<\epsi\leq k_0\). In this setting, we  consider the operator $S:\mathcal{M}^{5}_\varepsilon\times H^{1}([0,T])\to \mathcal{M}^{5}_\varepsilon\times H^{1}([0,T])$ defined, for \((\varphi_0,q_0)\in\mathcal{M}^{5}_\varepsilon\times H^{1}([0,T]) \), by  
\begin{equation}\label{def_S}
S\begin{bmatrix}
\varphi _0 \\q_0
\end{bmatrix} =\begin{bmatrix}
\varphi ^*_0 \\q^*_0
\end{bmatrix},
\end{equation}
where $\varphi ^*_0$ and $q^*_0$  are  the unique solutions of \eqref{vpp} and \eqref{bi}, respectively.
\begin{definition}\label{def-wee} Let  \(m_0\), \(m_T \in H^{5}(\Tt)\)
satisfy Assumption~\ref{assumtionOnBoundsA}
  and let  \(0<\epsi\leq k_0\). A pair $(\varphi,q)\in \mathcal{M}^{6}_\varepsilon\times H^{1}([0,T])$ is   a weak solution to \eqref{eq: mot-2} if it is a  fixed point of the operator $S$ 
in \eqref{def_S}.
\end{definition}

\begin{pro}\label{con-comp} Assume that Assumption~\ref{assumtionOnBoundsA}
holds with
\(m_0\), \(m_T \in H^{5}(\Tt)\),  and let  \(0<\epsi\leq k_0\).  Then, the operator $S:\mathcal{M}^{5}_\varepsilon\times H^{1}([0,T])\to \mathcal{M}^{5}_\varepsilon\times H^{1}([0,T])$
        defined by \eqref{def_S} is continuous and compact. 
\end{pro}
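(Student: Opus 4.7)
The plan is to handle compactness via the improved regularity of $S$'s image combined with Rellich--Kondrachov, and to handle continuity via stability of minimizers under data perturbations, using the strict convexity/uniqueness results in Propositions~\ref{prop:minpb} and \ref{prop:bipb}.

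For compactness, let $\{(\varphi_0^n,q_0^n)\}_{n\in\Nn}$ be a bounded sequence in $\mathcal{M}^{5}_\varepsilon\times H^{1}([0,T])$ and set $(\varphi^n,q^n)=S(\varphi_0^n,q_0^n)$. By Propositions~\ref{prop:minpb} and \ref{prop:bipb} together with Remark~\ref{rmk:onCs}, the sequence $\{(\varphi^n,q^n)\}$ is bounded in $H^{6}([0,T]\times\Tt)\times H^{2}([0,T])$. Since $H^{6}([0,T]\times\Tt)\hookrightarrow H^{5}([0,T]\times\Tt)$ and $H^{2}([0,T])\hookrightarrow H^{1}([0,T])$ are compact by the Rellich--Kondrachov theorem, I can extract a subsequence converging strongly in $H^{5}\times H^{1}$, and by Sobolev embedding, also in $C^{3}([0,T]\times\Tt)\times C^{1/2}([0,T])$. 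Uniform convergence of $\varphi^n_x$ preserves the lower bound $\varphi_x+1\geq\varepsilon$, and the boundary/average conditions are preserved by continuity of the trace. Hence the limit lies in $\mathcal{M}^{5}_\varepsilon\times H^{1}([0,T])$, giving compactness.

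For continuity, I assume $(\varphi_0^n,q_0^n)\to(\varphi_0,q_0)$ in $\mathcal{M}^{5}_\varepsilon\times H^{1}([0,T])$ and want $(\varphi^n,q^n)\to(\varphi^*,q^*)$, where $(\varphi^*,q^*)=S(\varphi_0,q_0)$. The Sobolev embedding gives $\varphi_0^n\to\varphi_0$ in $C^{3}$ and, since $F_1$ involves only derivatives of $\varphi$ up to order two and $q,q'$, combining this with $q_0^n\to q_0$ in $H^{1}([0,T])$ yields $F_1(\varphi_0^n,q_0^n)\to F_1(\varphi_0,q_0)$ in $L^{2}([0,T]\times\Tt)$; similarly $F_2(\varphi_0^n,q_0^n)\to F_2(\varphi_0,q_0)$ uniformly (no derivatives of $q_0$ appear in $F_2$). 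By the argument in the compactness step, up to a subsequence $\varphi^n\rightharpoonup\bar\varphi$ in $H^{6}$ with $\bar\varphi\in\mathcal{M}^{6}_\varepsilon$, and $q^n\rightharpoonup\bar q$ in $H^{2}$. Passing to the limit in the variational inequality defining $\varphi^n$: for any $v\in\mathcal{M}^{6}_\varepsilon$,
\begin{equation*}
I_{(\varphi_0^n,q_0^n)}[\varphi^n]\leq I_{(\varphi_0^n,q_0^n)}[v],
\end{equation*}
the quadratic part is weakly lower semi-continuous while $\int\!\!\!\int F_1(\varphi_0^n,q_0^n)\varphi^n\,\dx\dt\to\int\!\!\!\int F_1(\varphi_0,q_0)\bar\varphi\,\dx\dt$ by strong-weak pairing; taking $\liminf$ on the left and $\lim$ on the right yields $I_{(\varphi_0,q_0)}[\bar\varphi]\leq I_{(\varphi_0,q_0)}[v]$, so $\bar\varphi$ minimizes $I_{(\varphi_0,q_0)}$. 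Uniqueness from Proposition~\ref{prop:minpb} forces $\bar\varphi=\varphi^*$. An analogous passage in the bilinear identity $B[q^n,v]=\langle f_{(\varphi_0^n,q_0^n)},v\rangle$ identifies $\bar q=q^*$ via the uniqueness in Proposition~\ref{prop:bipb}. Since the limit is unique, the full sequence converges in $H^{5}\times H^{1}$ by a standard subsequence-of-subsequence argument.

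The main obstacle is the continuity step, and specifically verifying that $F_1(\varphi_0^n,q_0^n)\to F_1(\varphi_0,q_0)$ in a space strong enough to pair against the weak limit $\bar\varphi$. The delicate point is that $F_1$ contains the term $((\varphi_{0,t}+q_0)/(\varphi_{0,x}+1)^{1-\alpha})_t$, which involves the time derivative of $q_0$; here the uniform lower bound $\varphi_{0,x}^n+1\geq\varepsilon$ (built into $\mathcal{M}^{5}_\varepsilon$) is crucial to avoid degeneracy in the denominators and to obtain uniform Lipschitz estimates for the nonlinear coefficients, after which the convergence reduces to $L^{2}$-convergence of $(q_0^n)'$ multiplied by a uniformly convergent coefficient. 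Once this is in place, the rest of the argument is largely standard direct-method bookkeeping.
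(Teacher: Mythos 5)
Your proposal is correct, and the compactness part is essentially identical to the paper's (a priori $H^{6}\times H^{2}$ bounds from Propositions~\ref{prop:minpb}--\ref{prop:bipb} and Remark~\ref{rmk:onCs}, then Rellich--Kondrachov). You also correctly isolate the technical crux: showing $F_1(\varphi_0^n,q_0^n)\to F_1(\varphi_0,q_0)$ in $L^2$, where the uniform lower bound $\varphi_{0,x}^n+1\geq\varepsilon$ built into $\mathcal{M}^5_\varepsilon$ controls the singular denominators and the $H^1$ convergence of $q_0^n$ handles the $q_0'$-bearing term; the paper handles this by splitting off that term from the rest of $F_1$ and treating each piece with dominated convergence.

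The continuity step, however, proceeds by a genuinely different route. You use the soft scheme: extract a weak $H^6\times H^2$ limit (which exists by the uniform a priori bounds), pass to the limit in the characterization of $\varphi^n$ as a constrained minimizer (strong-weak pairing for the linear term, weak lower semicontinuity for the quadratic term), identify the limit via uniqueness, and then upgrade to full-sequence convergence by the subsequence-of-subsequence argument. The paper instead produces a direct quantitative stability estimate: testing the minimality of both $\varphi_0^*$ and $\varphi_n^*$ against the midpoint $\tfrac{\varphi_0^*+\varphi_n^*}{2}$ (which lies in $\mathcal{M}^6_\varepsilon$ by convexity) and applying Young's inequality yields
\[
\frac{\varepsilon}{8}\int_0^T\!\!\!\int_{\Tt}\Big((\varphi_0^*-\varphi_n^*)^2+\sum_{|j|=6}(\partial^j\varphi_0^*-\partial^j\varphi_n^*)^2\Big)\,\dx\dt
\;\leq\;\frac{1}{2\varepsilon}\int_0^T\!\!\!\int_{\Tt}\big(F_1(\varphi_n,q_n)-F_1(\varphi_0,q_0)\big)^2\,\dx\dt,
\]
and an analogous estimate from $B[\cdot,\cdot]$ for $q_n^*-q_0^*$. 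This gives strong $H^6\times H^2$ convergence of the full sequence in one stroke, with no subsequence extraction or weak lower semicontinuity needed; your argument is more standard and more portable, while the paper's exploits the uniform convexity of the regularized energy (modulus $\varepsilon$) to deliver a cleaner, explicitly quantitative Lipschitz-type stability of the solution map. Both are valid given the $L^2$ convergence of $F_1$ and $F_2$, so there is no gap in your proposal.
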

\begin{proof}First, we prove the continuity of $S$. Let  $\varphi _0$, $\varphi _n\in \mathcal{M}^{5}_\varepsilon$ and $q_0$, $q_n\in H^{1}([0,T])$ be such that $\varphi _n\to\varphi _0$ in $H^{5}([0,T]\times\Tt)$ and $q_n\to q_0$ in $H^1([0,T])$. We aim at proving that 
 $\varphi^* _n\to\varphi ^*_0$ in $H^{5}([0,T]\times\Tt)$ and $q^*_n\to q^*_0$ in $H^{1}([0,T])$, where 
        \begin{equation*}
        \begin{bmatrix}
        \varphi ^*_0 \\q^*_0
        \end{bmatrix}=S\begin{bmatrix}
        \varphi _0 \\q_0
        \end{bmatrix} 
        \enspace \text{ and } \enspace\begin{bmatrix}
        \varphi ^*_n \\q^*_n
        \end{bmatrix} =S\begin{bmatrix}
        \varphi _n \\q_n
        \end{bmatrix}.
        \end{equation*}
By Propositions~\ref{prop:minpb}
and \ref{prop:bipb}, we then have 
\begin{equation*}
        \begin{aligned}
        &I_{(\varphi_0,q_0)}[\varphi_0^*]=\min\limits_{\varphi\in \mathcal{M}^{6}_\varepsilon} I_{(\varphi_0,q_0)}[\varphi],\quad I_{(\varphi_n,q_n)}[\varphi_n^*]=\min\limits_{\varphi\in \mathcal{M}^{6}_\varepsilon} I_{(\varphi_n,q_n)}[\varphi],
\end{aligned}
        \end{equation*}
and, for all \(q\in H^{1}([0,T]) \),

\begin{equation}\label{eq:solB}
        \begin{aligned} &B[q_0^*,q]=\langle f_{(\varphi_0,q_0)},q\rangle, \quad B[q_n^*,q]=\langle f_{(\varphi_n,q_n)},q\rangle.
        \end{aligned}
        \end{equation}
        
Using the definition of a minimizer,  it follows that        \begin{equation*}
        I_{(\varphi_0,q_0)}[ \varphi_0^*]+I_{(\varphi_n,q_n)}[ \varphi_n^*]  \leq I_{(\varphi_0,q_0)}\left[ \frac{\varphi_0^*+\varphi_n^*}{2}\right]+I_{(\varphi_n,q_n)}\left[ \frac{\varphi_0^*+\varphi_n^*}{2}\right].  
        \end{equation*}
        The preceding inequality combined  with Young's inequality gives
        \begin{equation}\label{ineq}
        \begin{aligned}
        &\frac{\varepsilon}{4} \int_{0}^{T}\!\!\!\int_{\Tt}\bigg(\left(\varphi_0^*-\varphi_n^*\right)^2 +\sum_{|j|=6}^{}\left( \partial^j\varphi_0^*- \partial^j\varphi_n^*\right)^2 \bigg) \,\dx\dt\\&\quad\leq\int_{0}^{T}\!\!\!\int_{\Tt}\frac{1}{2}\left(\varphi_0^*-\varphi_n^*\right)\big(F_1(\varphi_n,q_n)-F_1(\varphi_0,q_0)\big)\,\dx\dt\\ &\quad\leq \frac{\varepsilon}{8}\int_{0}^{T}\!\!\!\int_{\Tt}\left(\varphi_0^*-\varphi_n^*\right)^2\,\dx\dt+\frac{1}{2\varepsilon}
\int_{0}^{T}\!\!\!\int_{\Tt}(F_1(\varphi_n,q_n)-F_1(\varphi_0,q_0))^2 \bigg)\,\dx\dt.
        \end{aligned}
        \end{equation}

To prove that the last integral term  in \eqref{ineq}
converges to zero,   we first observe that (see  \eqref{def-F1F2})
\begin{equation}\label{eq:fifF1s}
\begin{aligned}
F_1(\varphi_n,q_n)-F_1(\varphi_0,q_0) =f_1 (\varphi_n,q_n,\varphi_0,q_0)
- \left( \frac{q_n}{ ((\varphi_n)_x+1)^{1-\alpha}}\right)_t 
+ \left( \frac{q_0}{ ((\varphi_0)_x+1)^{1-\alpha}}\right)_t,
\end{aligned}
\end{equation}
where
\begin{equation*}
\begin{aligned}
f_1 (\varphi_n,q_n,\varphi_0,q_0) & = - \left( \frac{(\varphi_n)_t}{ ((\varphi_n)_x+1)^{1-\alpha}}\right)_t
 +\frac{1}{2}\left(\frac{((\varphi_n)_t+q_n)^2}{ ((\varphi_n)_x+1)^{2-\alpha}}
\right)_x- (((\varphi_n)_x+1)^\mu)_x
\\
&\quad+ \left( \frac{(\varphi_0)_t}{ ((\varphi_0)_x+1)^{1-\alpha}}\right)_t
 -\frac{1}{2}\left(\frac{((\varphi_0)_t+q_0)^2}{ ((\varphi_0)_x+1)^{2-\alpha}}
\right)_x+ (((\varphi_0)_x+1)^\mu)_x.
\end{aligned}
\end{equation*}
Because \(\{\varphi_n\}_{n=1}^\infty\) and \(\{q_n\}_{n=1}^\infty\)
are bounded sequences  in $H^{5}([0,T]\times\Tt)$ and $H^1([0,T])$,
respectively, 
  we can find a positive constant,
$c$, such that
        \begin{equation}\label{boundedness}
        \sup\limits_{n\in\Nn}\big\{\Vert \varphi_0\Vert_{W^{2,\infty}}+\Vert
\varphi_n\Vert_{W^{2,\infty}}+\Vert q_0\Vert_{L^{\infty}}+\Vert
q_n\Vert_{L^{\infty}}\big\}\leq c
        \end{equation}        
by the embeddings  $\mathcal{M}^{5}_{\varepsilon}\subset
        C^{2}([0,T]\times\Tt)$ and $H^{1}([0,T])\subset
        C([0,T])$. From \eqref{boundedness} and the
uniform lower bounds \((\varphi_0)_x+1 \geq \epsi\)  and  \((\varphi_n)_x+1 \geq \epsi\), we conclude that there exists a positive constant,
\(\tilde c\), independent of \(n\in\Nn\), such that     
\begin{equation*}
\begin{aligned}
\sup_{n\in\Nn} \Vert f_1 (\varphi_n,q_n,\varphi_0,q_0)\Vert_{L^\infty}
\leq \tilde c.
\end{aligned}
\end{equation*}
Moreover, as  $\varphi _n\to\varphi _0$
in $C^2([0,T]\times\Tt)$ and $q_n\to q_0$ in $C([0,T])$, we also
have
\begin{equation*}
\begin{aligned}
\lim_{n\to\infty} f_1 (\varphi_n,q_n,\varphi_0,q_0) =0 \enspace  \text{ pointwise
in
 } [0,T]\times\Tt.
\end{aligned}
\end{equation*}
Thus, by the Lebesgue dominated convergence theorem, it follows that
\begin{equation}
\label{eq:f1to0}
\begin{aligned}
\lim_{n\to\infty}\int_{0}^{T}\!\!\!\int_{\Tt}\big| f_1 (\varphi_n,q_n,\varphi_0,q_0)\big|^2\,\dx\dt
= 0.
\end{aligned}
\end{equation}

Similarly, 
 \eqref{boundedness} and the
uniform lower bounds \((\varphi_0)_x+1 \geq \epsi\)  and  \((\varphi_n)_x+1
\geq \epsi\) yield, for some positive constant, \(\tilde C\),
independent of \(n\), that%
\begin{equation}
\label{eq:limF1}
\begin{aligned}
&\limsup_{n\to\infty}\int_{0}^{T}\!\!\!\int_{\Tt}\left| \left( \frac{q_n}{ ((\varphi_n)_x+1)^{1-\alpha}}\right)_t 
- \left( \frac{q_0}{ ((\varphi_0)_x+1)^{1-\alpha}}\right)_t\right|^2\,\dx\dt
\\
&\quad\leq \tilde C\limsup_{n\to\infty}\bigg(\int_{0}^{T}\!\!\!\int_{\Tt}
\big(|q_n -q_0|^2 + |(q_n)_t - (q_0)_t|^2\big) \,\dx\dt\\&\hskip22mm+\int_{0}^{T}\!\!\!\int_{\Tt}
|(q_0)_t|^2
 \big|((\varphi_n)_x+1)^{-1+\alpha}
- ((\varphi_0)_x+1)^{-1+\alpha} \big|^2 \,\dx\dt\\&\hskip22mm+\int_{0}^{T}\!\!\!\int_{\Tt} |q_0|^2
 \big|\big(((\varphi_n)_x+1)^{-1+\alpha}\big)_t
- \big(((\varphi_0)_x+1)^{-1+\alpha}\big)_t \big|^2\,\dx\dt\bigg)\\
&\quad=0,
\end{aligned}
\end{equation}
where we also used the Lebesgue dominated convergence theorem
together with the convergences   $\varphi _n\to\varphi _0$
in $C^2([0,T]\times\Tt)$ and $q_n\to q_0$ in $H^1([0,T])$. 

From \eqref{ineq}, \eqref{eq:fifF1s}, \eqref{eq:f1to0}, and \eqref{eq:limF1},
we deduce that
        \begin{equation*}
       \lim_{n\to\infty} \int_{0}^{T}\!\!\!\int_{\Tt}\bigg(\left(\varphi_0^*-\varphi_n^*\right)^2 +\sum_{|j|=6}^{}\left( \partial^j\varphi_0^*- \partial^j\varphi_n^*\right)^2 \bigg) \,\dx\dt= 0.
        \end{equation*} 
        Consequently, by the Gagliardo--Nirenberg interpolation inequality, we have   $\varphi_0^*\to\varphi_n^*$ in $H^{6}([0,T]\times \Tt)$.
        
On the other hand, recalling \eqref{def-bi} and \eqref{def-F1F2}, we conclude from \eqref{eq:solB} and Young's inequity that        \begin{equation*}
        \begin{aligned}
        \varepsilon\big(\Vert q_n^*-q_0^*\Vert_{L^2}^2+\Vert (q_n^*)^\prime-(q_0^*)^\prime\Vert_{L^2}^2\big)&=B[q_n^*-q_0^*,q_n^*-q_0^*]\\
        & =\int_{0}^{T}\!\!\!\int_{\Tt}({F}_2(\varphi_n,q_n)-{F}_2(\varphi_0,q_0))( q_n^*-q_0^*)\,\dx\dt\\
&\leq \frac{\varepsilon}{2} \Vert q_n^*-q_0^*\Vert_{L^2}^2+\frac{1}{2\varepsilon}\int_{0}^{T}\!\!\!\int_{\Tt}|f_2(\varphi_n,q_n,\varphi_0,q_0)|^2\,\dx\dt,
        \end{aligned}
        \end{equation*}
where, arguing as in \eqref{eq:f1to0},  
\begin{equation*}
\begin{aligned}
&\lim_{n\to\infty}\int_{0}^{T}\!\!\!\int_{\Tt}\big| f_2 (\varphi_n,q_n,\varphi_0,q_0)\big|^2\,\dx\dt
\\ &\quad= \lim_{n\to\infty}\int_{0}^{T}\!\!\!\int_{\Tt} \bigg|
  \frac{(\varphi_n)_t - q_n}{
((\varphi_n)_x+1)^{1-\alpha}} - \frac{(\varphi_0)_t - q_0}{
((\varphi_0)_x+1)^{1-\alpha}} \bigg|^2 \,\dx\dt
= 0.
\end{aligned}
\end{equation*}
Hence,   $q^\ast_n\to q^\ast_0$ in $H^1([0,T])$, which conclude the proof of the continuity of \(S\).        
        
The compactness of \(S\) follows from the estimates in \eqref{m-bo}
and \eqref{q-h1} (also see Remark~\ref{rmk:onCs})\ together with the Rellich--Kondrachov theorem.
\end{proof}

Next, we prove the existence and uniqueness of solutions to \eqref{eq: mot-2} in the sense of Definition~\ref{def-wee}.
The existence is hinged on the  following  Schaefer's fixed-point theorem (a proof of this result can be found,
for instance, in
 \cite[Theorem~6.2]{FGT1}).

\begin{theorem}\label{Sch4-1}
Let $\mathcal{C}$ be a convex and closed subset of a Banach space
such that  $0\in \mathcal{C}$. Assume that $\mathcal{S}\colon \mathcal{C}
\to \mathcal{C}$ is continuous and compact,  for which 
the set
\begin{equation*}
\Big\{
w\in \mathcal{C}\ |\ w=\lambda\, \mathcal{S}[w]\ \ \mbox{for some }\lambda
\in [0,1]
\Big\}
\end{equation*}
is bounded. Then, \(\mathcal{S}\) has a fixed point in \(\mathcal{C}\);
that is, there exists $w\in \mathcal{C}$
such that $w=\mathcal{S}[w]$.
\end{theorem}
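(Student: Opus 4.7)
The plan is to reduce Schaefer's theorem to Schauder's classical fixed-point theorem by means of a radial retraction onto a ball in which all continuation solutions must lie.

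First, invoke the hypothesis to fix a constant $M>0$ such that every $w\in \mathcal{C}$ satisfying $w=\lambda\,\mathcal{S}[w]$ for some $\lambda\in[0,1]$ obeys $\|w\|\leq M$. Define the radial retraction $R$ on the ambient Banach space by $R(z)=z$ if $\|z\|\leq M+1$ and $R(z)=(M+1)z/\|z\|$ otherwise; the map $R$ is $1$-Lipschitz. Since $0\in\mathcal{C}$ and $\mathcal{C}$ is convex, $R(z)$ is a convex combination of $0$ and $z$ whenever $z\in\mathcal{C}$, so $R$ maps $\mathcal{C}$ into $\mathcal{C}\cap\overline{B}_{M+1}(0)$. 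Set $\widetilde{\mathcal{S}}=R\circ \mathcal{S}$; as a composition of a continuous, compact map with a Lipschitz one, $\widetilde{\mathcal{S}}$ is continuous and compact, and it sends the convex, closed, bounded set $\mathcal{K}:=\mathcal{C}\cap\overline{B}_{M+1}(0)$ into itself.

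Next, apply Schauder's fixed-point theorem to $\widetilde{\mathcal{S}}:\mathcal{K}\to\mathcal{K}$ to produce $w^{*}\in\mathcal{K}$ with $w^{*}=R(\mathcal{S}[w^{*}])$. To prove that $w^{*}$ is actually a fixed point of $\mathcal{S}$, argue by contradiction: if $\|\mathcal{S}[w^{*}]\|>M+1$, then $\lambda:=(M+1)/\|\mathcal{S}[w^{*}]\|\in(0,1)$ and $w^{*}=\lambda\,\mathcal{S}[w^{*}]$, so by the standing hypothesis $\|w^{*}\|\leq M$; but by the definition of $R$ in this regime $\|w^{*}\|=M+1$, a contradiction. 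Consequently $\|\mathcal{S}[w^{*}]\|\leq M+1$, $R$ acts as the identity at $\mathcal{S}[w^{*}]$, and $w^{*}=\mathcal{S}[w^{*}]$.

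The main conceptual step, and essentially the only real obstacle, is converting the a priori bound on continuation solutions into a genuine self-map on a Schauder-compatible set; the radial retraction accomplishes this at once, and the rest is routine. A degree-theoretic alternative is available by computing the Leray--Schauder degree of $I-\lambda \mathcal{S}$ along the homotopy $\lambda\in[0,1]$ (the a priori bound ruling out fixed points on the boundary of $\overline{B}_{M+1}(0)$), but the retraction argument above is more elementary and self-contained.
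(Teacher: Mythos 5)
The paper does not actually supply a proof of this theorem: it invokes Schaefer's theorem and refers the reader to \cite[Theorem~6.2]{FGT1} for a proof. So there is no internal proof to compare against; what you have written is the classical reduction of Schaefer's theorem to Schauder's fixed-point theorem via the radial retraction, which is exactly the standard argument found in, e.g., Evans' PDE text, and it is correct.

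The only inaccuracy is the side remark that the radial retraction $R$ is $1$-Lipschitz. That is true in a Hilbert space but false in a general Banach space, where the sharp Lipschitz constant of the radial retraction is $2$. Fortunately, the argument only uses that $R$ is continuous (to preserve compactness of $\widetilde{\mathcal{S}}$), so the proof is unaffected; you should simply drop or correct the Lipschitz claim. All the substantive steps — that $R$ maps $\mathcal{C}$ into $\mathcal{K}=\mathcal{C}\cap\overline{B}_{M+1}(0)$ because $0\in\mathcal{C}$ and $\mathcal{C}$ is convex, that Schauder applies to $\widetilde{\mathcal{S}}=R\circ\mathcal{S}$ on the closed bounded convex set $\mathcal{K}$, and the dichotomy/contradiction showing the Schauder fixed point is a genuine fixed point of $\mathcal{S}$ — are correct and complete.
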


\begin{teo}\label{weak-ex-teo} Let $0<\varepsilon\leq k_0$.   Suppose that \(\alpha\in (0,2)\) satisfies  $\alpha<\mu+1$ and that Assumption~\ref{assumtionOnBoundsA} holds with  \(m_0\), \(m_T \in H^{5}(\Tt)\). Then, \eqref{eq: mot-2} has a unique weak solution in the sense of Definition~\ref{def-wee}.
\end{teo}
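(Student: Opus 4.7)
The plan is to establish existence via Schaefer's fixed-point theorem (Theorem~\ref{Sch4-1}) applied to a translated extension of the operator $S$ from \eqref{def_S}, and uniqueness by combining the monotonicity of $A$ from Proposition~\ref{mon-A} with the strict coercivity of the linear regularization.

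Since $\mathcal{M}^{5}_\varepsilon \times H^1([0,T])$ is affine rather than linear, I would first translate by the reference function $\varphi^0$ of \eqref{phiTest} (which lies in $\mathcal{M}^{6}_\varepsilon$ because $0<\varepsilon\leq k_0$) and work on the Banach space $\mathcal Y = \mathcal X\times H^1([0,T])$, where $\mathcal{X} = \{\widetilde\varphi \in H^{5}([0,T]\times \Tt) : \widetilde\varphi(0,\cdot)=\widetilde\varphi(T,\cdot)=0,\, \int_{\Tt}\widetilde\varphi\,\dx=0\}$. I would then define $\widetilde S:\mathcal Y\to\mathcal Y$ by $\widetilde S(\widetilde\varphi, q) := (\varphi^\ast - \varphi^0, q^\ast)$ with $(\varphi^\ast, q^\ast)= S(\widetilde\varphi+\varphi^0, q)$, after first extending $S$ to inputs $\widetilde\varphi+\varphi^0 \in H^5$ by replacing $\varphi_x+1$ with $\max\{\varphi_x+1,\varepsilon\}$ in the denominators of \eqref{def-F1F2}. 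The constrained minimization \eqref{vpp} nevertheless outputs a function in $\mathcal{M}^{6}_\varepsilon$, so fixed points of $\widetilde S$ correspond bijectively to those of $S$. The domain $\mathcal Y$ is a convex closed Banach space containing $0$, and Proposition~\ref{con-comp} adapts to show that $\widetilde S$ is continuous and compact.

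It then remains to verify the boundedness hypothesis of Theorem~\ref{Sch4-1}, namely that $B:= \{(\widetilde\varphi,q)\in\mathcal Y : (\widetilde\varphi,q)=\lambda \widetilde S(\widetilde\varphi,q)\text{ for some }\lambda\in[0,1]\}$ is bounded. For $(\widetilde\varphi,q)\in B$ with $\lambda\in(0,1]$, setting $\varphi = \widetilde\varphi+\varphi^0$, the variational characterizations \eqref{vpp} and \eqref{bi} yield, on the non-contact set of the obstacle,
\begin{equation*}
\varepsilon\Big(\widetilde\varphi+\textstyle\sum_{|j|=6}\partial^{2j}\widetilde\varphi\Big) + \lambda F_1(\varphi,q) = -\lambda\varepsilon\Big(\varphi^0+\textstyle\sum_{|j|=6}\partial^{2j}\varphi^0\Big)
\end{equation*}
together with $\varepsilon(q-q'') = -\lambda F_2(\varphi,q)$, while on the contact set the first equation is replaced by the one-sided inequality \eqref{b-weak} with a favorable sign. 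Pairing with $(\widetilde\varphi, q)$, integrating by parts (all boundary contributions vanish thanks to the zero data in $\mathcal{X}$ and the Neumann condition $q'(0)=q'(T)=0$), and invoking the Gagliardo--Nirenberg inequality, the left-hand sides would control $\varepsilon\bigl(\Vert\widetilde\varphi\Vert_{H^{6}}^2 + \Vert q\Vert_{H^1}^2\bigr)$. The nonlinear contributions $\int F_1(\varphi,q)\widetilde\varphi + \int F_2(\varphi,q)\,q$ are to be estimated by reproducing the integration by parts of \eqref{eq-equal1}--\eqref{eq-equal2} in the proof of Proposition~\ref{apri-1}, using $\alpha<\mu+1$ to absorb higher powers of $\varphi_x+1$ through Young's inequality and $\alpha\in(0,2)$ to handle the $\varphi_t$ and $q$ terms; cross terms with $\varphi^0$ are bounded since $\varphi^0\in H^6$ is fixed. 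This would yield a uniform-in-$\lambda$ bound on $\Vert(\widetilde\varphi,q)\Vert_{\mathcal Y}$, and Schaefer's theorem would then deliver a fixed point, that is, a weak solution of \eqref{eq: mot-2}.

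For uniqueness, let $(\varphi_1,q_1)$ and $(\varphi_2,q_2)$ be two weak solutions; the regularity output of the minimization and Lax--Milgram steps places both in $\mathcal{M}^{6}_\varepsilon\times H^2([0,T]) \subset D(A)$. Subtracting the fixed-point identities and pairing with $(\varphi_1-\varphi_2,q_1-q_2)$ in $L^2\times L^2$, the monotonicity of $A$ (Proposition~\ref{mon-A}, applicable because $\alpha\in(0,2)$) renders the nonlinear contribution nonnegative, whereas the linear regularization furnishes the strictly positive quantity $\varepsilon\bigl(\Vert\varphi_1-\varphi_2\Vert_{H^6}^2 + \Vert q_1-q_2\Vert_{H^1}^2\bigr)$ after the $t=0,T$ boundary terms cancel (since $\varphi_1$ and $\varphi_2$ share the data in $\mathcal{M}^{6}_\varepsilon$); the resulting identity forces $\varphi_1 = \varphi_2$ and $q_1 = q_2$. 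The main obstacle will be the boundedness step: one must adapt the testing computation of Proposition~\ref{apri-1}, which was designed for the unregularized problem and relied on comparison with $(\varphi^0, 0)$, so as to simultaneously accommodate the scaling factor $\lambda$, the additional sixth-order regularization, and the obstacle-type variational inequality \eqref{b-weak} on the contact set.
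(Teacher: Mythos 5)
Your outline follows the same route as the paper: translate by $\varphi^0$, apply Schaefer's theorem (Theorem~\ref{Sch4-1}) via the compactness/continuity of Proposition~\ref{con-comp}, derive a $\lambda$-uniform a priori bound, and establish uniqueness by combining the monotonicity of $A$ (Proposition~\ref{mon-A}) with the strict coercivity of the $\varepsilon$-regularization. Two remarks. First, the paper applies Theorem~\ref{Sch4-1} directly on the convex closed set $\tilde{\mathcal{M}}\times H^1([0,T])$ with $\tilde{\mathcal{M}}=\mathcal{M}^5_\varepsilon-\varphi^0$; your extension of $S$ to the full linear space $\mathcal{Y}$ by truncating the denominators is unnecessary overhead, though not wrong, since the output is constrained to $\mathcal{M}^6_\varepsilon-\varphi^0$ in any case.

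Second, and more importantly, the boundedness step --- which you yourself flag as ``the main obstacle'' --- is where the paper's argument is genuinely delicate, and your sketch of ``pairing with $(\widetilde\varphi,q)$'' does not reproduce it. The constrained minimization \eqref{vpp} only gives the variational inequality \eqref{b-weak}, not the Euler--Lagrange equation globally, so you cannot test the PDE against $\widetilde\varphi$ without controlling the contact set where $\varphi^*_x+1=\varepsilon$. The paper's device is to write the fixed-point relation as $S[\varphi_\lambda,q_\lambda]=\tfrac1\lambda(\varphi_\lambda+(\lambda-1)\varphi^0,\,q_\lambda)$, set $\tilde\varphi_\lambda=\tfrac1\lambda(\varphi_\lambda+(\lambda-1)\varphi^0)$, and pick the specific test function $v=\varphi_\lambda$ in \eqref{b-weak}. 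This $v$ is admissible because $v=\lambda\tilde\varphi_\lambda+(1-\lambda)\varphi^0$ is a convex combination of two elements of $\mathcal{M}^6_\varepsilon$, and it yields $v-\tilde\varphi_\lambda = \tfrac{\lambda-1}{\lambda}(\varphi_\lambda-\varphi^0)$. Multiplying the resulting inequality by the \emph{negative} factor $\tfrac{\lambda^2}{\lambda-1}$ flips the inequality into the direction where the pairing $\bigl(A[\varphi_\lambda,q_\lambda],\,(\varphi_\lambda-\varphi^0,q_\lambda)\bigr)_{L^2\times L^2}$ appears with the right sign and can be estimated as in Proposition~\ref{apri-1}, while the regularization terms are absorbed. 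Without this specific choice of test function and the sign-flip, the claim that the one-sided inequality on the contact set has ``a favorable sign'' remains unverified; your proposal would need to supply exactly this computation to close the estimate.
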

\begin{proof} We first deal with the existence of weak solutions. Set $\tilde{\mathcal{M}}=\mathcal{M}^{5}_\varepsilon-\varphi^0$, where $\varphi^0$ is defined by \eqref{phiTest}, and consider the operator $\tilde{S}:\tilde{\mathcal{M}}\times H^1([0,T])\to \tilde{\mathcal{M}}\times H^1([0,T])$ given by
        \begin{equation}\label{S-bar}
        \tilde{S}\begin{bmatrix}
        \varphi\\
        q
        \end{bmatrix}=S\begin{bmatrix}
        \varphi+\varphi^0\\
        q
        \end{bmatrix}-\begin{bmatrix}
        \varphi^0\\
        0
        \end{bmatrix},  \quad (\varphi,q) \in \tilde{\mathcal{M}}\times H^1([0,T]),
        \end{equation}
        where, we recall, \(S\) is the operator introduced in
\eqref{def_S}. Note that proving that \eqref{eq: mot-2}
has a  weak solution in the sense of Definition~\ref{def-wee} is equivalent to proving that \(\tilde S\) has a fixed point. 

We have  \((0,0)\in
\tilde{\mathcal{M}}\times H^1([0,T])\) and, using Proposition~\ref{con-comp},
\(\tilde S\) is a continuous and compact operator on \(\tilde{\mathcal{M}}\times H^1([0,T])\). Moreover, let  $\lambda\in[0,1]$ and assume that $(\bar{\varphi}_\lambda,\bar{q}_\lambda)\in \tilde{\mathcal{M}}\times H^1([0,T])$ satisfies the identity 
        \begin{equation}\label{lin-oper}
        \begin{bmatrix}
        \bar{\varphi}_\lambda\\
        \bar{q_\lambda}
        \end{bmatrix}=\lambda \bar{S} \begin{bmatrix}
        \bar{\varphi}_\lambda\\
        \bar{q}_\lambda
        \end{bmatrix}.
        \end{equation}
        Note that by definitions of \(S\),  \(\tilde S\), and
        \(\varphi^0\), we
have \(\bar \varphi_\lambda\in H^{6}([0,T]\times \Tt)\) and
\(\bar q_\lambda\in H^2([0,T])\) by Propositions~\ref{prop:minpb} and \ref{prop:bipb}.

 If $\lambda=0$, then $(\bar{\varphi}_\lambda,\bar{q}_\lambda)=(0,0)$. If $0 <\lambda\leq 1$, we set
        \begin{equation}\label{def-phi}
        \varphi_\lambda=\bar{\varphi}_\lambda+\varphi^0\in \mathcal{M}^{6}_\varepsilon
\enspace \text{ and } \enspace q_\lambda = \bar q_\lambda \in
H^1([0,T])
        \end{equation}
        and note that \eqref{S-bar}, \eqref{lin-oper}, and \eqref{def-phi} yield the identity       \begin{equation*}
        S\begin{bmatrix}
        \varphi_\lambda\\
        q_\lambda
        \end{bmatrix}=\frac{1}{\lambda}\begin{bmatrix}
        \varphi_\lambda+(\lambda-1)\varphi^0\\
        q_\lambda
        \end{bmatrix}.
        \end{equation*}
        Using the definition of  $S$ and   setting $\tilde{\varphi}_\lambda=\frac{1}{\lambda}(\varphi_\lambda+(\lambda-1)\varphi^0)$ and  $\tilde{q}_\lambda=\frac{1}{\lambda}q_\lambda$,
we have \(\tilde{\varphi}_\lambda \in\mathcal{M}^{6}_\varepsilon\) and \(\tilde{q}_\lambda\in H^1([0,T])\) and we may use   \eqref{b-weak} and \eqref{bi}, with \((\varphi_0,q_0) = ({\varphi}_\lambda,q_\lambda)\),
 \(\varphi = \tilde \varphi_\lambda\), and \(\bar q = \tilde
 q_\lambda\),
 to conclude that        
        \begin{align}
        &\int_{0}^{T}\!\!\!\int_{\Tt}\bigg(\varepsilon \tilde{\varphi}_\lambda(v-\tilde{\varphi}_\lambda)+\epsi
\sum_{|j|=6}^{}\partial^j\tilde{\varphi}_\lambda\left( \partial^j v-\partial^j\tilde{\varphi}_\lambda\right)  +F_1(\varphi_\lambda,q_\lambda)(v-\tilde{\varphi}_\lambda )\bigg)\,\dx\dt\geq 0,\label{main-exist1}\\
        &\int_{0}^{T}\Big(\varepsilon (\tilde{q}_\lambda q+\tilde{q}_\lambda^{\prime}q^{\prime})
        +F_2(\varphi_\lambda,q_\lambda)q\Big) \,\dt=0,\label{main-exist2}
        \end{align}
        for all $v\in \mathcal{M}^{6}_\varepsilon$ and  $q\in H^1([0,T])$.
        Next, we take $v=\varphi_\lambda$  in \eqref{main-exist1} multiplied by \(\frac{\lambda^2}{\lambda-1}\) and $q=q_\lambda$ in \eqref{main-exist2}
 multiplied by $\lambda$, and observe  that
\(\varphi_\lambda - \tilde\varphi_\lambda = \tfrac{\lambda-1}{\lambda}(\varphi_\lambda
- \varphi^0)\) and \(\lambda-1\leq 0\). Then, adding the two resulting inequalities 
          and recalling the definition
of the operator  \(A\) in \eqref{def_A},
we obtain 
\begin{equation*}
\begin{aligned}
&\int_{0}^{T}\!\!\!\int_{\Tt}  \varepsilon\bigg( \varphi_\lambda^2+\sum_{|j|=6}^{}(\partial^j\varphi_\lambda)^2
+ q_\lambda^2+(q_\lambda^\prime)^2\bigg)\,\dx\dt+\left( A\begin{bmatrix}
        \varphi_\lambda \\q_\lambda     
        \end{bmatrix}, \begin{bmatrix}
        \varphi_\lambda - \varphi^0 \\q_\lambda
        \end{bmatrix}\right)_{L^2\times L^2}\\
        &\quad \leq \epsi\int_{0}^{T}\!\!\!\int_{\Tt} \bigg[
        (\lambda-1)\bigg((\varphi^0)^2+\sum_{|j|=6}^{}(\partial^j\varphi^0)^2
\bigg)-(\lambda-2)\bigg(\varphi_\lambda\varphi^0
+ \sum_{|j|=6}^{}\partial^j\varphi_\lambda \partial^j\varphi^0\bigg)
\bigg]\,\dx\dt.
 \end{aligned}
\end{equation*}

Arguing as in the proof of Proposition~\ref{apri-1}, we conclude
from the preceding estimate that         
\begin{equation} \label{key-for apriori} 
\begin{aligned}
     & \int_{0}^{T}\!\!\!\int_{\Tt}  \varepsilon\bigg(  \varphi_\lambda^2+\sum_{|j|=6}^{}(\partial^j\varphi_\lambda)^2
+ q_\lambda^2+(q_\lambda^\prime)^2\bigg)\,\dx\dt\\
&\quad + \int_{0}^{T}\!\!\!\int_{\Tt} \frac{((\varphi_\lambda)_t+q_\lambda)^2}{ ((\varphi_\lambda)_x+1)^{1-\alpha}}+\frac{((\varphi_\lambda)_t+q)^2}{ ((\varphi_\lambda)_x+1)^{2-\alpha}}+((\varphi_\lambda)_x+1)^{\mu+1}\,\dx\dt\leq C,
\end{aligned}
        \end{equation}
        where $C$ does not depend on $\lambda$. The  estimate
in \eqref{key-for apriori} and  the Gagliardo--Nirenberg
interpolation         inequality imply that  $(\varphi_{\lambda},q_\lambda)$ is  bounded in $\mathcal{M}^{6}_\varepsilon\times H^1([0,T])$
uniformly in \(\lambda\).  Therefore, recalling \eqref{def-phi}, we may apply Theorem~\ref{Sch4-1} to conclude    that $\bar{S}$ has a fixed point. Thus,    \eqref{eq: mot-2} has a weak solution
in the sense of Definition~\ref{def-wee}.

Finally, we prove the uniqueness of such weak solutions. Suppose that the operator $S$ has two fixed points, $(\varphi_1,q_1)$ and $(\varphi_2,q_2)$. Then, by Propositions~\ref{prop:minpb} and \ref{prop:bipb},
we have $(\varphi_1,q_1)$, $(\varphi_2,q_2)
\in \mathcal{M}^{6}_\varepsilon \times  H^2([0,T])$.
Moreover, setting
\begin{equation*}
\begin{aligned}
B&= \varepsilon\int_{0}^{T}\!\!\!\int_{\Tt}
\bigg(( \varphi_1- \varphi_2)^2+\sum_{|j|=6}^{} (\partial^j\varphi_1-\partial^j\varphi_2)^2\bigg)
\,\dx\dt\\&\quad+\varepsilon\int_{0}^{T} \bigg( (q_1-q_2)^2+(q_1^{\prime}-q_2^{\prime})^2\bigg)
\,\dt +\left( A\begin{bmatrix}
        \varphi_1 \\q_1
        \end{bmatrix}-A\begin{bmatrix}
        \varphi_2 \\q_2
        \end{bmatrix}, \begin{bmatrix}
        \varphi_1 \\q_1
        \end{bmatrix}-\begin{bmatrix}
        \varphi_2 \\q_2
        \end{bmatrix}\right)_{L^2\times L^2} ,
\end{aligned}
\end{equation*}
we have \(B\geq 0\) by Proposition~\ref{mon-A}.
On the other hand, we may use    \eqref{b-weak}
and \eqref{bi} twice, where %
\begin{enumerate}
\item[(i)]  \((\varphi_0,q_0) = ({\varphi}_1,q_1)\), \(\varphi=\varphi_1\),
\(v=\varphi_2\), \(\bar q=q_1\), and \(q=q_2\);

\item[(ii)]  \((\varphi_0,q_0) = ({\varphi}_2,q_2)\), \(\varphi=\varphi_2\),
\(v=\varphi_1\), \(\bar q=q_2\), and \(q=q_1\). 
\end{enumerate}
Adding the two  inequalities resulting from (i) and (ii), we
get \(-B\geq 0\). Thus, \(B\equiv 0\) that, together with Proposition~\ref{mon-A},  yields $(\varphi_1,q_1)=(\varphi_2,q_2)$.
\end{proof}

 The next  propositions provide  a priori estimates for the weak and classical solutions to \eqref{eq: mot-2}. 
\begin{proposition}\label{p2} Let $\kappa>1$
and   $0<\varepsilon\leq
k_0$.   Suppose that $\alpha< \mu+1$ and that Assumption~\ref{assumtionOnBoundsA} holds with  \(m_0,m_T \in H^{5}(\Tt)\).  If $(\varphi,q)\in  \mathcal{M}^{6}_\varepsilon\times H^1([0,T])$ solves \eqref{eq: mot-2} in the sense  of Definition~\ref{def-wee}, then there exists a positive constant, $C$, such that
        \begin{equation}\label{p2esti: 1}
        \begin{split}
        &\int_{0}^{T}\!\!\!\int_{\mathbb{T}} \varepsilon\bigg(  \varphi^2+\sum_{|j|=6}^{}(\partial^j\varphi)^2+q^2+(q^\prime)^2\bigg) \,\dx\dt\leq C,\\ 
        &       \int_{0}^{T}\!\!\!\int_{\mathbb{T}} ( \varphi_x+1)^{\mu+1}\,\dx\dt\leq C.
        \end{split}
        \end{equation}
        Moreover,
for \(\alpha\in(0,2)\), we have
\begin{align}
&\int_{0}^{T}\!\!\!\int_{\mathbb{T}}\Big(|\varphi_t|^\frac{2(\mu+1)}{\mu+2-\alpha}+|q|^\frac{2(\mu+1)}{\mu+2-\alpha}
\Big)\,\dx\dt\leq C \qquad\text{if } \alpha\in(0,1],\label{p2esti: 2}\\
& \int_{0}^{T}\!\!\!\int_{\mathbb{T}}\Big(|\varphi_t|^\frac{2(\mu+1)}{\mu+3-\alpha}+|q|^\frac{2(\mu+1)}{\mu+3-\alpha}\Big)\,\dx\dt\leq
C \qquad\text{if } \alpha\in[1,2).\label{p2esti: 3}
\end{align}
\end{proposition}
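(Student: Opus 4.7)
The plan is to follow the energy-estimate strategy of Proposition~\ref{apri-1}, adapted to the variational-inequality formulation of Definition~\ref{def-wee}, and then extract the $L^{\sigma}$-bounds on $\varphi_t$ and $q$ via the same Young-inequality argument used at the end of that proposition. The full computation essentially already appears in the derivation of \eqref{key-for apriori} within the proof of Theorem~\ref{weak-ex-teo}, but one cannot simply specialize to $\lambda=1$ there because the multiplier $\lambda^{2}/(\lambda-1)$ used in that argument is singular; hence one must redo the pairing directly for the fixed point.

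First, I would use that $(\varphi,q)$ is a fixed point of $S$, so by Propositions~\ref{prop:minpb} and~\ref{prop:bipb} one automatically has $\varphi\in H^{6}([0,T]\times\Tt)\subset C^{4}$ (by Morrey's embedding in two variables) and $q\in H^{2}([0,T])\subset C^{1}$. In particular, $(\varphi,q)$ satisfies \eqref{b-weak} with $(\varphi_{0},q_{0})=(\varphi,q)$ for every admissible test $v\in\mathcal{M}^{6}_{\varepsilon}$, and \eqref{bi} with $(\varphi_{0},q_{0})=(\varphi,q)$ and $\bar q=q$ for every test in $H^{1}([0,T])$.

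Next, I would test \eqref{b-weak} with $v=\varphi^{0}$, where $\varphi^{0}$ is the linear interpolant from \eqref{phiTest} (which lies in $\mathcal{M}^{6}_{\varepsilon}$ since $\varepsilon\leq k_0$ and $m_{0},m_{T}\in H^{5}(\Tt)$), and test \eqref{bi} with $\tilde q=q$. Rearranging so that $\int\!\!\int F_{1}(\varphi-\varphi^{0})$ and $\int F_{2}\,q$ carry consistent signs, then expanding $F_{1},F_{2}$ via \eqref{def-F1F2} and integrating by parts in $t$ and $x$ as in \eqref{eq-equal1}--\eqref{eq-equal2}—noting that $\varphi=\varphi^{0}$ on $\{0,T\}\times\Tt$ kills the $t$-boundary terms—the cross-terms regroup (by the monotonicity mechanism of Proposition~\ref{mon-A}) into the positive combination
\begin{equation*}
\tfrac{1}{2}\frac{(\varphi_t+q)^{2}}{(\varphi_x+1)^{1-\alpha}}+\tfrac{1}{2}\frac{(\varphi_t+q)^{2}}{(\varphi_x+1)^{2-\alpha}}(\varphi^{0}_x+1)+(\varphi_x+1)^{\mu+1},
\end{equation*}
exactly as in the derivation of \eqref{key-for apriori}. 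The remaining contributions involve only derivatives of $\varphi^{0}$ plus the nonnegative quadratic $\varepsilon(\varphi^{2}+\sum_{|j|=6}(\partial^{j}\varphi)^{2}+q^{2}+(q')^{2})$ from the regularization; absorbing them using the uniform bounds \eqref{eq:bddffixt} on $\varphi^{0}$, the hypothesis $\alpha<\mu+1$, and Young's inequality yields both estimates in \eqref{p2esti: 1}.

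Finally, for \eqref{p2esti: 2} and \eqref{p2esti: 3}, I would copy the Young-inequality splitting from the last paragraph of Proposition~\ref{apri-1}: with $\gamma=\mu+1$ and $\beta=2-\alpha$ for $\alpha\in(0,1]$, or $\beta=3-\alpha$ for $\alpha\in[1,2)$, and $\sigma=\beta\gamma/(\beta+\gamma-1)$ equal to $2(\mu+1)/(\mu+2-\alpha)$ or $2(\mu+1)/(\mu+3-\alpha)$ respectively, apply Young with exponents $(\beta+\gamma-1)/\gamma$ and $(\beta+\gamma-1)/(\beta-1)$ to control $\int\!\!\int|\varphi_t+q|^{\sigma}$ by the terms $\int\!\!\int(\varphi_t+q)^{2}/(\varphi_x+1)^{\beta-1}$ and $\int\!\!\int(\varphi_x+1)^{\gamma}$ already bounded. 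The average constraint $\int_\Tt \varphi\,\dx\equiv 0$ built into $\mathcal{M}^{6}_{\varepsilon}$ gives $\int_\Tt \varphi_t\,\dx\equiv 0$, so Jensen's inequality (as in \eqref{qLnorm}) converts the $|\varphi_t+q|^{\sigma}$-bound into one on $|q|^{\sigma}$, and then on $|\varphi_t|^{\sigma}$ by the triangle inequality. I expect no substantive obstacle: every ingredient is already developed in the paper, and the only delicate point is verifying the signs and boundary contributions in the integration by parts of the second step, which is secured by the boundary conditions \eqref{eq: mot-2-bound} together with the fact that $\varphi=\varphi^{0}$ on $\{0,T\}\times\Tt$.
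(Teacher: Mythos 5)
Your proposal matches the paper's proof, which simply says to combine the derivation of \eqref{key-for apriori} with the arguments of Proposition~\ref{apri-1}: you spell out exactly this, by testing \eqref{b-weak} with $v=\varphi^{0}$ and \eqref{bi} with $q$ at the fixed point of $S$ (a clean way to avoid the formally singular multiplier $\lambda^{2}/(\lambda-1)$ at $\lambda=1$), then integrating by parts as in \eqref{eq-equal1}--\eqref{eq-equal2} and applying the Young-inequality splitting of Proposition~\ref{apri-1}. One minor algebraic slip that does not affect the argument: since the bounded quantity is $(\varphi_t+q)^{2}/(\varphi_x+1)^{\beta-1}$ with numerator power fixed at $2$ (not $\beta$ as in Proposition~\ref{ex1}), the correct formula is $\sigma=2\gamma/(\beta+\gamma-1)$ rather than $\beta\gamma/(\beta+\gamma-1)$; with $\gamma=\mu+1$ and $\beta=2-\alpha$ (resp.\ $3-\alpha$) this gives precisely the values $\frac{2(\mu+1)}{\mu+2-\alpha}$ (resp.\ $\frac{2(\mu+1)}{\mu+3-\alpha}$) you state, so the conclusion is unaffected.
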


\begin{proof} Combining the arguments used to establish  \eqref{key-for apriori} and  Proposition \ref{apri-1}, we deduce the result.
\end{proof}
\begin{cor}\label{lim} Under the assumptions of Theorem~\ref{weak-ex-teo},
let $(\varphi_\varepsilon,q_\varepsilon)\in \mathcal{M}^{6}_\varepsilon\times H^1([0,T])$ be the weak solution of \eqref{eq: mot-2} in the sense of Definition \ref{def-wee}. Set
\begin{equation*}
\begin{aligned}
\kappa=\begin{cases}
\min\big\{\frac{2(\mu+1)}{\mu+2-\alpha},\mu+1\big\} & \text{if } \alpha\in(0,1],\\
\frac{2(\mu+1)}{\mu+3-\alpha} & \text{if } \alpha\in(1,2),
\end{cases}
\end{aligned}
\end{equation*}
and recall the set \(\mathcal{A}_0^\kappa\) introduced in \eqref{def-A_0}.
Then,   
there exist a subsequence, \(\{(\varphi_{\varepsilon_n},q_{\varepsilon_n})\}_{n=1}^\infty\), and a pair  $(\varphi,q)\in \mathcal{A}_0^{\kappa}\times L^\kappa([0,T])$ such that $(\varphi_{\varepsilon_n},q_{\varepsilon_n})\rightharpoonup(\varphi,q)$ in $W^{1,\kappa}([0,T]\times\Tt)\times L^\kappa([0,T])$.
\end{cor}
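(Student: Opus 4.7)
The plan is to apply a standard weak compactness argument based on the $\varepsilon$-independent a priori bounds from Proposition~\ref{p2}. First, I would verify that the constant $C$ in \eqref{p2esti: 1}--\eqref{p2esti: 3} is indeed independent of $\varepsilon\in(0,k_0]$. Tracing through the proofs of Proposition~\ref{apri-1} and the argument leading to \eqref{key-for apriori}, the constants depend only on $\alpha$, $\mu$, $T$, $k_0$, and $k_1$ (via the bounds on $\varphi^0$ in \eqref{eq:bddffixt}), all of which are $\varepsilon$-independent provided $\varepsilon\leq k_0$. Thus, the bounds on $\int\!\!\int(\varphi_{\varepsilon,x}+1)^{\mu+1}$ and on $\int\!\!\int(|\varphi_{\varepsilon,t}|^{\sigma}+|q_\varepsilon|^{\sigma})$ (with $\sigma=\tfrac{2(\mu+1)}{\mu+2-\alpha}$ or $\sigma=\tfrac{2(\mu+1)}{\mu+3-\alpha}$ according to $\alpha$) hold uniformly in $\varepsilon$.

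Second, because $\kappa\leq \mu+1$ and $\kappa\leq \sigma$ by the definition of $\kappa$ in the statement, Hölder's inequality on the bounded domain $[0,T]\times\Tt$ yields uniform bounds
\[
\sup_{\varepsilon}\bigl(\|\varphi_{\varepsilon,x}+1\|_{L^{\kappa}} + \|\varphi_{\varepsilon,t}\|_{L^{\kappa}} + \|q_\varepsilon\|_{L^{\kappa}([0,T])}\bigr) <+\infty.
\]
Since $\int_{\Tt}\varphi_\varepsilon(t,\cdot)\,\dx=0$ (this is built into $\mathcal{M}_\varepsilon^6$), the Poincaré--Wirtinger inequality applied slicewise, together with the bound on $\varphi_{\varepsilon,t}$, gives $\sup_\varepsilon \|\varphi_\varepsilon\|_{W^{1,\kappa}([0,T]\times\Tt)}<+\infty$. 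By reflexivity of $W^{1,\kappa}$ and $L^\kappa$ for $\kappa>1$, I can extract a subsequence $\{(\varphi_{\varepsilon_n},q_{\varepsilon_n})\}$ such that $\varphi_{\varepsilon_n}\rightharpoonup\varphi$ in $W^{1,\kappa}([0,T]\times\Tt)$ and $q_{\varepsilon_n}\rightharpoonup q$ in $L^\kappa([0,T])$ for some limit pair $(\varphi,q)$.

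The remaining step, and the main (though routine) technical point, is to verify that $\varphi\in\mathcal{A}_0^\kappa$. For the sign constraint, each $\varphi_{\varepsilon_n,x}+1\geq\varepsilon_n>0$, so in particular $\varphi_{\varepsilon_n,x}+1\geq 0$; since the convex cone $\{f\in L^\kappa : f\geq 0\text{ a.e.}\}$ is strongly closed and convex, it is weakly closed, so the weak limit satisfies $\varphi_x+1\geq 0$. The zero-average condition $\int_\Tt\varphi(t,\cdot)\,\dx=0$ passes to the limit by testing against $t$-dependent functions in the weak convergence, exactly as in the proof of Proposition~\ref{ex1}. For the boundary conditions at $t=0$ and $t=T$, I would invoke the continuity of the trace operator from $W^{1,\kappa}([0,T]\times\Tt)$ to $L^\kappa(\{0,T\}\times\Tt)$, which is weak-to-weak continuous, so the traces of $\varphi_{\varepsilon_n}$ converge weakly in $L^\kappa(\Tt)$ to the trace of $\varphi$; since each $\varphi_{\varepsilon_n}$ satisfies the boundary conditions encoded in $\mathcal{M}_{\varepsilon_n}^6$, the limit satisfies the same conditions. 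This yields $(\varphi,q)\in\mathcal{A}_0^\kappa\times L^\kappa([0,T])$, completing the proof.
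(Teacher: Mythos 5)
Your proof is correct and follows essentially the same route as the paper: uniform $\varepsilon$-independent a priori bounds from Proposition~\ref{p2} plus Poincar\'e--Wirtinger to get boundedness in $W^{1,\kappa}\times L^\kappa$, then reflexive weak compactness, then verification that the limit lies in $\mathcal{A}_0^\kappa$. The only cosmetic difference is that the paper dispatches the last step by simply observing that $\mathcal{A}_0^\kappa$ is a closed convex (hence weakly closed) subset of $W^{1,\kappa}([0,T]\times\Tt)$ and that $\mathcal{M}^6_\varepsilon\subset\mathcal{A}_0^\kappa$, whereas you verify the individual defining constraints (nonnegativity, zero average, boundary traces) one by one -- which is exactly the content of that observation.
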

\begin{proof} We first note that \(\kappa>1\), $\mathcal{M}^{6}_\varepsilon\subset\mathcal{A}_0^{\kappa}$, and  $\mathcal{A}_0^{\kappa}$ is a closed and convex subset of $W^{1,\kappa}([0,T]\times\Tt)$. Thus, the conclusion follows from the Poincar\'e--Wirtinger inequality together with the  a priori estimates in \eqref{p2esti: 1}--\eqref{p2esti: 3}.   
\end{proof}
\begin{pro}\label{p2-cl} Assume that \(\alpha\in (0,2)\) satisfies  $\alpha<\mu+1$ and  that Assumption~\ref{assumtionOnBoundsA} holds. If  $(\varphi,q)\in  C^{12}([0,T])\times (C^2([0,T])\cap H^1_0([0,T]))$ solves \eqref{eq: mot-2}--\eqref{eq: mot-2-bound} in the classical sense, then there exists a positive constant, $C$, for which the estimates \eqref{p2esti: 1}, \eqref{p2esti: 2}, and \eqref{p2esti: 3} hold.
\end{pro}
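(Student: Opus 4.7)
The plan is to reduce the proof to the energy identity underlying \eqref{key-for apriori} from the proof of Theorem~\ref{weak-ex-teo}, now applied directly to a classical solution rather than to a fixed-point approximation. Concretely, I would test the first equation in \eqref{eq: mot-2} against $\varphi-\varphi^0$, where $\varphi^0$ is the interpolating function from \eqref{phiTest}, and test the second equation against $q$. Integrating over $[0,T]\times\Tt$ and $[0,T]$, respectively, and integrating by parts, the boundary conditions in \eqref{eq: mot-2-bound} are designed precisely to kill every surface term: the vanishing of the linear combinations $\sum_{k=i}^{6}\partial_t^{2k-i}\partial_x^{2(6-k)}\varphi$ at $t\in\{0,T\}$ for $i=2,\dots,6$, together with the identity $\varphi=\varphi^0$ on $\{0,T\}\times\Tt$, handles the pairings $\partial^{2j}\varphi\,(\varphi-\varphi^0)$ for $|j|=6$; periodicity absorbs all spatial boundary contributions; and $q'(0)=q'(T)=0$ removes the temporal boundary terms from the $q$-equation.

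These manipulations yield the identity
\begin{align*}
& \int_0^T\!\!\!\int_\Tt \varepsilon\Bigl( \varphi(\varphi-\varphi^0)+\sum_{|j|=6}\partial^j\varphi\,\partial^j(\varphi-\varphi^0)\Bigr)\,\dx\dt + \int_0^T \varepsilon\bigl(q^2+(q')^2\bigr)\,\dt \\
& \qquad +\left(A\begin{bmatrix}\varphi\\q\end{bmatrix},\begin{bmatrix}\varphi-\varphi^0\\q\end{bmatrix}\right)_{L^2\times L^2}=0,
\end{align*}
which is the classical-solution analogue of the identity derived in the course of establishing \eqref{key-for apriori}. To bound the last term from below I would reuse the calculation from Proposition~\ref{apri-1}: expanding $F_1$ and $F_2$, integrating by parts in $(t,x)$, and invoking Assumption~\ref{assumtionOnBoundsA} with the uniform bounds \eqref{eq:bddffixt} on $\varphi^0_t$ and $\varphi^0_x+1$, I obtain, modulo bounded lower-order terms, control of
\begin{equation*}
\int_0^T\!\!\!\int_\Tt \Bigl(\tfrac12 \tfrac{(\varphi_t+q)^2}{(\varphi_x+1)^{1-\alpha}}+\tfrac{k_0}{2}\tfrac{(\varphi_t+q)^2}{(\varphi_x+1)^{2-\alpha}}+(\varphi_x+1)^{\mu+1}\Bigr)\,\dx\dt.
\end{equation*}
Young's inequality together with the assumption $\alpha<\mu+1$ allows me to absorb the remaining mixed terms and conclude both estimates in \eqref{p2esti: 1}.

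To pass from the weighted bound on $\varphi_t+q$ to the unweighted $L^\sigma$ bounds on $\varphi_t$ and $q$ required in \eqref{p2esti: 2}--\eqref{p2esti: 3}, I would repeat the exponent computation used at the end of the proof of Proposition~\ref{ex1} (see \eqref{seqboundphit}--\eqref{phitrnorm}), splitting into $\alpha\in(0,1]$ with $\beta=2-\alpha>1$ and $\alpha\in[1,2)$ with $\beta=3-\alpha>1$: Young's inequality combined with the $L^{\mu+1}$ bound on $\varphi_x+1$ yields the exponents $\tfrac{2(\mu+1)}{\mu+2-\alpha}$ and $\tfrac{2(\mu+1)}{\mu+3-\alpha}$, respectively. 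Since $q$ depends only on $t$ and $\int_\Tt \varphi_t\,\dx=0$ (the differentiated version of $\int_\Tt \varphi\,\dx=0$), Jensen's inequality as in \eqref{qLnorm} upgrades the joint bound on $\varphi_t+q$ to separate $L^\sigma$ bounds on $q$ and $\varphi_t$.

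The main obstacle is the careful bookkeeping of the integration-by-parts step: one has to verify that each of the surface integrals produced when moving the $2|j|$ derivatives off of $\partial^{2j}\varphi$ onto $\varphi-\varphi^0$ is killed either by the vanishing trace of $\varphi-\varphi^0$ on $\{0,T\}\times\Tt$ or by exactly one of the conditions indexed by $i=2,\dots,6$ in \eqref{eq: mot-2-bound}—the whole reason this specific family of boundary conditions was chosen in the construction of the variational problem \eqref{funct-vp}. Once the energy identity is in place, the rest of the argument is a direct amalgamation of the classical calculation of Proposition~\ref{apri-1} with the extra $\varepsilon$-coercive terms coming from the regularization, exactly as carried out in Proposition~\ref{p2} at the level of weak solutions.
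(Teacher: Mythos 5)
Your proposal matches the paper's own proof exactly: multiply the first equation in \eqref{eq: mot-2} by $\varphi-\varphi^0$ and the second by $q$, integrate, use the boundary conditions \eqref{eq: mot-2-bound} to kill the surface terms from integration by parts, and then run the coercivity computation from Proposition~\ref{apri-1} together with the $\varepsilon$-absorption via Young's inequality and the exponent bookkeeping from Proposition~\ref{ex1}. The paper states this in three sentences; you have filled in the same steps correctly, including the correct final exponents $\tfrac{2(\mu+1)}{\mu+2-\alpha}$ and $\tfrac{2(\mu+1)}{\mu+3-\alpha}$.
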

\begin{proof} We start by adding   the first equation  in \eqref{eq: mot-2}  multiplied by  $(\varphi-\varphi^0)$  to the second equation multiplied by  $q$. Then, we integrate the resulting
identity over \([0,T]\times
\Tt\).  Using the boundary conditions \eqref{eq: mot-2-bound}, an integration by parts,  and  arguments similar to those of the proof of Proposition \ref{apri-1}, we conclude the proof.
\end{proof}

We are now able to  prove Theorem \ref{con-ex}.
\begin{proof} [Proof of Theorem \ref{con-ex}]

For each  $0<\varepsilon\leq
k_0$,  let $(\varphi_\varepsilon,q_\varepsilon)\in \mathcal{M}^{6}_\varepsilon\times
H^1([0,T])$ be the weak solution of \eqref{eq: mot-2} in the
sense of Definition \ref{def-wee}, given by Theorem~\ref{weak-ex-teo}.
Moreover,  let \(\{(\varphi_{\varepsilon_n},q_{\varepsilon_n})\}_{n=1}^\infty\)
and   $(\varphi,q)\in \mathcal{A}_0^{\kappa}\times L^\kappa([0,T])$
be  given by Proposition~\ref{p2-cl}, so that $(\varphi_{\varepsilon_n},q_{\varepsilon_n})\rightharpoonup(\varphi,q)$
in $W^{1,\kappa}([0,T]\times\Tt)\times L^\kappa([0,T])$.

Fix
 $(\psi,\varpi)\in D(A)$ with \(\psi\in H^6([0,T]\times\Tt)\). \ By definition of \(D(A)\),  there exists \(\delta>0\) such that \(\psi_x+1\geq
\delta\). Then, because \(\lim_{n\to\infty}\epsi_n=0\), we have
\(\psi\in \mathcal{M}^{6}_{\varepsilon_n}\) for all \(n\in\Nn\)
sufficiently large; without loss of generality, we may assume
that this inclusion holds for all \(n\in\Nn\).
 
Using monotonicity of \(A\) proved in Proposition~\ref{mon-A}, we have 
\begin{equation}\label{eq:monAepsi}
\begin{aligned}
0&\leq \varepsilon_n\int_{0}^{T}\!\!\!\int_{\Tt}
\bigg(( \psi- \varphi_{\epsi_n})^2+\sum_{|j|=6}^{} (\partial^j\psi-\partial^j\varphi_{\epsi_n})^2\bigg)
\,\dx\dt\\&\quad+\varepsilon_n\int_{0}^{T} \big( (\varpi-q_{\epsi_n})^2+(\varpi^{\prime}-q_{\epsi_n}^{\prime})^2\big)
\,\dt +\left( A\begin{bmatrix}
        \psi \\\varpi
        \end{bmatrix}-A\begin{bmatrix}
        \varphi_{\epsi_n} \\q_{\epsi_n}
        \end{bmatrix}, \begin{bmatrix}
        \psi \\\varpi
        \end{bmatrix}-\begin{bmatrix}
        \varphi_{\epsi_n} \\q_{\epsi_n}
        \end{bmatrix}\right)_{L^2\times L^2} \\
        &= a_{\epsi_n}^1 - a_{\epsi_n}^2 -a_{\epsi_n}^3 +\left( A\begin{bmatrix}
        \psi \\\varpi
        \end{bmatrix}, \begin{bmatrix}
        \psi \\\varpi
        \end{bmatrix}-\begin{bmatrix}
        \varphi_{\epsi_n} \\q_{\epsi_n}
        \end{bmatrix}\right)_{L^2\times L^2} ,
\end{aligned}
\end{equation}
where 
        \begin{equation*}
        \begin{aligned}
        a_{\epsi_n}^1&=\varepsilon_n \int_{0}^{T}\!\!\!\int_{\Tt}\bigg(
\psi(\psi-\varphi_{\epsi_n})+\sum_{|j|=6}^{}\partial^j\psi\big(
\partial^j\psi-\partial^j \varphi_{\epsi_n}\big) \bigg)\,\dx\dt\\&\quad+\varepsilon_n
\int_{0}^{T}\Big(
\varpi (\varpi-q_{\epsi_n})+\varpi^{\prime}(\varpi^{\prime}-
q_{\epsi_n}^{\prime})
        \Big)\,\dt, \\
  a_{\epsi_n}^2&= \epsi_n \int_{0}^{T}\!\!\!\int_{\Tt} \bigg(
\varphi_{\epsi_n}(\psi-\varphi_{\epsi_n})+\sum_{|j|=6}^{}\partial^j\varphi_{\epsi_n}\big(
\partial^j\psi-\partial^j\varphi_{\epsi_n}\big) \bigg)  \,\dx\dt
\\&\quad +\int_{0}^{T}\!\!\!\int_{\Tt}F_1(\varphi_{\epsi_n},q_{\epsi_n})(\psi-\varphi_{\epsi_n})\,\dx\dt,    \\
a_{\epsi_n}^3&=\epsi_n\int_{0}^{T}\big( (q_{\varepsilon_n} (\varpi-q_{\varepsilon_n})+q_{\varepsilon_n}^{\prime}(\varpi^{\prime}-q_{\varepsilon_n}^{\prime})\big)
        \, \dt+\int_{0}^{T}F_2(\varphi_{\varepsilon_n},q_{\varepsilon_n})(\varpi-q_{\varepsilon_n})\,\dt.   
        \end{aligned}
        \end{equation*}
By     \eqref{b-weak}
and \eqref{bi}, we have \(a_{\epsi_n}^2 \geq 0\) and \(a_{\epsi_n}^3=0\).  Thus, letting \(n\to\infty\) in \eqref{eq:monAepsi},  using the convergence
 $(\varphi_{\varepsilon_n},q_{\varepsilon_n})\rightharpoonup(\varphi,q)$
in $W^{1,\kappa}([0,T]\times\Tt)\times L^\kappa([0,T])$, and
observing that \(A
        [\psi,\varpi]\in C([0,T]\times\Tt)\times C([0,T])\) because   $(\psi,\varpi)\in D(A)$,           we conclude
that %
\begin{equation*}
\begin{aligned}
0&\leq \lim_{n\to\infty} \bigg[ a_{\epsi_n}^1 +\left(
A\begin{bmatrix}
        \psi \\\varpi
        \end{bmatrix}, \begin{bmatrix}
        \psi \\\varpi
        \end{bmatrix}-\begin{bmatrix}
        \varphi_{\epsi_n} \\q_{\epsi_n}
        \end{bmatrix}\right)_{L^2\times L^2} \bigg] \\&= 
      \int_{0}^{T}\!\!\!\int_{\Tt} F_1(\psi,\varpi)(\psi-\varphi)\,\dx\dt
+\int_{0}^{T}
F_2(\psi,\varpi)(\varpi-q)\dt\\ &= \left\langle A\begin{bmatrix}
        \psi \\ \varpi
        \end{bmatrix}
        , \begin{bmatrix}
        \psi \\ \varpi
        \end{bmatrix}-\begin{bmatrix}
        \varphi \\q
        \end{bmatrix}\right\rangle_{L^{\kappa'}\times L^{\kappa'},
L^\kappa \times L^k}. 
\end{aligned}
\end{equation*}
Using a density argument together with the Lebesgue dominated
convergence theorem, we conclude that the preceding estimate holds
for all 
 $(\psi,\varpi)\in D(A)$, which proves that \((\varphi,q)\) is a weak solution to \eqref{opA: 0}
in the sense of Definition~\ref{def-weak}.
\end{proof} 

Next, we prove Proposition \ref{prorelc}; that is,  if the weak solution (in the sense of Definition~\ref{def-weak}) to \eqref{opA: 0} is smooth enough, then we can construct a classical solution to \eqref{planningCS}.
\begin{proof}[Proof of Proposition \ref{prorelc}]
        
Assume that $(\varphi,q)\in C^{2}([0,T]\times \Tt)\times C^{1}([0,T])$
is a weak
solution to \eqref{opA: 0}
in the sense of Definition~\ref{def-weak} such that \(\varphi_x+1
>0\). In particular, $(\varphi,q)\in D(A)$ and there exists \(\delta>0\) such
\begin{equation}\label{eq:ffi+}
\begin{aligned}
\varphi_x +\ 1 >\delta.
\end{aligned}
\end{equation}
Moreover, for
all $(\psi,\varpi)\in D(A)$, we   have
\eqref{def--weak-sol}.    
Using Morrey's embedding  theorem
as before, it follows that 
 \(A[\psi,\varpi]\in C([0,T]\times\Tt)\times C([0,T])\) and
\eqref{def--weak-sol} can be rewritten as
\begin{equation}
\label{eq:regws}
\begin{aligned} 
      \int_{0}^{T}\!\!\!\int_{\Tt} F_1(\psi,\varpi)(\psi-\varphi)\,\dx\dt
+\int_{0}^{T}
F_2(\psi,\varpi)(\varpi-q)\,\dt\geq 0.
\end{aligned}
\end{equation}

Fix \(\tilde \psi \in C^\infty_c((0,T)\times\Tt))\), with \(\int_\Tt
\tilde\psi\, \dx = 0\), and \(\tilde q\in C^{1}_0((0,T)) \). Then, using \eqref{eq:ffi+}, we can use \eqref{eq:regws} with
\begin{equation*}
\begin{aligned}
\psi = \tau\tilde\psi +\varphi \quad \text{ and } \quad \varpi
= \tau \tilde q + q
\end{aligned}
\end{equation*}
for all \(\tau>0\) with \(|\tau|\)
sufficiently small.
Hence,%
\begin{equation}
\label{eq:regws2}
\begin{aligned} 
      \int_{0}^{T}\!\!\!\int_{\Tt} F_1( \tau\tilde\psi +\varphi,\tau \tilde q + q)\tilde
\psi\,\dx\dt
+\int_{0}^{T}
F_2( \tau\tilde\psi +\varphi,\varpi)\tilde q\,\dt\geq 0
\end{aligned}
\end{equation}
for all such \(\tau\). Letting \(\tau\to0\) in \eqref{eq:regws2} and using the Lebesgue
dominated convergence theorem, the arbitrariness of \((\tilde\psi,
\tilde q)\) yields  $F_1
(\varphi, q)=0$ and   $F_2(\varphi, q)=0$; that is, $(\varphi,q)$ is a classical solution to \eqref{opA: 0}. Therefore, by Proposition~\ref{prorelc1}, we conclude the proof.
\end{proof}

        \section{Some Explicit  Examples  Of  The Hughes'  Model}
\label{sec4}
In this section, we present an application of our approach to the one-dimensional  Hughes' model. This model describes the evolution of a population with density $\rho$. This evolution is determined by the  system
\begin{equation}\label{Hu1}
\begin{cases} 
-\rho_t+ (\rho f^2(\rho)\Psi_x)_x =0    \\ f(\rho)|\Psi_x|=1 \\\rho(0,x)=\rho_0(x).
\end{cases}
\end{equation}
 Using a potential to interpret the first equation in \eqref{Hu1}, we deduce that there exists a function, $\varphi$, such that
\begin{equation}\label{rho}
\begin{cases}
\rho=\varphi_x\\
\rho f^2(\rho)\Psi_x=\varphi_t.
\end{cases}
\end{equation} 
Because $x$ is scalar, the second equation in \eqref{Hu1} is equivalent to
\begin{equation*}
 (f(\rho)\Psi_x)^2=1.
\end{equation*}
Then,  using \eqref{rho} and taking into consideration the initial condition for $\rho$, we get
\begin{equation}\label{newHu}
\begin{cases}
\varphi_t^2=\varphi_x^2f^2(\varphi_x)\\
\varphi(0,x)=\int_{-\infty}^{x}\rho_0(\tau)\,\d\tau.
\end{cases}
\end{equation}
Next,  we explicitly solve \eqref{newHu} for a  particular $f$.
\begin{example}\label{ex} Suppose that $\rho_0\in C(\Rr)$, which implies that $\varphi(0,\cdot)$ is  Lipschitz continuous in $\Rr$. The original Hughes' model is given by \eqref{Hu1} with $f(p)=1-p$. 
First, we consider the increasing quaintly case; that is, instead of \eqref{newHu}, we have
\begin{equation}\label{newHuex1}
\begin{cases}
\varphi_t-\varphi_xf(\varphi_x)=0\\
\varphi(0,x)=\int_{-\infty}^{x}\rho_0(\tau)\,\d\tau.
\end{cases}
\end{equation}
Notice that \eqref{newHuex1} is a Hamilton--Jacobi equation with the Hamiltonian $H(p)=-pf(p)=p^2-p$.
Because $H$ is convex and $\varphi(0,\cdot)$ is  Lipschitz, we
may use the Hopf-Lax formula to  solve \eqref{newHuex1}
explicitly:\begin{equation}\label{sol1}
\varphi(t,x)=\min\limits_{y}\left\lbrace tL\left( \frac{x-y}{t}\right) + \int_{-\infty}^{y}\rho_0(\tau)\,\d\tau\right\rbrace,
\end{equation}
where $L$ is Legendre transform of $H$. 

For the decreasing quaintly case, we have
\begin{equation}\label{newHuex2}
\begin{cases}
\varphi_t+\varphi_xf(\varphi_x)=0\\
\varphi(0,x)=\int_{-\infty}^{x}\rho_0(\tau)\,\d\tau.
\end{cases}
\end{equation}
 In this case, we have a Hamilton--Jacobi equation with a concave Hamiltonian, $H(p)=pf(p)=-p^2+p$. Using the Hopf-Lax formula once
more, we explicitly solve \eqref{newHuex2}:
\begin{equation}\label{sol2}
\varphi(t,x)=\max\limits_{y}\left\lbrace tL\left( \frac{x-y}{t}\right) + \int_{-\infty}^{y}\rho_0(\tau)\,\d\tau\right\rbrace,
\end{equation}
where $L$ is Legendre transform of $H$ and the maximum in \eqref{sol1} is attained because $\varphi(0,\cdot)$ is Lipschitz and $L$ is concave.

Note that \eqref{sol1} also holds when $\rho_0$ is an increasing function, and \eqref{sol2} also holds when $\rho_0$ is a decreasing function.
\end{example}
\begin{remark} The arguments of Example \ref{ex} also hold for the congestion case; that is, we have \eqref{newHuex1} with $f(p)=\frac{k_1}{(k_2p)^\beta}$,
where $0<\beta<1/2$ and  $ k_1$, $k_2>0$.
\end{remark}

\def\cprime{$'$}

\end{document}